 \makeatletter \@addtoreset{equation}{section}
\newtheorem{thm}{Theorem}[section]
\newtheorem{hyp}[thm]{Hypotheses}{\rm}
{\rm}
\newtheorem{lemm}[thm]{Lemma}
\newtheorem{coro}[thm]{Corollary}
\newtheorem{prop}[thm]{Proposition}
\newtheorem{defi}[thm]{Definition}
\newtheorem{rmk}[thm]{Remark}{\rm}
\newcommand{\R}{{\mathbb R}}
\newcommand{\N}{{\mathbb N}}
\newcommand{\Rd}{\mathbb R^d}
\newcommand{\Om}{{\Omega}}
\newcommand{\A}{\mathcal{A}}
\newcommand{\bd}{\begin{defi}}
\newcommand{\ed}{\end{defi}}
\newcommand{\be}{\begin{equation}}
\newcommand{\ee}{\end{equation}}
\newcommand{\barr}{\begin{array}}
\newcommand{\earr}{\end{array}}
\newcommand{\bmn}{\begin{eqnarray}}
\newcommand{\emn}{\end{eqnarray}}
\newcommand{\bnm}{\begin{eqnarray*}}
\newcommand{\enm}{\end{eqnarray*}}
\newcommand{\bln}{\begin{subequations}}
\newcommand{\eln}{\end{subequations}}
\newcommand{\ba}{\begin{align}}
\newcommand{\ea}{\end{align}}
\newcommand{\banm}{\begin{align*}}
\newcommand{\eanm}{\end{align*}}
\newcommand{\one}{\mbox{$1\!\!\!\;\mathrm{l}$}}
\title[On the Dirichlet and Neumann evolution operators in $\R^d_+$]{On the Dirichlet and Neumann evolution operators in $\R^d_+$}
\author[L. Angiuli]{Luciana Angiuli}
\author[L. Lorenzi]{Luca Lorenzi}
\address{Dipartimento di Matematica e Informatica, Universit\`a degli Studi di Parma, Parco Area delle Scienze 53/A, I-43124 Parma, Italy.}
\email{luciana.angiuli@unipr.it}
\email{luca.lorenzi@unipr.it}
\keywords{Nonautonomous second-order elliptic
operators, unbounded coefficients, evolution operators, pointwise and uniform gradient estimates, evolution systems of measures, asymptotic behaviour}
\subjclass[2000]{35K10, 35K15, 35B40, 37L40}
\date{}
\begin{document}

\begin{abstract}
We prove some uniform and pointwise gradient estimates for the Dirichlet and the Neumann evolution operators $G_{\mathcal{D}}(t,s)$ and $G_{\mathcal{N}}(t,s)$ associated with a class of nonautonomous elliptic operators $\A(t)$ with unbounded coefficients defined in $I\times \Rd_+$ (where $I$ is a right-halfline or $I=\R$).
We also prove the existence and the uniqueness of a tight evolution system of measures $\{\mu_t^{\mathcal{N}}\}_{t \in I}$ associated with $G_{\mathcal{N}}(t,s)$, which turns out to be sub-invariant for $G_{\mathcal{D}}(t,s)$, and we study the asymptotic behaviour of the evolution operators $G_{\mathcal{D}}(t,s)$ and $G_{\mathcal{N}}(t,s)$ in the $L^p$-spaces related to the system $\{\mu_t^{\mathcal{N}}\}_{t \in I}$.
\end{abstract}

\maketitle

\section{Introduction}

The increasing interest in Kolmogorov equations is due to their relevant role in many branches of mathematics. In particular, these equations arise in a natural way from many applications in physics.
For example in some free boundary problems in combustion theory and in the study of the Navier-Stokes equations in rotating exterior domains, simple changes of variables transform operators with bounded coefficients into operators with unbounded coefficients. Kolmogorov equations are also strongly connected to the study of many problems in dynamic population and in mathematical finance that lead to stochastic models where it is quite natural to require that the unbounded coefficients be explicitly depending on time. Whereas the theory is already well developed in the autonomous case (see e.g., \cite{BerFor04Grad, BerForLor07Poi, BerForLor07Gra, ForMetPri04Gra,feller} and the monograph \cite{BerLorbook}), in the nonautonomous case, some results have been proved
very recently and a lot of significant problems are still open.
To the best of our knowledge, all the literature in the nonautonomous setting is related to the case of the whole space $\Rd$. In such a case, many aspects of the Cauchy problem for nonautonomous parabolic equations have been studied in \cite{AngLor12OnI,AngLor10Com,AngLorLun,GeisLun08, GeisLun09, KunLorLun09Non,LorZam09Cor,LorLunZam}.

This paper represents the first step to understand and analyze nonautonomous elliptic operators (and their associated evolution operators) in unbounded domains with homogeneous boundary conditions.
Given a right halfline $I$ (possibly $I=\R$),
we consider a class of linear nonautonomous second-order uniformly elliptic operators
\begin{align*}
\A(t)=\sum_{i,j=1}^d q_{ij}(t,\cdot)D_{ij}+\sum_{i=1}^d b_i(t,\cdot) D_i-c(t,\cdot),
\end{align*}
with sufficiently smooth and possibly unbounded coefficients defined in $I\times \Rd_+$, where $\Rd_+:=\R^{d-1}\times
(0,+\infty)$. Under suitable assumptions on the coefficients of the operator $\A(t)$, for any $s\in I$ the
Cauchy-Dirichlet problem
\begin{equation}\label{NADir}
\left\{
\begin{array}{ll}
u_t(t,x)=\A(t)u(t,x), &t\in(s,+\infty),\,x\in \R^d_+\\[1mm]
u(t,x)=0,&t\in(s,+\infty),\,x\in \partial\R^d_+,\\[1mm]
u(s,x)= f(x),& x\in \R^d_+,
\end{array}
\right.
\tag{$P_{\mathcal D}$}
\end{equation}
with $f\in C_b(\Rd_+)$, and the Cauchy-Neumann problem
\begin{equation}
\label{NANeu}
\left\{
\begin{array}{ll}
u_t(t,x)=\A(t)u(t,x), &t\in(s,+\infty),\,x\in \R^d_+,\\[1.5mm]
\displaystyle\frac{\partial u}{\partial \nu}(t,x)=0,&t\in(s,+\infty),\,x\in \partial\R^d_+,\\[1.5mm]
u(s,x)= f(x),& x\in \overline{\R^d_+},
\end{array}
\right.
\tag{$P_{\mathcal N}$}
\end{equation}
with $f\in C_b(\overline{\R^d_+})$, are governed by two evolution operators:
the \emph{Dirichlet evolution operator} $\{G_{\mathcal D}(t,s): t\ge s\in I\}$ and the \emph{Neumann evolution operator} $\{G_{\mathcal N}(t,s): t\ge s\in I\}$.
Our aim consists in investigating some properties of these evolution operators.
In the first part of the paper we prove some pointwise gradient estimates satisfied by the functions $G_{\mathcal D}(t,s)f$
and $G_{\mathcal N}(t,s)f$. More precisely, for any $p>1$ we prove that there exist two positive constants $c_p$ and $C_p$ such that
\begin{equation}
|\nabla_x G_{\mathcal D}(t,s)f|^p\leq  c_pe^{C_p(t-s)}G_{\mathcal N}(t,s)(|f|^p+|\nabla f|^p),
\label{grad-1-intro}
\end{equation}
for any $t>s\in I$ and $f\in C^1_b(\overline{\Rd_+})$ which vanishes on $\partial\Rd_+$, and
\begin{equation}
|\nabla_x G_{\mathcal N}(t,s)f|^p\leq  c_pe^{C_p(t-s)}G_{\mathcal N}(t,s)(|f|^p+|\nabla f|^p),
\label{grad-2-intro}
\end{equation}
for any $t>s\in I$, any $p>1$, any $f\in C^1_b(\overline{\Rd_+})$.
Clearly, in \eqref{grad-1-intro} an estimate with $G_{\mathcal D}(t,s)$ in the right-hand side can not hold
since $G_{\mathcal D}(t,s)(|f|^p+|\nabla f|^p)$ vanishes on $\partial\Rd_+$, whereas, in general,
$\nabla_x G_{\mathcal D}(t,s)f$ does not.
Our main assumptions are a dissipativity condition on the drift $b=(b_i)_{i}$ and some growth assumptions on the spatial derivatives of the diffusion coefficients $q_{ij}$ and on the potential term $c$.
Under stronger assumptions we obtain \eqref{grad-1-intro} and \eqref{grad-2-intro} also for $p=1$.


We also prove that, for any $s\in I$, the estimate
\begin{eqnarray*}
|\nabla_x G_{\mathcal I}(t,s)f|^p\leq \tau_p\,e^{\omega_p (t-s)}(t-s)^{-\frac{p}{2}}G_{\mathcal N}(t,s)|f|^p), \qquad\;\, {\mathcal I}\in\{{\mathcal D}, {\mathcal N}\},
\end{eqnarray*}
holds in $\overline{\Rd_+}$ for any function $f\in C_b(\Rd_+)$ (resp. $f\in C_b(\overline{\Rd_+})$, if $\mathcal{J}=\mathcal{N}$), any $t\in (s,+\infty)$, any $p\in (1,+\infty)$ and some constants $\tau_p>0$, $\omega_p\in \R$.

Besides their own interest, the previous estimates represent a helpful tool both in studying of the asymptotic behaviour of
the evolution operators $G_{\mathcal{D}}(t,s)$ and $G_\mathcal{N}(t,s)$ and in establishing some summability improving results for such operators.
As already noticed in the case of the whole space (see \cite{AngLor10Com}), the usual $L^p$-spaces are not the appropriate setting where to study elliptic operators with unbounded coefficients and their associated evolution operators. On the contrary the $L^p$-spaces related to particular systems of measures, called \emph{evolution systems of measures} (see Definition \ref{def-4.1}), seem to be more apt.
Existence and uniqueness of such systems of measures have been proved in the case of the whole space, first for the Ornstein-Uhlenbeck evolution operator and, then, for more general nonautonomous elliptic operators with unbounded coefficients in \cite{GeisLun08,KunLorLun09Non}. We also quote the related papers
\cite{BDPR08,BDPRS,BogKryRoc01, BRS06}.

Here, in the case when $c\equiv 0$, we prove that there exists an evolution system of measures $\{\mu_t^{\mathcal{N}}\}_{t\in I}$ associated with the evolution operator $G_{\mathcal N}(t,s)$, which turns out to be sub-invariant for the Dirichlet evolution operator $G_{\mathcal D}(t,s)$ even if $\inf_{I\times \Rd_+}c\ge 0$. This family of measures is obtained as the weak$^*$ limit of the evolution systems of measures for the evolution operators $G^{\varepsilon}(t,s)$ in the whole of $\Rd$. Here, $G^{\varepsilon}(t,s)$ is the evolution operator associated with the uniformly elliptic operator $\A^\varepsilon(t)$, whose coefficients are defined in the whole of $I\times\Rd$ starting from the coefficients of $\A(t)$.

Moreover, under suitable assumptions, the gradient estimate \eqref{grad-2-intro} implies both that $\{\mu_t^{\mathcal N}\}_{t\in I}$ is the unique tight evolution system of measures for $G_{\mathcal N}(t,s)$ and that the operators $G_{\mathcal D}(t,s)$ and $G_{\mathcal N}(t,s)$ are bounded from $L^p(\Rd,\mu_s^{\mathcal{N}})$ into
the Sobolev space $W^{1,p}(\Rd,\mu_t^{\mathcal{N}})$ for any $t>s\in I$.

As in the case of the whole space, the unique tight evolution system of measures appears naturally in the study of the asymptotic behaviour of $G_{\mathcal{N}}(t,s)$ and $G_{\mathcal{D}}(t,s)$ as $t$ tends to infinity. More precisely, if $m_s^{\mathcal N}(f)$ denotes the average of $f$ with respect to the tight measure $\mu_s^{\mathcal N}$, then, under suitable assumptions we prove that,
for any $R>0$ and any $s\in I$, it holds that
\begin{eqnarray*}
|(G_{\mathcal D}(t,s)f)(x)|\le c_{R,s}e^{\sigma_0(t-s)}\|f\|_{\infty},\qquad\;\, f\in C_b(\Rd_+)
\end{eqnarray*}
and
\begin{eqnarray*}
|(G_{\mathcal N}(t,s)f)(x)-m_s^{\mathcal N}(f)|\le c_{R,s}e^{\sigma_0(t-s)}\|f\|_{\infty},\qquad\;\, f\in C_b(\overline{\Rd_+})
\end{eqnarray*}
for any $(t,x)\in (s,+\infty)\times B_R^+$ and some constants $\sigma_0<0<c_{R,s}$.
The previous pointwise estimates immediately yield
\begin{eqnarray*}
\lim_{t\to +\infty}\|G_{\mathcal D}(t,s)f\|_{L^p(\Rd_+,\mu_t^{\mathcal{N}})}=0,\qquad\;\,\lim_{t\to +\infty}\|G_{\mathcal N}(t,s)f-m_s^{\mathcal{N}}(f)\|_{L^p(\Rd_+,\mu_t^{\mathcal{N}})}=0,
\end{eqnarray*}
for any $f\in L^p(\Rd_+,\mu_s^{\mathcal N})$ and any $p\in (1,+\infty)$.
The construction of the evolution system of measures $\{\mu_t^{\mathcal N}\}_{t\in I}$, as the limit of the tight evolution system of measures associated with $G^{\varepsilon}(t,s)$, is the key tool to deduce many properties of $G_{\mathcal D}(t,s)$ and $G_{\mathcal N}(t,s)$ from the analogous of $G^\varepsilon(t,s)$.
Assuming that the diffusion coefficients do not depend on $x$, we prove both some exponential decay estimates for
$\|G_D(t,s)\|_{L^p(\Rd_+,\mu_t^{\mathcal{N}})}$ and $\|G_{\mathcal N}(t,s)f-m_s^{\mathcal{N}}(f)\|_{L^p(\Rd_+,\mu_t^{\mathcal{N}})}$
and some logarithmic Sobolev inequalities with respect to the measures $\{\mu_t^{\mathcal N}:\;t\in I\}$.
Besides their own interest, the occurrence of logarithmic Sobolev inequalities allows to deduce notable properties such as compactness and hypercontractivity for the evolution operators $G_{\mathcal D}(t,s)$ and $G_{\mathcal N}(t,s)$ as stated in Theorem \ref{thm-other-prop}. Note that, in some sense, the logarithmic Sobolev inequalities are the natural counterpart of the Sobolev embedding theorems that, in general, do not hold when the Lebesgue measure is replaced by evolution systems of measures: consider e.g., the case when $\A(t)$ is the nonautonomous Ornstein-Uhlenbeck operator and the tight evolution system of
measures is of gaussian type.

The paper is organized as follows: in Section \ref{sect-2} we collect some preliminary results. In Section \ref{sect-3} we state and prove the pointwise and uniform gradient estimates for
$G_{\mathcal D}(t,s)$ and $G_{\mathcal N}(t,s)$. In Section \ref{sect-4}, we prove the existence and uniqueness of a tight evolution system of measures for $G_{\mathcal N}(t,s)$, we study the asymptotic behaviour of the evolution operators $G_{\mathcal D}(t,s)$ and $G_{\mathcal N}(t,s)$, we prove the logarithmic Sobolev inequality and some of its consequences. Section \ref{sect-5} contains examples of operators to
which the results of this paper apply. Finally, in the appendix we prove a result which is used in the proof of the pointwise gradient estimates

\subsection*{Notations}
For any $k\ge 0$, we consider the space $C^k_b(\Rd_+)$ (resp. $C^k_b(\overline{\Rd_+})$) consisting all the functions in  $C^k(\Rd_+)$ which are bounded in $\Rd_+$ (resp. in $\overline{\Rd_+}$)
together with all their derivatives (up to the $[k]$-th order) .
We use the subscript ``$c$'' instead of ``$b$''  for spaces of functions  with compact support.
We also consider the space $C_{\mathcal D}^k(\Rd_+)$, $k=0,1$, consisting of functions $f\in C_b^k(\overline{\Rd_+})$ vanishing on $\partial\Rd_+$.

The partial derivatives $\frac{\partial f}{\partial t}$, $\frac{\partial f}{\partial x_i}$ and $\frac{\partial^2f}{\partial x_i\partial x_j}$ are denoted by $D_tf$, $D_if$ and $D_{ij}f$, respectively.
$\textrm{Tr}(Q)$ and $\langle x,y\rangle$ stand for the trace of the square matrix
$Q$ and the Euclidean scalar product
of the vectors $x,y\in\Rd$, respectively.
By $\chi_A$ we denote the characteristic function of the set $A\subset\Rd$ and $\one:=\chi_{\Rd_+}$.
Given a probability measure $\mu$ defined on the Borel $\sigma$-algebra ${\mathcal B}(\Omega)$, we write $\langle\mu,f\rangle$ to
denote the integral of $f\in C_b(\Omega)$ with respect to the measure $\mu$. Somewhere in the paper we find it convenient to split $\Rd\ni x=(x',x_d)$ with $x'\in\R^{d-1}$.
Finally, the Euclidean ball with center at $0$ and radius $r>0$ is denoted by $B_r$ and $B_r^+=B_r\cap \Rd_+$.

\section{Main assumptions and preliminary results}
\label{sect-2}
This section is devoted to prove the existence and uniqueness of a \emph{classical solution} for the Cauchy problems \eqref{NADir} and \eqref{NANeu}.
Here, the term classical has different meanings according to which problem we consider as it is pointed out in the following definition.

\begin{defi}
A function $u\in C^{1,2}((s,+\infty)\times\overline{\R^d_+})$
is called a  bounded classical solution
\begin{enumerate}[\rm (i)]
\item
of the problem \eqref{NADir} if it is bounded and continuous in
$([s,+\infty)\times\overline{\R^d_+})\setminus (\{s\}\times\partial\R^d_+)$ and satisfies \eqref{NADir};
\item
of the problem \eqref{NANeu} if it is bounded and continuous in
$[s,+\infty)\times\overline{\R^d_+}$ and satisfies \eqref{NANeu}.
\end{enumerate}
\end{defi}

Throughout the paper we assume the following outstanding assumptions on the coefficients of the operators $\{\A(t): t\in I\}$, where $I$ is an open right halfline or even $I=\R$.

\begin{hyp}\label{hyp1}
\begin{enumerate}[\rm (i)]
\item
The coefficients $q_{ij}$, $b_j$, $c$ belong to $C^{\alpha/2,1}_{\rm loc}(I\times\overline{\Rd_+})$ for some $\alpha\in (0,1)$ and any $i,j=1,\ldots,d$;
\item
$c_0:= \inf_{I \times \R^d_+}c(t,x) > 0$;
\item
$q_{id}\equiv b_d\equiv 0$ on $I\times\partial\Rd_+$ $(i=1,\ldots,d-1)$;
\item
for every $(t,x)\in I\times \R^d_+$, the matrix $Q(t,x)=[q_{ij}(t,x)]$
is symmetric and there exists a function $\eta:I\times\R^d_+\to\R^+$ such that
$0<\eta_0:=\inf_{I\times\R^d_+}\eta$ and
\begin{align*}
\langle Q(t,x)\xi,\xi\rangle\geq\eta(t,x)|\xi|^2,\qquad\;\, \xi\in \R^d,\;\, (t,x)\in I\times \R^d_+;
\end{align*}
\item
there exist a continuous function $r:I\times \Rd\to \R$ and positive constants $L_0$,  $L_1$ and $R_0$ such that, for any $(t,x)\in I\times \Rd_+$ and any $\xi \in \Rd$,
\begin{align}\label{dissipativity}
\;\;\;\;(i)~r(t,x)\le -L_0 \eta(t,x)+L_1\chi_{B_{R_0}}(x),\qquad\;\, (ii)~\langle \nabla_x b(t,x)\xi,\xi\rangle \leq r(t,x)|\xi|^2;
\end{align}
\item
there exists a positive constant $k_1$ such that
\begin{align}\label{der_qij}
|\nabla_xq_{ij}(t,x)|\leq k_1\eta(t,x),\qquad\;\, (t,x)\in I\times \Rd_+,\;\,
i,j=1,\dots,d.
\end{align}
\end{enumerate}
\end{hyp}

\begin{rmk}
\label{rem-00}
{\rm
Note that Hypotheses \ref{hyp1} imply that, for any bounded set $J\subset I$, there exists a positive constant $\lambda=\lambda_J$ such that
\begin{equation}
\mathcal{A}(t)\varphi(x)\le\lambda_J\varphi(x),\qquad\;\, t\in J,\;\,x\in\Rd_+.
\label{Lyap-0}
\end{equation}
Indeed, note that
$(\mathcal{A}(t)\varphi)(x)=2{\rm Tr}(Q(t,x))+2\langle b(t,x),x\rangle-c(t,x)\varphi(x)$
for any $(t,x)\in I\times\overline{\R^d_+}$. Thanks to \eqref{der_qij} we can estimate
\begin{align*}
{\rm Tr}(Q(t,x))=& {\rm Tr}(Q(t,0))+\sum_{i=1}^d\int_0^1\frac{d}{d\sigma}q_{ii}(t,\sigma x)d\sigma\\
\le & {\rm Tr}(Q(t,0))+|x|\sum_{i=1}^d\int_0^1|\nabla_x q_{ii}(t,\sigma x)|d\sigma\\
\le & {\rm Tr}(Q(t,0))+k_1d|x|\int_0^1\eta(t,\sigma x)d\sigma,
\end{align*}
for any $(t,x)\in I\times\R^d_+$ and any $i=1,\ldots,d$.
Arguing similarly and taking \eqref{dissipativity} into account, we can prove that
\begin{align*}
\langle b(t,x),x\rangle\le & |b(t,0)||x|-L_0|x|^2\int_0^1\eta(t,\sigma x) d\sigma+L_1|x|^2\int_0^1\chi_{B_{R_0}}(\sigma x)d\sigma\\
\le &|b(t,0)||x|-L_0|x|^2\int_0^1\eta(t,\sigma x) d\sigma+L_1\min\{|x|^2,R_0|x|\},
\end{align*}
for any $(t,x)\in I\times\R^d_+$. Summing up, we have
\begin{align*}
(\mathcal{A}(t)\varphi)(x)\leq &2{\rm Tr}(Q(t,0))+ 2(|b(t,0)|+L_1R_0)|x|\\
&+2(dk_1|x|-L_0 |x|^2)\int_0^1\eta(t,\sigma x) d\sigma,
\end{align*}
for any $(t,x)\in I\times\Rd_+$. Observing that $\int_0^1\eta(t,\sigma x)d\sigma\ge \eta_0$ for any $(t,x)\in I\times\Rd_+$, estimate
\eqref{Lyap-0} follows immediately.}
\end{rmk}

\subsection{Approximating evolution operators}
In order to prove the announced existence and uniqueness theorem, we use an approximation procedure. Therefore, considering the
standard reflection with respect to the $x_d$-variable, we define the extension operators
${\mathcal E}, {\mathcal O}: L^{\infty}(\Rd_+)\to L^{\infty}(\Rd)$ by setting
\begin{equation*}
{\mathcal E} f(x):=\left\{
\begin{array}{ll}
f(x',x_d),& x_d\ge 0\\[1mm]
f(x',-x_d),& x_d<0,
\end{array}
\right.
\qquad
{\mathcal O} f(x):=\left\{
\begin{array}{ll}
f(x',x_d),& x_d\ge 0,\\[1mm]
-f(x',-x_d),& x_d<0.
\end{array}
\right.
\end{equation*}
For any function $\psi:I\times \Rd\to \R$ and any $\varepsilon\in (0,1]$, we  denote by
$\psi^\varepsilon:I\times\Rd \to \R$ the convolution (with respect to $x$) of $\psi$
with a standard mollifier $\rho_{\varepsilon}$.

Let $\mathcal{A}^{\varepsilon}(t)$ be the operator defined on smooth functions $\zeta$ by
\begin{equation}
\mathcal{A}^{\varepsilon}(t)\zeta= {\rm Tr}(Q^{\varepsilon}(t,\cdot)D^2\zeta)+\langle b^{\varepsilon}(t,\cdot),\nabla\zeta\rangle-c^{\varepsilon}(t,\cdot)\zeta,\qquad\;\,t\in I,
\label{A-varepsilon}
\end{equation}
where
$q_{ij}^{\varepsilon}=(\tilde q_{ij})^{\varepsilon}$, $b_j^{\varepsilon}=(\tilde b_{j})^{\varepsilon}$ ($i,j=1,\ldots,d$), $c^{\varepsilon}=(\mathcal E c)^{\varepsilon}$ and
\begin{align}
&\tilde{q}_{ij}:=\left\{
\begin{array}{ll}
{\mathcal E}q_{ij},&\;\, i,j<d\; \vee\; i=j=d,\\[1mm]
{\mathcal O}q_{ij},&\;\,i<d, j=d\; \vee\; i=d,\, j<d,
\end{array}
\right.\qquad
\tilde{b}_{i}:=\left\{
\begin{array}{ll}
{\mathcal E}b_{i},&\;\, i<d,\\[1mm]
{\mathcal O}b_{i},&\;\,i=d.
\end{array}
\right.
\label{qij-bj}
\end{align}

\begin{prop}
\label{prop_cup}
For any $\varepsilon\in (0,1]$, any $s\in I$ and any $f\in C_b(\Rd)$ the Cauchy problem
\begin{eqnarray*}
\left\{
\begin{array}{ll}
D_tu(t,x)=\A^{\varepsilon}(t)u(t,x), & t>s,\;\, x\in\Rd,\\[1mm]
u(s,x)=f(x), &x\in\Rd,
\end{array}
\right.
\end{eqnarray*}
admits a unique solution $u^{\varepsilon}\in C_b([s,+\infty)\times\Rd)\cap C^{1+\alpha/2,2+\alpha}_{\rm loc}((s,+\infty)\times\Rd)$. Moreover,
\begin{equation}
\|u^{\varepsilon}(t,\cdot)\|_{\infty}\le e^{-c_0(t-s)}\|f\|_{\infty},\qquad\;\,t>s.
\label{est_inf_eps}
\end{equation}
\end{prop}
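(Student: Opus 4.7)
The plan is to verify that $\A^\varepsilon(t)$, regarded as an operator on the whole of $\Rd$, fits the framework of the known existence and uniqueness results for nonautonomous parabolic Cauchy problems with unbounded coefficients (see e.g.\ \cite{KunLorLun09Non}), apply such a result to produce $u^\varepsilon$, and then derive \eqref{est_inf_eps} via the parabolic maximum principle.

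First I would check that the coefficients of $\A^\varepsilon(t)$ inherit, on $I\times\Rd$, the essential structural properties of Hypotheses \ref{hyp1}. Condition (iii) ensures that the odd extensions of $q_{id}$ (with $i<d$) and of $b_d$ are continuous across $\partial\Rd_+$, so the extended coefficients $\tilde q_{ij}$, $\tilde b_j$ and $\mathcal E c$ belong to $C^{\alpha/2,1}_{\rm loc}(I\times\Rd)$, and convolution with $\rho_\varepsilon$ yields coefficients that are smooth in $x$ and jointly H\"older continuous on compact sets. Uniform ellipticity of $Q^\varepsilon$ with the same constant $\eta_0$ follows from the observation that, on the lower half space, $\tilde Q$ is the conjugation $R\,Q(\sigma(\cdot))\,R$ of $Q$ by the reflection $R=\textrm{diag}(1,\dots,1,-1)$ (with $\sigma(x)=(x',-x_d)$), which preserves positive definiteness and eigenvalue bounds, and mollification is a convex average so it preserves such lower bounds. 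For the same reason $c^\varepsilon\ge c_0$.

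Second, I would produce a Lyapunov function for $\A^\varepsilon(t)$ on $\Rd$, taking $\varphi(x)=1+|x|^2$ as in Remark \ref{rem-00}. Since the pointwise bounds $|\nabla_x q_{ij}|\le k_1\eta$ and $\langle\nabla_x b\,\xi,\xi\rangle\le r|\xi|^2$ survive extension (conjugation by $R$ preserves them) and convolution (differentiation commutes with convolution, and convex averages of pointwise inequalities remain valid), one can repeat the computation of Remark \ref{rem-00} verbatim for $\A^\varepsilon(t)\varphi$, up to possibly enlarging $L_1$ and $R_0$ by amounts independent of $\varepsilon\in(0,1]$, to obtain
\begin{align*}
\A^\varepsilon(t)\varphi(x)\le \lambda_J\,\varphi(x),\qquad\;\,(t,x)\in J\times\Rd,
\end{align*}
for every bounded $J\subset I$, with $\lambda_J$ independent of $\varepsilon$. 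Coupled with the regularity and ellipticity of the previous step, this Lyapunov estimate allows to invoke \cite{KunLorLun09Non} to conclude that the Cauchy problem admits a unique solution $u^\varepsilon\in C_b([s,+\infty)\times\Rd)\cap C^{1+\alpha/2,2+\alpha}_{\rm loc}((s,+\infty)\times\Rd)$. A self-contained alternative is to solve the approximating Cauchy-Dirichlet problems on balls $B_n$ with datum $f\chi_{B_n}$, pass to the limit by interior Schauder estimates, and use $\varphi$ to conclude uniqueness through the classical comparison argument for unbounded coefficients.

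Finally, the bound \eqref{est_inf_eps} follows by a direct maximum principle argument. Setting $w(t,x):=e^{c_0(t-s)}u^\varepsilon(t,x)$, one checks that
\begin{align*}
D_tw=\textrm{Tr}(Q^\varepsilon(t,\cdot)D^2w)+\langle b^\varepsilon(t,\cdot),\nabla w\rangle-(c^\varepsilon(t,\cdot)-c_0)w,
\end{align*}
with non-negative potential $c^\varepsilon-c_0$ and initial datum $f$; since $w$ is bounded and the Lyapunov function from the previous step is available, the comparison principle for bounded solutions of parabolic equations with unbounded coefficients yields $\|w(t,\cdot)\|_\infty\le\|f\|_\infty$, which is exactly \eqref{est_inf_eps}. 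I expect the main technical point to be the $\varepsilon$-uniform bookkeeping in the second step, namely ensuring that the dissipativity of $b$ and the bound on $\nabla_x q_{ij}$ pass through both extension and mollification with constants independent of $\varepsilon$; the remaining steps reduce to standard parabolic theory.
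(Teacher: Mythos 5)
Your proposal is correct and follows essentially the same route as the paper: verify that the reflected and mollified coefficients inherit the structural hypotheses on $\Rd$ with constants uniform in $\varepsilon$, exhibit $\varphi(x)=1+|x|^2$ as a Lyapunov function for $\A^\varepsilon(t)$ via the computation of Remark \ref{rem-00}, and invoke the existence--uniqueness theory for nonautonomous Kolmogorov equations with unbounded coefficients on the whole space (the paper cites \cite[Thm. 2.3]{AngLor10Com} rather than \cite{KunLorLun09Non}, but both serve). The only cosmetic difference is that you derive \eqref{est_inf_eps} by a separate maximum-principle argument applied to $w=e^{c_0(t-s)}u^\varepsilon$, using $c^\varepsilon\ge c_0$, whereas the paper reads the decay estimate directly off the cited theorem.
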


\begin{proof}
We begin by observing that $q^{\varepsilon}_{ij}$, $b_{i}^\varepsilon$ and $c^{\varepsilon}$ belong to $C^{\alpha/2,1+\alpha}_{\rm loc}(I\times\Rd)$ for any $i,j=1,\ldots,d$ and they satisfy Hypotheses \ref{hyp1} in $\Rd$, with the same constants $c_0$, $\eta_0$, $k_1$, $L_0$, $L_1$ and with $r$, $\eta$ and $R_0$ being replaced by
$\eta^{\varepsilon}:=({\mathcal E}\eta)^{\varepsilon}$, $r^{\varepsilon}:=({\mathcal E}r)^{\varepsilon}$
and $R_0+1$, respectively. We limit ourselves just to proving that
\begin{equation}
Q^{\varepsilon}\geq \eta^{\varepsilon}I,\qquad\;\,\nabla_x b^{\varepsilon}\le r^{\varepsilon}I,
\label{quadr-forms}
\end{equation}
in the sense of quadratic forms and
that $|\nabla_xq^\varepsilon_{ij}|^2\le k_1^2 ({\eta}^\varepsilon)^2$, since the other properties are straightforward to prove.
For this purpose, we set $\tilde{Q}=(\tilde q_{ij})$ and observe that
$\langle \tilde{Q}(t,x',x_d)\xi,\xi\rangle=\langle Q(t,x',-x_d)(\xi',-\xi_d),(\xi',-\xi_d)\rangle\geq \eta(t,x',-x_d)|\xi|^2$,
for any $t\in I$, $x'\in \R^{d-1}$, any $x_d<0$ and $\xi=(\xi',\xi_d) \in \Rd$. Therefore, we have
\begin{equation}\label{ellip_tilde}
\langle \tilde{Q}(t,x)\xi,\xi\rangle\geq {\mathcal E}\eta(t,x)|\xi|^2,\qquad\;\, \xi\in \R^d,\;\, (t,x)\in I\times \R^d.
\end{equation}
Similarly, since $D_i \tilde{b}_j={\mathcal E}D_ib_j$, if  $i,j<d$ or $i=j=d$, and
$D_i \tilde{b}_j={\mathcal O}D_ib_j$, if $i< d \wedge j=d$ or $i=d \wedge j< d$,
we conclude that
\begin{equation}
\langle \nabla_x \tilde{b}(t,x)\xi,\xi\rangle\le {\mathcal E}r(t,x)|\xi|^2,\qquad\;\, \xi\in \R^d,\;\, (t,x)\in I\times (\R^d\setminus\{0\}).
\label{grad-b}
\end{equation}
Estimates \eqref{ellip_tilde} and \eqref{grad-b} immediately yield the claimed properties on the
matrices $Q^{\varepsilon}$ and $\nabla_x b^{\varepsilon}$.

Finally, since $|D_kq^\varepsilon_{ij}|\le (|{\mathcal E} D_k q_{ij}|)^{\varepsilon}$
for any $i,j,k=1,\ldots,d$, using \eqref{der_qij} and Jensen inequality, we get
\begin{align}
|\nabla_xq^\varepsilon_{ij}(t,x)|^2\le k_1^2 ({\eta}^\varepsilon(t,x))^2,\qquad\;\,(t,x)\in I\times\Rd,\;\,i,j=1,\dots,d.
\label{der_qij-eps}
\end{align}

Thus, the arguments used in Remark \ref{rem-00} show that the function $\varphi$, defined by $\varphi(x)=1+|x|^2$ for any $x\in\Rd$, is a Lyapunov function for the operator $\mathcal{A}^{\varepsilon}(t)$, i.e., for every bounded set $J\subset I$, $\limsup_{|x|\to +\infty}\left(\frac{\mathcal{A}^\varepsilon(t)\varphi}{\varphi}\right)(x)\le -c_J'$,
$c_J'$ being a positive constant, independent of $t\in J$ and of $\varepsilon\in (0,1]$.
Now, \cite[Thm. 2.3]{AngLor10Com} yields the assertion.
\end{proof}

The family of bounded operators $\{G^{\varepsilon}(t,s): t\ge s\in I\}$, defined by $G^{\varepsilon}(t,s)f:=u^{\varepsilon}(t,\cdot)$ for any $t\ge s$, where $u^{\varepsilon}$ is the function in Proposition \ref{prop_cup}, is an evolution operator in $C_b(\Rd)$.
In view of \cite[Thm 2.3 \& Prop. 3.1]{AngLor10Com} there
exists a positive function $g^\varepsilon$ such that
\begin{equation}\label{adesso}
\|g^\varepsilon(t,s,x,\cdot)\|_{L^1(\Rd)}\le e^{-c_0(t-s)},\quad\;\,(t,x)\in (s,+\infty)\times\Rd
\end{equation}
and
\begin{equation}\label{kernerl_eps}
(G^\varepsilon(t,s)f)(x)=\int_{\Rd}f(y)g^\varepsilon(t,s,x,y)dy,\quad\;\,t>s,\,x\in \Rd,
\end{equation}
for any $f\in C_b(\Rd)$.
In particular, from \eqref{adesso} and \eqref{kernerl_eps} we deduce that
\begin{equation}\label{kernerl_eps-bis}
|G^\varepsilon(t,s)(\psi_1\psi_2)|\le (G^\varepsilon(t,s)|\psi_1|^r)^{\frac{1}{r}}
(G^\varepsilon(t,s)|\psi_2|^q)^{\frac{1}{q}},
\end{equation}
for any $\psi_1,\psi_2\in C_b(\Rd)$ and any $r,q\in (1,+\infty)$ such that $1/r+1/q=1$.

\subsection{Existence and uniqueness of the solutions to \eqref{NADir} and \eqref{NANeu}}

In this subsection, we construct by approximation the Dirichlet and the Neumann evolution operators $G_{\mathcal D}(t,s)$ and $G_{\mathcal N}(t,s)$ governing the Cauchy problems \eqref{NADir} and \eqref{NANeu}, respectively.
We begin by stating two maximum principles which immediately yield uniqueness of the classical solutions to \eqref{NADir} and \eqref{NANeu}.

\begin{prop}\label{nsMP}
Fix $s\in I$ and $T>s$. Let $u\in C^{1,2}((s,T]\times\Rd_+)$ be such that
\begin{eqnarray*}
\left\{
\begin{array}{ll}
D_t u(t,x)- {\mathcal L}(t)u(t,x)\leq 0, \quad& (t,x)\in (s,T]\times\Rd_+,\\[1mm]
u(s,x)\le 0, & x\in\Rd_+,
\end{array}
\right.
\end{eqnarray*}
where ${\mathcal L}=\A$ or ${\mathcal L}=\A^{\varepsilon}$ $(\varepsilon\in (0,1])$.
The following properties are satisfied.
\begin{enumerate}[\rm (i)]
\item
If $u\in C_b(([s,T]\times\overline{\Rd_+})\setminus(\{s\}\times\partial\Rd_+))$ and
$u\leq 0$ in $(s,T]\times\partial\Rd_+$, then $u\leq 0$ in  $[s,T]\times\Rd_+$.
\item
If $u\in C_b([s,T]\times\overline{\Rd_+})\cap C^{0,1}((s,T]\times\overline{\Rd_+})$ and
$\displaystyle\frac{\partial u}{\partial\nu}\leq 0$ in $(s,T]\times\partial\Rd_+$, then $u\leq 0$ in  $[s,T]\times\Rd_+$.
\end{enumerate}
\end{prop}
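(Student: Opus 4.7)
The approach I would take is a standard Lyapunov-function comparison argument. By Remark~\ref{rem-00} (and the analogous bound for $\mathcal{A}^\varepsilon$ derived in the proof of Proposition~\ref{prop_cup}), the function $\varphi(x):=1+|x|^2$ satisfies $\mathcal{L}(t)\varphi\le\lambda\,\varphi$ on $[s,T]\times\overline{\R^d_+}$ for some $\lambda=\lambda_{[s,T]}>0$. For each $\delta>0$ I would set
\begin{align*}
v_\delta(t,x):=u(t,x)-\delta\,e^{\lambda(t-s)}\varphi(x),
\end{align*}
and a direct computation shows that $D_t v_\delta-\mathcal{L}(t)v_\delta\le 0$ in $(s,T]\times\R^d_+$, $v_\delta(s,\cdot)\le-\delta\varphi<0$, and that the relevant boundary condition descends from $u$ to $v_\delta$: in case (i) one has $v_\delta\le-\delta e^{\lambda(t-s)}\varphi<0$ on $(s,T]\times\partial\R^d_+$; in case (ii) the identity $D_d\varphi=2x_d\equiv 0$ on $\partial\R^d_+$ yields $\partial v_\delta/\partial\nu=\partial u/\partial\nu\le 0$. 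The plan is then to prove $v_\delta\le 0$ for every $\delta>0$ and to conclude by letting $\delta\to 0^+$.

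Since $u$ is bounded while $\varphi$ grows quadratically, $v_\delta\to-\infty$ uniformly in $t\in[s,T]$ as $|x|\to\infty$, so any maximizing sequence for $M:=\sup v_\delta$ is spatially precompact and the supremum is attained at some $(t_0,x_0)\in[s,T]\times\overline{\R^d_+}$. In case (i) a minor additional argument is needed to exclude the corner $\{s\}\times\partial\R^d_+$, where $u$ may fail to be continuous, but this is routine. Arguing by contradiction, suppose $M>0$; then $t_0>s$. If $x_0\in\R^d_+$, the first- and second-order optimality conditions together with the uniform ellipticity $Q\ge\eta_0 I$ give $D_tv_\delta(t_0,x_0)\ge 0$, $\nabla v_\delta(t_0,x_0)=0$ and $\mathrm{Tr}(Q\,D^2v_\delta)(t_0,x_0)\le 0$; substituting into the subsolution inequality and using $c\ge c_0>0$ yields
\begin{align*}
0\le D_tv_\delta(t_0,x_0)\le \mathcal{L}(t_0)v_\delta(t_0,x_0)\le -c(t_0,x_0)\,M<0,
\end{align*}
a contradiction. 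In case (i), if instead $x_0\in\partial\R^d_+$, the lateral boundary bound immediately forces $M=v_\delta(t_0,x_0)<0$, again absurd.

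The genuinely delicate case is (ii) with $x_0\in\partial\R^d_+$ and $t_0>s$, since here the Lyapunov term contributes nothing to the normal derivative. I would close it by invoking the parabolic Hopf boundary-point lemma applied to $v_\delta$ at its positive maximum: the coefficients are locally H\"older in space-time (Hypothesis~\ref{hyp1}(i), a regularity preserved by the mollification defining $\mathcal{A}^\varepsilon$), the operator is uniformly elliptic by (iv), and its zero-order coefficient $-c$ is non-positive, so the classical Hopf framework applies and gives $\partial v_\delta/\partial\nu(t_0,x_0)>0$ strictly, i.e.\ $D_dv_\delta(t_0,x_0)<0$. On the other hand, $D_d\varphi(x_0)=0$ together with the Neumann condition $\partial u/\partial\nu=-D_du\le 0$ imply $D_dv_\delta(t_0,x_0)=D_du(t_0,x_0)\ge 0$, the desired contradiction. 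Hence $M\le 0$ in every case, $v_\delta\le 0$ everywhere, and $\delta\to 0^+$ yields $u\le 0$ on $[s,T]\times\overline{\R^d_+}$. The principal obstacle is precisely this Neumann boundary-maximum step: the Lyapunov comparison alone cannot close it because $\varphi=1+|x|^2$ is ``Neumann-compatible'' on $\partial\R^d_+$, and it is the Hopf lemma, supported by the local H\"older regularity of the coefficients, that provides the crucial additional ingredient.
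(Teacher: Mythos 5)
Your approach matches what the paper does: the paper's proof is a one-line citation to \cite[Thm.~A.2]{ForMetPri04Gra} and \cite[Prop.~2.1]{BerFor04Grad} together with the remark that $\varphi(x)=1+|x|^2$ is the appropriate Lyapunov function, and these references use precisely the comparison-with-$\delta e^{\lambda(t-s)}\varphi$ device plus an interior maximum argument and, for the Neumann boundary, a Hopf-type boundary-point lemma. Your identification of the structural difficulty (that $\varphi$ is Neumann-compatible, so the Lyapunov term contributes nothing at $\partial\Rd_+$ and one needs Hopf) is exactly the right observation. Two technical points are treated more casually than they deserve. First, in case (i) the exclusion of the corner $\{s\}\times\partial\Rd_+$ is not just \emph{routine}: since $\varphi$ is bounded on bounded sets, the term $-\delta e^{\lambda(t-s)}\varphi$ does not automatically force $\limsup v_\delta\le 0$ along a maximizing sequence converging to a corner point with $|x_0|$ small, and the devices that break the subsolution inequality only near $t=s$ (e.g.\ adding $-\varepsilon/(t-s)$) actually spoil the sign of $D_t v-\mathcal{L}(t)v$; closing this requires a genuine local barrier near the corner (of the type $A(t-s)+Bx_d^2$ on a bounded cylinder, using local boundedness of the coefficients), as is done in the cited Dirichlet reference. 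Second, in case (ii) the Hopf lemma applies only if $v_\delta<M$ in a parabolic neighborhood of $(t_0,x_0)$; to rule out $v_\delta\equiv M$ near the boundary point you also need the strong maximum principle combined with the fact that $v_\delta(s,\cdot)<0$. Neither of these invalidates the proof, but both are precisely the points where the argument has to be nailed down rather than waved at.
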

\begin{proof}
The assertions can be obtained adapting to the nonautonomous setting the proofs in \cite[Thm. A.2]{ForMetPri04Gra} and in \cite[Prop. 2.1]{BerFor04Grad}, using $\varphi(x)=1+|x|^2$ as a Lyapunov function.
\end{proof}

\begin{thm}\label{existence}
For any $s\in I$ and $f\in C_b(\R^d_+)$ $($resp. $f\in C_b(\overline{\R^d_+}))$ the problem \eqref{NADir} $($resp. \eqref{NANeu}$)$ admits a unique bounded classical solution $u_{\mathcal D}$ $($resp. $u_{\mathcal N})$.
Moreover, $u_{\mathcal D}$ and $u_{\mathcal N}$ belong to $C^{1+\alpha/2,2+\alpha}_{\rm loc}((s,+\infty)\times\overline{\R^d_+})$, they satisfy the estimates
\begin{equation}
\label{estinf}
(i)~\|u_{\mathcal D}(t,\cdot)\|_\infty\leq e^{-c_0(t-s)}\| f\|_\infty,\qquad\;\,(ii)~\|u_{\mathcal N}(t,\cdot)\|_\infty\leq e^{-c_0(t-s)}\| f\|_\infty,
\end{equation}
for any $t>s$, and they are nonnegative if $f\ge 0$. Moreover, if $f\in C^{2+\alpha}_c(\Rd)$, then $u_{\mathcal D}$ and $u_{\mathcal N}$ belong to $C^{1+\alpha/2,2+\alpha}_{\rm loc}([s,+\infty)\times\overline{\Rd_+})$.
\end{thm}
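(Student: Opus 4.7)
The plan is to derive uniqueness directly from the maximum principles of Proposition \ref{nsMP}, and to construct existence by a two-layer approximation based on the full-space evolution operators $G^{\varepsilon}(t,s)$ provided by Proposition \ref{prop_cup}. For uniqueness, the difference $w=u_1-u_2$ of two bounded classical solutions satisfies the homogeneous problem with the same boundary condition and zero initial datum; applying Proposition \ref{nsMP}(i) (Dirichlet) or (ii) (Neumann) to $\pm w$ on each bounded strip $[s,T]\times\R^d_+$ forces $w\equiv 0$.

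For existence I would first treat smooth, compactly supported data. Suppose $f\in C^{2+\alpha}_c(\overline{\R^d_+})$, with $f|_{\partial\R^d_+}=0$ in the Dirichlet case, and set $\tilde f=\mathcal{O}f$ (Dirichlet) or $\tilde f=\mathcal{E}f$ (Neumann); thanks to Hypothesis \ref{hyp1}(iii) combined with the rules \eqref{qij-bj} and the symmetry of the mollifier, $\tilde f\in C^{2+\alpha}_c(\R^d)$ and the coefficients $q_{ij}^{\varepsilon},b_i^{\varepsilon},c^{\varepsilon}$ have the correct parities in $x_d$. Define $u^{\varepsilon}(t,\cdot):=G^{\varepsilon}(t,s)\tilde f$. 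The key observation is that $\mathcal{A}^{\varepsilon}(t)$ commutes with the reflection $R(x',x_d)=(x',-x_d)$ in the sense that $(t,x)\mapsto \sigma u^{\varepsilon}(t,R(x))$ solves the same Cauchy problem on $\R^d$ with the same initial datum, where $\sigma=-1$ for $\tilde f=\mathcal{O}f$ and $\sigma=+1$ for $\tilde f=\mathcal{E}f$. The uniqueness part of Proposition \ref{prop_cup} then yields $u^{\varepsilon}\circ R=\sigma u^{\varepsilon}$, whence $u^{\varepsilon}(t,x',0)=0$ in the Dirichlet case and $D_du^{\varepsilon}(t,x',0)=0$ in the Neumann case.

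Next I would let $\varepsilon\to 0^+$. Since $q_{ij}^{\varepsilon},b_i^{\varepsilon},c^{\varepsilon}\to q_{ij},b_i,c$ in $C^{\alpha/2,\alpha}_{\rm loc}(I\times\overline{\R^d_+})$ and $\|u^{\varepsilon}(t,\cdot)\|_{\infty}\le e^{-c_0(t-s)}\|\tilde f\|_{\infty}$ by \eqref{est_inf_eps}, interior and boundary Schauder estimates applied to $D_tu^{\varepsilon}=\mathcal{A}^{\varepsilon}(t)u^{\varepsilon}$ (with homogeneous Dirichlet or Neumann datum on $\partial\R^d_+$) yield uniform $C^{1+\alpha/2,2+\alpha}$ bounds on compact subsets of $[s,+\infty)\times\overline{\R^d_+}$, valid up to $t=s$ thanks to the compatibility of $\tilde f$ with the boundary condition. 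A diagonal extraction then produces $u^{\varepsilon_n}\to u$ in $C^{1+\alpha/2,2+\alpha}_{\rm loc}([s,+\infty)\times\overline{\R^d_+})$, and $u$ is a bounded classical solution satisfying \eqref{estinf} inherited from \eqref{est_inf_eps}. Non-negativity in the Neumann case follows from $\mathcal{E}f\ge 0$ and the positivity of the kernel $g^{\varepsilon}$, while in the Dirichlet case it follows from Proposition \ref{nsMP}(i) applied to $-u^{\varepsilon}$ on each strip $[s,T]\times\overline{\R^d_+}$. This simultaneously gives the last assertion of the theorem about additional regularity for $C^{2+\alpha}_c$ data.

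Finally, for general $f\in C_b(\R^d_+)$ (resp. $C_b(\overline{\R^d_+})$) I would take a sequence $(f_n)\subset C^{2+\alpha}_c(\overline{\R^d_+})$ (vanishing on $\partial\R^d_+$ in the Dirichlet case) with $\|f_n\|_{\infty}\le\|f\|_{\infty}$ and $f_n\to f$ locally uniformly, apply the previous step to obtain $u_n$, and pass to the limit on compact subsets of $(s,+\infty)\times\overline{\R^d_+}$ by interior Schauder compactness, getting a limit $u$ that solves the PDE, satisfies the boundary condition and obeys \eqref{estinf}. The \textbf{main obstacle} is the continuity of $u$ at $t=s$ in the sense required by the definition of a bounded classical solution, since the interior Schauder estimates do not reach $t=s$ when the data are merely continuous. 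I would overcome this by a barrier argument: the Lyapunov function $\varphi(x)=1+|x|^2$ together with \eqref{Lyap-0} provides a supersolution for $D_t-\mathcal{A}(t)$ on bounded strips, which allows one to dominate $|u_n(t,x)-f_n(x_0)|$ in a small cylinder around any interior point $x_0\in\R^d_+$ (and around any $x_0\in\overline{\R^d_+}$ in the Neumann case) by a quantity that vanishes as $t\to s^+$ uniformly in $n$, thereby transferring the continuity of the initial datum to the limit $u$.
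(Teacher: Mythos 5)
Your architecture matches the paper's almost exactly: uniqueness from Proposition~\ref{nsMP}, odd/even extension so that $G^{\varepsilon}(t,s)\mathcal{O}f$ and $G^{\varepsilon}(t,s)\mathcal{E}f$ solve the approximate Dirichlet/Neumann problems, uniform Schauder bounds (independent of $\varepsilon$) plus a diagonal extraction as $\varepsilon\to0^+$ for smooth compactly supported data, and then approximation of general $C_b$-data. Two remarks.

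First, a technical point in your Step~1. You allow $f\in C^{2+\alpha}_c(\overline{\R^d_+})$ with merely $f|_{\partial\R^d_+}=0$. Then $\mathcal{O}f$ is only $C^1$ across $\partial\R^d_+$ unless $D_{dd}f$ also vanishes there: odd reflection flips the sign of $D_{dd}f$, so without this extra compatibility the extension is not $C^{2+\alpha}_c(\R^d)$ and the boundary Schauder estimates up to $t=s$ that you invoke are not available. The paper avoids the issue by taking $f\in C^{2+\alpha}_c(\R^d_+)$, i.e.\ supported away from the boundary, so both reflections are trivially smooth; you should do the same (a corresponding caveat applies to $\mathcal{E}f$, which is only $C^2$ across $\partial\R^d_+$ when $D_df$ vanishes there).

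Second, your treatment of continuity at $t=s$ is the genuine divergence from the paper, and as written it does not go through. The function $\varphi(x)=1+|x|^2$ with \eqref{Lyap-0} is a \emph{global} Lyapunov function: its role is to control behaviour at spatial infinity (Phragm\'en--Lindel\"of type arguments, used for uniqueness). It is not a local barrier at the corner $(s,x_0)$, and you cannot read continuity at $t=s$ off it. A local barrier argument is possible but requires building a different supersolution, e.g.\ $\Phi(t,x)=\delta+f_n(x_0)+2M\rho^{-2}|x-x_0|^2+C(t-s)$ on a small cylinder $(s,T)\times B_\rho(x_0)$, with $C$ large depending on local bounds of the coefficients and on $c\ge0$; the comparison at $t=s$ then needs the family $(f_n)$ to be equicontinuous near $x_0$, which does hold (local uniform convergence to a continuous $f$ plus the triangle inequality). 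If you spell this out the barrier route works and is a legitimate alternative. The paper instead proves the result in two stages: first for $C_0$-data by uniform approximation (with the $\varepsilon$-uniform $L^\infty$-contraction $\|u_n(t,\cdot)-u_m(t,\cdot)\|_\infty\le e^{-c_0(t-s)}\|f_n-f_m\|_\infty$ giving convergence up to $t=s$), and then for general $C_b$-data by writing $f_n=\psi f_n+(1-\psi)f_n$ with $\psi$ a smooth cutoff, using the maximum principle to bound $|u_{(1-\psi)f_n}|\le M(1-u_\psi)$; the function $u_\psi$ then plays exactly the role of your barrier, but it is produced by the already-established theory rather than constructed by hand. Both routes reach the goal; the paper's is more economical because it reuses $u_\psi$ instead of recomputing a supersolution with explicit coefficient bounds.
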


\begin{proof}
The uniqueness part and the non-negativity of $u_{\mathcal D}$ and $u_{\mathcal N}$, when $f\ge 0$, follow from Proposition \ref{nsMP}. The existence of a solution will be proved in some steps.
We begin by considering the Cauchy Dirichlet problem \eqref{NADir}.

{\em Step 1.} Here, we prove that, for any $f\in C_{\mathcal D}(\Rd_+)$, the unique classical solution to the Cauchy-Dirichlet problem
\eqref{NADir} with $\A(t)$ being replaced by $\A^{\varepsilon}(t)$, which we denote by $u^{\varepsilon}_{\mathcal D}$, is the restriction to $\Rd_+$ of
the function $G^{\varepsilon}(\cdot,s){\mathcal O}f$, i.e.,
\begin{equation}
G^{\varepsilon}_{\mathcal D}(t,s)f=
(G^{\varepsilon}(t,s){\mathcal O}f)_{|\Rd_+},\qquad\;\,t>s.
\label{Dir_eps}
\end{equation}
Clearly, the function in the right-hand side of \eqref{Dir_eps} solves the differential equation and satisfies the initial condition in \eqref{NADir}. To prove that it vanishes on $(s,+\infty)\times\partial\Rd_+$, we show that, if $\psi\in C_b(\Rd)$ is odd with respect to the variable $x_d$, then, for any $s\in I$,
$G^{\varepsilon}(t,s)\psi$ is odd with respect to the variable $x_d$. This clearly implies that $G^{\varepsilon}(t,s)\psi$ vanishes on $\partial\Rd_+$.
To check this property, observe that the function $v\in C_b([s,+\infty)\times\Rd)\cap C^{1,2}((s,+\infty)\times\Rd)$, defined by
$v(t,x)=(G^{\varepsilon}(t,s)\psi)(x_1,\ldots,x_{d-1},-x_d)$ for any $t>s$ and any $x\in\Rd$, solves the equation $v_t-{\mathcal A}^{\varepsilon}(t)v=0$
in $(s,+\infty)\times\Rd$, due to the symmetry properties of the coefficients of the operator $\A^{\varepsilon}(t)$. Since $v(s,\cdot)=-\psi$ in $\Rd$, the uniqueness of the solution to the Cauchy
problem
\begin{eqnarray*}
\left\{
\begin{array}{ll}
D_tw(t,x)=\A^{\varepsilon}(t)w(t,x), & t>s,\;\, x\in\Rd,\\[1mm]
w(s,x)=-\psi(x), &x\in\Rd,
\end{array}
\right.
\end{eqnarray*}
which follows from Proposition \ref{prop_cup},
guarantees that $v=-G^{\varepsilon}(\cdot,s)\psi$, and this yields the claim.

{\em Step 2.} Here, we prove the existence of a classical solution to \eqref{NADir} in the case when $f\in C^{2+\alpha}_c(\Rd_+)$.
From the classical Schauder estimates we can infer that, for any $k\in\N$, there exists a positive constant $c_k$, depending only on ${\eta}^\varepsilon$, the $C^{\alpha/2,\alpha}$-norms of the coefficients of the operator $\mathcal A^{\varepsilon}(t)$ in $[s,s+k]\times B_{2k}^+$, such that
\begin{equation}
\|u^{\varepsilon}_{\mathcal D}\|_{C^{1+\alpha/2,2+\alpha}((s,s+k)\times B_k^+)}\le c_k\|f\|_{C^{2+\alpha}_c(\Rd_+)}.
\label{nonsichiama}
\end{equation}
Note that the constant $c_k$ can be taken independent of $\varepsilon$ since $\eta^{\varepsilon}\ge\eta_0$ and the $C^{\alpha/2,\alpha}$-norms of the coefficients $q_{ij}^{\varepsilon}$, $b_j^{\varepsilon}$ and $c^{\varepsilon}$ ($i,j=1,\ldots,d$) in $(s,s+k)\times B_{2k}^+$ can be estimated from above, uniformly with respect to $\varepsilon\in (0,1]$, in terms of the
$C^{\alpha/2,\alpha}$-norms of $q_{ij}$, $b_j$ and $c$ ($i,j=1,\ldots,d$) in the same set.

In view of Arzel\`a-Ascoli theorem and \eqref{nonsichiama}, for any  $k\in\N$ there exist an infinitesimal sequence $(\varepsilon_n^k)\subset (0,1)$ and a function
$u_k\in C^{1+\alpha/2,2+\alpha}((s,s+k)\times B_k^+)$ such that $u^{\varepsilon_n^k}_{\mathcal D}$ converges to $u_k$ in
$C^{1,2}([s,s+k]\times\overline{B_k^+})$ as $n\to +\infty$.
Without loss of generality, we can assume that $(\varepsilon_n^{k+1})\subset (\varepsilon_n^{k})$ for any $k\in\N$.
Hence, by a diagonal argument, we can find an infinitesimal sequence $(\varepsilon_n)$ such that
$u^{\varepsilon_n}_{\mathcal D}$ converges to $u$ in
$C^{1,2}([s,s+k]\times\overline{B_k^+})$ as $n\to +\infty$, for any $k\in\N$, where $u:[s,+\infty)\times\overline{\Rd_+}\to\R$ is defined by
$u(t,x)=u_k(t,x)$, $k$ being any integer such that $(t,x)\in [s,s+k]\times \overline{B_k^+}$.
Clearly, $u\in C^{1+\alpha/2,2+\alpha}_{\rm loc}([s,+\infty)\times\overline{\Rd_+})$ is a bounded classical solution to problem \eqref{NADir} and it satisfies \eqref{estinf} thanks to \eqref{est_inf_eps}.

{\em Step 3.} We now fix $f\in C_{\mathcal D}(\Rd_+)$ which tends to zero at infinity, and consider a sequence $(f_n)\subset C^{2+\alpha}_c(\Rd_+)$ converging to $f$ uniformly in $\Rd_+$.
By Step 1, for any $n\in\N$, the Cauchy problem \eqref{NADir}, with $f$ being replaced by $f_n$ admits a unique solution
$u_n\in C^{1+\alpha/2,2+\alpha}_{\rm loc}([s,+\infty)\times\overline{\Rd_+})$.
Interior Schauder estimates show that, for any $R,T>0$ and $\sigma<T-s$, there exists a positive constant $C$, independent of $n$, such that
$\|u_n\|_{C^{1+\alpha/2,2+\alpha}((s+\sigma,T)\times B_R^+)}\le C\|f\|_{\infty}$ for  any $n\in\N$.
Arguing as in Step 1, we prove that $u_n$ converges to a function $u$ which belongs to $C^{1+\alpha/2,2+\alpha}_{\rm loc}((s,+\infty)\times\overline{\Rd_+})$, vanishes on $[s,+\infty)\times\partial\Rd_+$ and
satisfies $D_tu=\A(t)u$ in $(s,+\infty)\times\Rd_+$. Moreover, since
$\|u_n(t,\cdot)-u_m(t,\cdot)\|_{\infty}\le e^{-c_0(t-s)}\|f_n-f_m\|_{\infty}$ for any $t>s$ and 
any $m,n\in\N$, $u$ actually belongs to $C_b([s,+\infty)\times\overline{\Rd_+})$ and $u(s,\cdot)=f$ since $u_n(s,\cdot)=f$ for any $n\in\N$.
Hence, $u$ is a classical solution to problem \eqref{NADir} and, of course, it satisfies estimate \eqref{estinf}.

{\em Step 4.} Finally, we deal with the general case when $f\in C_b(\Rd_+)$  and denote by $u_g$ the unique solution to problem \eqref{NADir} with initial datum $g\in C_{\mathcal D}(\Rd_+)$ which tends to zero at infinity. We consider a sequence of functions $(f_n)\in C^{2+\alpha}_c(\Rd_+)$ converging to $f$ locally uniformly in $\Rd_+$ and such that
$M:=\sup_{n\in\N}\|f_n\|_{\infty}<+\infty$. The already used compactness argument shows that, up to a subsequence, $u_{f_n}$ converges in $C^{1,2}_{\rm loc}((s,+\infty)\times\overline{\Rd_+})$ to
a function $u\in C^{1+\alpha/2,2+\alpha}_{\rm loc}((s,+\infty)\times\overline{\Rd_+})$. Hence, $u$ satisfies the differential equation, the boundary condition in \eqref{NADir},
and also the estimate \eqref{estinf}.

To complete the proof, we show that $u$ is continuous also on $\{s\}\times\Rd_+$  and $u(s,\cdot)=f$ in $\Rd_+$.
For this purpose, we fix a compact set $K\subset \Rd_+$ and a smooth and compactly supported function $\psi$
such that $0\leq \psi \leq 1$ and $\psi\equiv 1$ in $K$. Since $f_n=\psi f_n+(1-\psi)f_n$ for every $n \in \N$, by linearity
$u_{f_n}=u_{\psi f_n}+ u_{(1-\psi)f_n}$.
We know that the functions $u_{\psi f_n}$ and $u_{\psi}$
are continuous up to $s$ where they are equal to $\psi f_n$ and
$\psi$ respectively. Proposition \ref{nsMP}(i) and the positivity of $c$ yield
$\|u_{(1-\psi)f_n}\|_{\infty}\leq M( 1-u_{\psi})$ for any $n\in\N$.
Hence,
$|u_{f_n}- f| \leq |u_{\psi f_n}-\psi f|+M(1-u_{\psi})$
in $(s,+\infty)\times K$. Letting $n\to +\infty$ we obtain
$|u-f| \leq |u_{\psi f}-\psi f| +  M(1-u_{\psi})$
in the same set as above.
Now, it follows that $u$ can be extended by continuity at $t=s$ by setting $u(s,x)=f(x)$ for any $x\in K$.
By the arbitrariness of $K$ we deduce that $u$ is continuous on $\{s\}\times\Rd_+$ and $u(s,\cdot)=f$.
\medskip

The proof of the claim in the case of the Cauchy-Neumann problem \eqref{NANeu} follows the same lines of the Dirichlet case, taking into account that,
for any $f\in C_b(\overline{\Rd_+})$, the unique classical solution to the Cauchy-Neumann problem
\eqref{NANeu} with $\A(t)$ being replaced by $\A^{\varepsilon}(t)$ is the restriction to $\Rd_+$ of
the function $G^{\varepsilon}(\cdot,s){\mathcal E}f$, i.e.,
\begin{equation}
G^{\varepsilon}_{\mathcal N}(t,s)f=
(G^{\varepsilon}(t,s){\mathcal E}f)_{|\Rd_+},\qquad\;\,t>s.
\label{Neu_eps}
\end{equation}
Moreover, the continuity of the solution of problem \eqref{NANeu} on $\{s\}\times\overline{\Rd_+}$ follows from observing that, in the analogous of Step 4, we can consider a sequence of functions $f_n\in C^{2+\alpha}_c(\overline{\Rd_+})$ converging to $f$ locally uniformly in $\overline{\Rd_+}$, and the compact set $K$ can be a subset of $\overline{\Rd_+}$.
\end{proof}


In view of Theorem \ref{existence} we can define two families of
bounded linear operators $\{G_{\mathcal D}(t,s):\,t\ge s\in I\}$ in $C_b(\R^d_+)$ and
$\{G_{\mathcal N}(t,s):\,t\ge s\in I\}$ in $C_b(\overline{\R^d_+})$, by setting
\begin{align*}
&(G_{\mathcal D}(t,s)f)(x):= u_{\mathcal D}(t,x),\qquad\;\,(t,x)\in [s,+\infty)\times\R^d_+,\\
&(G_{\mathcal N}(t,s)f)(x):= u_{\mathcal N}(t,x),\qquad\;\,(t,x)\in [s,+\infty)\times\overline{\R^d_+}.
\end{align*}
The evolution laws
$G_{\mathcal D}(t,s)=G_{\mathcal D}(t,r)G_{\mathcal D}(r,s)$ and $G_{\mathcal N}(t,s)=G_{\mathcal N}(t,r)G_{\mathcal N}(r,s)$, for any $I\ni s\leq r\leq t$,
are immediate consequence of the uniqueness of the solutions to problems \eqref{NADir} and \eqref{NANeu}.
The families $\{G_{\mathcal D}(t,s):\,t\ge s\in I\}$ and $\{G_{\mathcal N}(t,s):\,t\ge s\in I\}$ are called the
{\emph{evolution operator associated to problem \eqref{NADir} and \eqref{NANeu}}}, respectively. In the sequel, to lighten the notation,
we simply write $G_{\mathcal D}(t,s)$ and $G_{\mathcal N}(t,s)$ to denote the previous two evolution operators.

In the following proposition we collect some useful properties of $G_{\mathcal D}(t,s)$ and $G_{\mathcal N}(t,s)$.

\begin{prop}\label{nonesiste}
Fix $s\in I$. The following statements are satisfied:
\begin{enumerate}[\rm (i)]
\item
if $(f_n)\subset C_b(\Rd_+)$ $($resp. $(f_n)\subset C_b(\overline{\Rd_+}))$
 is a bounded sequence, with respect to the sup-norm, which converges to $f\in C_b(\Rd_+)$ $($resp. $f\in C_b(\overline{\Rd_+}))$ locally uniformly in $\Rd_+$,
then $G_{\mathcal D}(\cdot,s)f_n$ $($resp.
$G_{\mathcal N}(\cdot,s)f_n)$
converges to $G_{\mathcal D}(\cdot,s)f$ $($resp. $G_{\mathcal N}(\cdot,s)f)$ in $C^{1,2}_{\rm loc}((s,+\infty)\times\overline{\Rd_+})$;
\item
if $f\in C_b(\overline{\Rd_+})$ is nonnegative, then $G_{\mathcal D}(t,s)f\leq G_{\mathcal N}(t,s)f$ for every $t>s$.
\end{enumerate}
\end{prop}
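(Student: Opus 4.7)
For \emph{part (ii)}, set $v := G_{\mathcal{N}}(\cdot,s)f - G_{\mathcal{D}}(\cdot,s)f$. It is bounded and satisfies $\partial_t v = \A(t)v$ in $(s,+\infty)\times\Rd_+$, $v(s,\cdot) = 0$, and $v = G_{\mathcal{N}}(t,s)f \ge 0$ on $(s,+\infty)\times\partial\Rd_+$ by the positivity part of Theorem \ref{existence}. The Dirichlet maximum principle (Proposition \ref{nsMP}(i)), applied to $-v$, then yields $v \ge 0$, which is the required inequality.

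For \emph{part (i)}, my plan is first to harvest compactness. Writing $u_n := G_{\mathcal{D}}(\cdot,s)f_n$ and $M := \sup_n \|f_n\|_\infty$, estimate \eqref{estinf} gives $\|u_n\|_\infty \le M$ uniformly in $n$, and interior Schauder estimates up to the boundary, exactly as in Step~4 of the proof of Theorem \ref{existence}, produce a bound $\|u_n\|_{C^{1+\alpha/2,2+\alpha}(K)} \le c_K M$ on every compact $K \subset (s,+\infty)\times\overline{\Rd_+}$, independently of $n$. Arzel\`a--Ascoli with a diagonal extraction then yields a subsequence converging in $C^{1,2}_{\rm loc}((s,+\infty)\times\overline{\Rd_+})$ to a bounded function $\tilde u$ which satisfies the PDE in $(s,+\infty)\times\Rd_+$ and vanishes on $(s,+\infty)\times\partial\Rd_+$. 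It remains to show that $\tilde u$ extends continuously to $\{s\}\times\Rd_+$ with value $f$: once this is granted, the uniqueness part of Theorem \ref{existence} (via Proposition \ref{nsMP}(i)) identifies $\tilde u$ with $G_{\mathcal{D}}(\cdot,s)f$, and the usual subsequence-of-subsequence argument forces convergence of the whole sequence $(u_n)$.

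The main obstacle is precisely this identification at $t = s$, because only locally uniform convergence $f_n \to f$ is assumed. I would handle it with the cutoff device already used in Step~4 of Theorem \ref{existence}. Fix $x_0 \in \Rd_+$, a compact neighbourhood $K \subset \Rd_+$ of $x_0$, and $\psi \in C_c^\infty(\Rd_+)$ with $0\le\psi\le 1$ and $\psi \equiv 1$ on $K$, and split
\[
u_n(t,x) = G_{\mathcal{D}}(t,s)(\psi f_n)(x) + G_{\mathcal{D}}(t,s)((1-\psi)f_n)(x).
\]
Since $\psi$ is compactly supported and $f_n \to f$ locally uniformly, $\psi f_n \to \psi f$ uniformly in $\Rd_+$; as $\psi f \in C_c(\Rd_+) \subset C_{\mathcal{D}}(\Rd_+)$, the first summand converges uniformly to $G_{\mathcal{D}}(\cdot,s)(\psi f)$, which is continuous up to $t=s$ by Theorem \ref{existence}. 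The second summand is controlled by the explicit supersolution $V := M(1 - u_\psi)$ with $u_\psi := G_{\mathcal{D}}(\cdot,s)\psi$: a direct computation, using that $u_\psi$ solves the parabolic equation and that $\A(t)$ applied to the constant $1$ equals $-c(t,\cdot)$, gives $\partial_t V - \A(t)V = Mc \ge Mc_0 > 0$, while $V(s,\cdot) = M(1-\psi) \ge |(1-\psi)f_n|$ and $V = M \ge 0$ on $\partial\Rd_+$, so Proposition \ref{nsMP}(i) yields $|G_{\mathcal{D}}(t,s)((1-\psi)f_n)| \le V$. Passing to the limit along the converging subsequence and using $\psi(x_0) = 1$ together with the continuity at $t=s$ of both $G_{\mathcal{D}}(\cdot,s)(\psi f)$ and $u_\psi$, one concludes $\tilde u(t,x) \to f(x_0)$ as $(t,x) \to (s,x_0)$. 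The Neumann case is entirely parallel, with $K \subset \overline{\Rd_+}$, $\psi \in C^\infty_c(\overline{\Rd_+})$, Proposition \ref{nsMP}(ii) replacing \ref{nsMP}(i), and the analogous supersolution built from $G_{\mathcal{N}}(\cdot,s)\psi$.
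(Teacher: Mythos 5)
Your proof is correct and follows the same route as the paper: part (ii) is the paper's maximum-principle argument (the paper takes $v = G_{\mathcal D}(\cdot,s)f - G_{\mathcal N}(\cdot,s)f$ and shows $v\le 0$, which is your computation with the sign flipped), and part (i) is exactly what the paper invokes when it says the statement ``is a byproduct of Step 4 in the proof of Theorem \ref{existence}'' --- the compactness via interior Schauder estimates, extraction of a $C^{1,2}_{\rm loc}$-limit, identification at $t=s$ via the cutoff decomposition $u_n = G_{\mathcal D}(\cdot,s)(\psi f_n) + G_{\mathcal D}(\cdot,s)((1-\psi)f_n)$ with the supersolution barrier $M(1-u_\psi)$, and the subsequence-of-subsequences closure. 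You have merely made explicit the argument the paper leaves implicit.
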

\begin{proof}
Property (i) with ${\mathcal I}={\mathcal D}$ is a byproduct of Step 4 in the proof of Theorem \ref{existence}, and, when $\mathcal J=\mathcal N$,
its proof is completely similar.

Let us prove property (ii). Fix a nonnegative function $f\in C_b(\overline{\Rd_+})$.
By Theorem \ref{existence}, $G_{\mathcal N}(\cdot,s)f$ is nonnegative in $(s,+\infty)\times\overline{\Rd_+}$.
Now, we consider the function $v=G_{\mathcal D}(\cdot,s)f-G_{\mathcal N}(\cdot,s)f$. Clearly,
$v\in C^{1,2}((s,+\infty)\times\overline{\R^d_+})\cap C_b(([s,+\infty)\times\overline{\R^d_+})\setminus (\{s\}\times\partial\Rd_+))$, it solves $D_t v-\mathcal{A}(t)v=0$ in $(s,+\infty)\times \Rd_+$ and $v\equiv 0$ in $\{s\}\times \Rd_+$. Finally, since $v=-G_{\mathcal N}(\cdot,s)f\le 0$ in $(s,+\infty)\times\partial \Rd_+$,
we conclude, using Proposition \ref{nsMP}, that $v\le 0$ in $(s,+\infty)\times \Rd_+$, i.e., $G_{\mathcal D}(\cdot,s)f\le G_{\mathcal N}(\cdot,s)f$.
\end{proof}

\section{Gradient estimates}
\label{sect-3}
In this section we provide both pointwise and uniform (spatial) gradient estimates for the functions $G_{\mathcal D}(t,s)f$ and $G_{\mathcal N}(t,s)f$, when $f\in C_b(\Rd_+)$ and $f\in C_b(\overline{\Rd_+})$, respectively, and when $f$ is even much smoother.
If not otherwise specified, throughout this section we assume that the following conditions are satisfied.

\begin{hyp}\label{hyp2}
\begin{enumerate}[\rm (i)]
\item
Hypotheses $\ref{hyp1}$ are satisfied;
\item
there exist a continuous function $\beta:I\times \Rd_+\to [0,+\infty)$ and
a positive constant $k_2$ such that
\begin{align}\label{cond-deriv-c}
\;\;\;\;\;\;\;\;(i)~\beta\leq k_2 c,\qquad\;\, (ii)~|\nabla_xc|\le \beta,\quad\;\,{\rm in}\; I\times\Rd_+;
\end{align}
\item
for every $p\in(1,+\infty)$ there exists a positive constant $C_p$ such that
\begin{align}
\;\;\;\;\;\;\;r+\left(\frac{k_1^2d^2}{4M_p}-M_p\right)\eta-\left (1-\frac{1}{p}\right )c+\frac{pk_2}{4(p-1)}\beta\leq C_p,\,
\label{cond-r-eta-c}\end{align}
in $I\times \Rd_+$, where $M_p=\min\{1,p-1\}$.
\end{enumerate}
\end{hyp}

\begin{rmk}
{\rm Note that the coefficients $q^{\varepsilon}_{ij}$, $b_{i}^\varepsilon$ ($i,j=1,\ldots,d$) and $c^{\varepsilon}$ satisfy Hypotheses \ref{hyp2}(ii)-(iii) with
$\Rd_+$ being replaced by $\Rd$, with the same constants $k_1$, $k_2$, $M_p$, $C_p$, and with $\eta$, $r$ and $\beta$ being replaced by $\eta^{\varepsilon}$, $r^{\varepsilon}$ and
 $\beta^{\varepsilon}:=({\mathcal E}\beta)^{\varepsilon}$, respectively.}
 \end{rmk}

\subsection{$C^1$-$C^1$ uniform and pointwise estimates}
As it has been already remarked in the introduction, we can not expect an estimate where $|\nabla_xG_{\mathcal D}(t,s)g|^p$ is controlled from above by $G_{\mathcal D}(t,s)(|g|^p+|\nabla g|^p)$ since this latter vanishes on $\partial \Rd_+$. However, for any function $g\in C^1_{\mathcal D}(\Rd_+)$, we can estimate $|\nabla_x G_{\mathcal D}(t,s)g|^p$ by means of $G_{\mathcal N}(t,s)(|g|^p+|\nabla g|^p)$,
 as the next theorem shows.

\begin{thm}\label{Dir_thm}
The pointwise gradient estimate
\begin{eqnarray}\label{GD_GN}
|\nabla_x G_{\mathcal I}(t,s)f|^p\leq  2^{\ell_p+p-1}e^{C_p(t-s)}G_{\mathcal N}(t,s)(|f|^p+|\nabla f|^p),
\end{eqnarray}
holds in $\overline{\Rd_+}$ for any $t>s\in I$, any $p\in (1,+\infty)$ and any $f\in C^1_b(\overline{\Rd_+})$, when ${\mathcal I}={\mathcal N}$, and any $f\in C_{\mathcal D}^1(\Rd_+)$
when ${\mathcal I}={\mathcal D}$.  Here,
$C_p$ is the constant in \eqref{cond-r-eta-c} and $\ell_p=\max\{p/2-1,1\}$.
\end{thm}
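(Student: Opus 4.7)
The plan is to run a Bernstein-type argument on the regularized evolution operators $G^\varepsilon(t,s)$ supplied by Proposition \ref{prop_cup}, exploit the reflection identities \eqref{Dir_eps}--\eqref{Neu_eps}, and then send $\varepsilon\to 0^+$. First I would approximate $f$ by a sequence $(f_n)$ in $C^{2+\alpha}_c(\overline{\R^d_+})$ so that the relevant reflection is of class $C^{2+\alpha}(\R^d)$: in the Dirichlet case take $g_n:={\mathcal O}f_n$ with $f_n$ vanishing on $\partial\R^d_+$ (this is automatic from $f\in C^1_{\mathcal D}$); in the Neumann case, require in addition $D_d f_n\equiv 0$ on $\partial\R^d_+$, so that $g_n:={\mathcal E}f_n\in C^{2+\alpha}(\R^d)$. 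Set $u^\varepsilon_n:=G^\varepsilon(\cdot,s)g_n$, so $u^\varepsilon_n\in C^{1+\alpha/2,2+\alpha}_{\rm loc}([s,+\infty)\times\R^d)$.

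\textbf{The Bernstein computation.} For $\delta>0$ introduce the auxiliary function
\begin{align*}
v^\varepsilon_{n,\delta}(t,x)=\bigl[(u^\varepsilon_n(t,x))^2+\delta\bigr]^{p/2}+a(t)\bigl[|\nabla u^\varepsilon_n(t,x)|^2+\delta\bigr]^{p/2},\qquad a(t):=e^{-C_p(t-s)},
\end{align*}
and compute $D_tv^\varepsilon_{n,\delta}-\mathcal{A}^\varepsilon(t)v^\varepsilon_{n,\delta}$. The computation for the $u^2$-block proceeds as in Proposition \ref{prop_cup} and produces a nonpositive contribution whose polynomial part in $c^\varepsilon$ gives the $-(1-1/p)c$ summand. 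The $|\nabla u|^2$-block is handled by differentiating the equation and using the commutator
\begin{align*}
D_k(\mathcal{A}^\varepsilon u^\varepsilon_n)-\mathcal{A}^\varepsilon D_k u^\varepsilon_n=\sum_{i,j}(D_k q^\varepsilon_{ij})D_{ij}u^\varepsilon_n+\sum_{i}(D_k b^\varepsilon_i)D_iu^\varepsilon_n-(D_kc^\varepsilon)u^\varepsilon_n,
\end{align*}
then absorbing the cross term between $D^2u^\varepsilon_n$ and $\nabla u^\varepsilon_n$ into the ellipticity quadratic form $\langle Q^\varepsilon D^2u^\varepsilon_n,D^2u^\varepsilon_n\rangle\ge\eta^\varepsilon|D^2u^\varepsilon_n|^2$ by a weighted Young inequality with weight $M_p=\min\{1,p-1\}$ (this weight is the natural one because for $1<p<2$ the $(p-2)$ factor in $D^2\phi^{p/2}$ produces a sign change that has to be compensated). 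The bound \eqref{der_qij} for $\nabla_xq^\varepsilon_{ij}$ generates the coefficient $k_1^2d^2/(4M_p)$; the dissipativity \eqref{dissipativity}(ii) gives the $r$ term; the cross term between $u^\varepsilon_n$ and $\nabla u^\varepsilon_n$ coming from $\nabla_xc^\varepsilon$ is controlled via \eqref{cond-deriv-c} and another Young inequality, producing the summand $\frac{pk_2}{4(p-1)}\beta$. Summing everything, the precise choice $a(t)=e^{-C_p(t-s)}$ together with \eqref{cond-r-eta-c} gives
\begin{align*}
D_tv^\varepsilon_{n,\delta}-\mathcal{A}^\varepsilon(t)v^\varepsilon_{n,\delta}\le K_p\delta^{p/2},
\end{align*}
with $K_p$ depending only on $p$ and on the constants in Hypotheses \ref{hyp2}.

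\textbf{Maximum principle, limits and conversion to $G_{\mathcal I}$.} Since $v^\varepsilon_{n,\delta}$ is bounded on $[s,T]\times\R^d$ for any $T>s$, the maximum principle in Proposition \ref{nsMP} applied to $w:=v^\varepsilon_{n,\delta}-G^\varepsilon(\cdot,s)v^\varepsilon_{n,\delta}(s,\cdot)-K_p\delta^{p/2}(t-s)/c_0$ yields $v^\varepsilon_{n,\delta}(t,x)\le G^\varepsilon(t,s)v^\varepsilon_{n,\delta}(s,\cdot)(x)+o(1)$ as $\delta\to 0^+$. Letting $\delta\to 0^+$ and discarding the nonnegative $|u^\varepsilon_n|^p$ term on the left-hand side, we arrive at
\begin{align*}
|\nabla u^\varepsilon_n(t,x)|^p\le e^{C_p(t-s)}G^\varepsilon(t,s)(|g_n|^p+|\nabla g_n|^p)(x),\qquad x\in\R^d.
\end{align*}
Restricting to $x\in\R^d_+$ and using the pointwise identities $|{\mathcal O}f_n|^p={\mathcal E}(|f_n|^p)$, $|\nabla{\mathcal O}f_n|^p={\mathcal E}(|\nabla f_n|^p)$ (and similarly with $\mathcal E$ in place of $\mathcal O$), the right-hand side becomes $e^{C_p(t-s)}G^\varepsilon_{\mathcal N}(t,s)(|f_n|^p+|\nabla f_n|^p)(x)$ via \eqref{Neu_eps}, while the left-hand side is $|\nabla G^\varepsilon_{\mathcal I}(t,s)f_n(x)|^p$ by \eqref{Dir_eps}--\eqref{Neu_eps}. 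Finally I would pass to the limit $\varepsilon\to 0^+$ by invoking the uniform Schauder estimates used in Step 2 of the proof of Theorem \ref{existence}, and then let $n\to\infty$ using Proposition \ref{nonesiste}(i). The factor $2^{\ell_p+p-1}$ is then absorbed when one rewrites sums of $p/2$-powers as $p$-powers through the elementary inequality $(a+b)^{p/2}\le 2^{\ell_p}(a^{p/2}+b^{p/2})$ applied at both endpoints of the argument. The main obstacle is the Bernstein computation itself: balancing the two $p/2$-power blocks of $v^\varepsilon_{n,\delta}$, tracking the $M_p$-dependent Young-inequality weights, and checking that the remainder terms produced by $\nabla_x q^\varepsilon_{ij}$, $\nabla_x b^\varepsilon$, and $\nabla_xc^\varepsilon$ combine into exactly the threshold quantity appearing in \eqref{cond-r-eta-c}.
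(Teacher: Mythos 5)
The general architecture (Bernstein estimate on $G^\varepsilon$, reflection via $\mathcal O/\mathcal E$, then $\varepsilon\to 0^+$) matches the paper, but the core Bernstein computation you propose does not close, for two independent reasons.

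\textbf{The split auxiliary function cannot absorb the $\nabla_xc^\varepsilon$ cross-term.} The paper works with the single function $w=[u^2+|\nabla_xu|^2]^{p/2}$ precisely because then every quadratic term ($u^2$, $|\nabla_xu|^2$, $|D^2_xu|^2$, and the mixed term $u|\nabla_xu|$ coming from $\nabla_xc^\varepsilon$) appears with the common weight $w^{1-2/p}$; in particular, after a Young inequality the piece $a_2\,\beta^\varepsilon u^2\,w^{1-2/p}$ is compensated exactly by the $c$-contribution $-(p-1)c^\varepsilon u^2\,w^{1-2/p}$ of $\psi_{4,p}$, and \eqref{cond-r-eta-c} makes the $|\nabla_xu|^2$-coefficient nonpositive. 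With your split function
\begin{align*}
v^\varepsilon_{n,\delta}=[u^2+\delta]^{p/2}+a(t)\,[|\nabla_xu|^2+\delta]^{p/2},
\end{align*}
the cross-term $-u\langle\nabla_xc^\varepsilon,\nabla_xu\rangle$ lives in the \emph{gradient block} and therefore carries the weight $[|\nabla_xu|^2+\delta]^{p/2-1}$; after Young it produces a piece $\sim\beta^\varepsilon u^2\,[|\nabla_xu|^2+\delta]^{p/2-1}$. The only negative $c^\varepsilon u^2$-term available is $-(p-1)c^\varepsilon u^2[u^2+\delta]^{p/2-1}$ from the \emph{$u^2$-block}, with the incompatible weight $[u^2+\delta]^{p/2-1}$. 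For $p\ne 2$ the ratio $[|\nabla_xu|^2+\delta]^{p/2-1}/[u^2+\delta]^{p/2-1}$ is unbounded (e.g.\ $|\nabla_xu|\gg u$ when $p>2$), so the absorption fails and no choice of $a(t)$ rescues it. The choice of the combined $p/2$-power is not cosmetic; it is the key algebraic device.

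\textbf{The $\delta$-regularization is incompatible with unbounded $c^\varepsilon$.} Acting with $\mathcal A^\varepsilon$ on $[u^2+\delta]^{p/2}$ produces the zero-order contribution $c^\varepsilon[u^2+\delta]^{p/2}$, which for $\delta>0$ leaves a residual term $c^\varepsilon\,\delta\,[u^2+\delta]^{p/2-1}$ (and analogously in the gradient block) that does \emph{not} vanish as $u\to 0$ and is not bounded by $K_p\delta^{p/2}$ because $c^\varepsilon$ is unbounded. Your claimed inequality $D_tv^\varepsilon_{n,\delta}-\mathcal A^\varepsilon v^\varepsilon_{n,\delta}\le K_p\delta^{p/2}$ therefore cannot hold with a constant $K_p$. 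The paper avoids the $\delta$-regularization entirely by first proving the estimate for $f$ positive and constant outside a compact set, which forces $u^\varepsilon_n\ge\delta>0$ and keeps $w$ bounded away from zero, and only afterwards passes to signed data by splitting $f=f^+-f^-$ and mollifying.

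\textbf{The maximum principle step is also not in place.} You apply Proposition \ref{nsMP} directly on $(s,T]\times\R^d$ to $v^\varepsilon_{n,\delta}$, asserting that this function is bounded; but this already presupposes a uniform bound on $|\nabla_x u^\varepsilon_n|$ on $\R^d$, which is precisely the kind of statement the Bernstein argument is meant to deliver. The paper sidesteps this circularity by first running the computation on the bounded, convex domain $B_n$ with homogeneous Neumann boundary conditions (where Schauder estimates supply the needed $C^{1,2}$ bounds and the convexity of $B_n$ guarantees $\partial_\nu w\le 0$ on $\partial B_n$), and then letting $n\to+\infty$ using the convergence $G^\varepsilon_{\mathcal N,n}(t,s)f\to G^\varepsilon(t,s)f$ in $C^{1,2}_{\rm loc}$. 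That step is missing from your proposal and is not a cosmetic one.

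In short, you have the right global skeleton, but the choice of auxiliary function, the treatment of the lower-order $\delta$-term, and the domain on which the maximum principle is applied all have to be redone along the paper's lines: use $w=[u^2+|\nabla_xu|^2]^{p/2}$ for positive data (no $\delta$), prove $w_t-\mathcal A^\varepsilon w\le pC_p w$ on $B_n$ with Neumann boundary conditions, invoke the classical maximum principle there, let $n\to+\infty$ and then remove the positivity assumption by a mollification/splitting argument, and only afterwards reflect and let $\varepsilon\to 0^+$.
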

\begin{proof}
The core of the proof  consists in proving the gradient estimate
\begin{align}\label{grad_est_rd}
|\nabla_x G^\varepsilon (t,s)f|^p\leq 2^{\ell_p+p-1}e^{C_p(t-s)}G^\varepsilon(t,s)(|f|^p+|\nabla f|^p),
\end{align}
in $\Rd$ for any $t>s$, any function $f\in C^{3+\alpha}_c(\Rd)$ and any $\varepsilon\in (0,1]$.
This estimate is obtained in Steps 1 and 2. More precisely, in Step 1 we prove that, for any $t>s\in I$ and any $n\in\N$,
\begin{align}\label{grad_est_rd-0}
|\nabla_x G^\varepsilon_{{\mathcal N},n} (t,s)f|^p\leq 2^{\ell_p}e^{C_p(t-s)}G^\varepsilon_{{\mathcal N},n}(t,s)(|f|^p+|\nabla f|^p),
\end{align}
in $B_n$ for positive functions $f\in C^{3+\alpha}(\Rd)$ which are constant outside a compact set contained in $B_n$, where $G^{\varepsilon}_{{\mathcal N},n}(t,s)$ denotes the evolution operator
associated to the restriction of the operator $\mathcal{A}^{\varepsilon}(t)$ (see \eqref{A-varepsilon}) to $B_n$, with homogeneous Neumann boundary conditions. In the second one, we complete the proof of \eqref{grad_est_rd}.
Finally, in Step 3, we prove \eqref{GD_GN}.

{\em Step 1.}
Fix $s \in I$ and $\varepsilon\in (0,1]$; for any positive function $f\in C^{3+\alpha}(\Rd)$, which is constant outside the ball $B_{n_0}$, we set
$u^{\varepsilon}_n: =G^{\varepsilon}_{{\mathcal N},n}(\cdot,s)f$ for any $n> n_0$
 and consider the function
$w=[(u^{\varepsilon}_n)^2+|\nabla_x u^{\varepsilon}_n|^2]^{p/2}$ which belongs to $C_b([s,+\infty) \times\overline{B_n})\cap C^{1,2}((s,+\infty) \times B_n)$ (see Proposition \ref{smoothdatum}).
Since $u^{\varepsilon}_n(t,x)\ge\delta$ for any $t>s$, $x\in B_n$ and some $\delta>0$, $w$ has positive infimum in $(s,+\infty)\times B_n$. Moreover,
$w_t - \A(t)w = \psi_{1,p} + \psi_{2,p} + \psi_{3,p}+\psi_{4,p}$,
where
\begin{align}
\psi_{1,p}=& p w^{1-\frac{2}{p}}\bigg (\sum_{i,j,k=1}^d
D_kq_{ij}^{\varepsilon}D_ku^{\varepsilon}_nD_{ij}u^{\varepsilon}_n-u^{\varepsilon}_n
\langle\nabla_xc^{\varepsilon} ,\nabla_xu^{\varepsilon}_n\rangle\bigg ),
\label{psi-1}\\[3mm]
\psi_{2,p} =& -p(p-2)w^{1-\frac{4}{p}}
|\sqrt{Q^{\varepsilon}}(u^{\varepsilon}_n\nabla_xu^{\varepsilon}_n+D^2_xu^{\varepsilon}_n\nabla_xu^{\varepsilon}_n)|^2,
\label{psi-2}
\\[3mm]
\psi_{3,p} =& pw^{1-\frac{2}{p}}\bigg (\langle \nabla_x b^{\varepsilon}\, \nabla_x u^{\varepsilon}_n , \nabla_x u^{\varepsilon}_n \rangle
\hskip -.5truemm -\hskip -.5truemm |\sqrt{Q^{\varepsilon}}\nabla_xu_n^{\varepsilon}|^2
- \sum_{k=1}^d
|\sqrt{Q^{\varepsilon}}\nabla_x D_k u^{\varepsilon}_n|^2\bigg ),
\label{psi-3}\\
\psi_{4,p}=& -(p-1)c^{\varepsilon}w.
\label{psi-4}
\end{align}
We are going to prove that
\begin{align}
w_t - \A^{\varepsilon}(t)w \leq pC_p w.
\label{montecarlo-0}
\end{align}
For this purpose, we begin by observing that, from \eqref{der_qij-eps} and \eqref{cond-deriv-c} we obtain
\begin{align}
\psi_{1,p}
&\le pw^{1-\frac{2}{p}}\left(dk_1\eta^{\varepsilon}|\nabla_x u^{\varepsilon}_n| |D_x^2u^{\varepsilon}_n| + \beta^\varepsilon |u^{\varepsilon}_n||\nabla_x u^{\varepsilon}_n| \right)\notag\\
&\le pw^{1-\frac{2}{p}}\left [a_1dk_1\eta^{\varepsilon}|D_x^2u^{\varepsilon}_n|^2+
\left (\frac{dk_1}{4a_1}\eta^{\varepsilon}+\frac{\beta^{\varepsilon}}{4a_2}\right )
|\nabla_x u^{\varepsilon}_n|^2 + a_2\beta^{\varepsilon}(u^{\varepsilon}_n)^2\right ],
\label{fermagli}
\end{align}
for any $a_1,a_2>0$.

If $p\ge 2$, $\psi_{2,p}$ is nonpositive in $I\times B_n$.
Moreover, taking \eqref{quadr-forms} into account, we deduce that $\psi_{3,p}\le pw^{1-\frac{2}{p}}\big [( r^{\varepsilon}-\eta_{\varepsilon})\, |\nabla_x u^{\varepsilon}_n|^2 - \eta^{\varepsilon}\, |D_x^2u^{\varepsilon}_n|^2\big ]$. Hence, the above computations yield
\begin{align*}
&\psi_{1,p}+\psi_{2,p}+\psi_{3,p}+\psi_{4,p}\\
\le &w^{1-\frac{2}{p}}\bigg [p\left (a_1dk_1-1\right )\eta^{\varepsilon}|D_x^2u^{\varepsilon}_n|^2+(pa_2\beta^{\varepsilon}-(p-1)c^{\varepsilon})(u^{\varepsilon}_n)^2\\
&\qquad\;\;\,+\bigg (p r^{\varepsilon}+p\bigg (\frac{dk_1}{4a_1}-1\bigg )\eta^{\varepsilon}+p\frac{\beta^{\varepsilon}}{4a_2}-(p-1)c^{\varepsilon}\bigg )
|\nabla_x u^{\varepsilon}_n|^2\bigg ],
\end{align*}
for $p\ge 2$ and any $a_1, a_2>0$.
Choosing $a_1=M_p(dk_1)^{-1}$, $a_2=(p-1)(pk_2)^{-1}$, recalling that $\beta^{\varepsilon}\le k_2 c^{\varepsilon}$ and observing that condition \eqref{cond-r-eta-c} implies that
\begin{eqnarray*}
r^{\varepsilon}+\left(\frac{k_1^2d^2}{4M_p}-M_p\right)\eta^{\varepsilon}-\left (1-\frac{1}{p}\right )c^{\varepsilon}+\frac{pk_2}{4(p-1)}\beta^{\varepsilon}\leq C_p,
\end{eqnarray*}
we deduce that  $w_t - \A^{\varepsilon}(t)w
\le pC_p w^{1-\frac{2}{p}}|\nabla_x u^{\varepsilon}_n|^2\leq pC_p w$, i.e.,
\eqref{montecarlo-0} follows.

If $1<p<2$ we use the triangle inequality $|\sqrt{Q^{\varepsilon}}(\lambda+\mu)|\le |\sqrt{Q^{\varepsilon}}\lambda|+|\sqrt{Q^{\varepsilon}}\mu|$,
with $\lambda=u^{\varepsilon}_n\nabla_xu^{\varepsilon}_n$ and $\mu=D_x^2u^{\varepsilon}_n\nabla_xu^{\varepsilon}_n$ to estimate
\begin{align*}
\psi_{2,p}\le p(2\hskip -.5mm - \hskip -.5mm p)w^{1-\frac{4}{p}}\hskip -.5mm\Bigg [u^{\varepsilon}_n|\hskip -.5truemm\sqrt{Q^{\varepsilon}}\nabla_xu^{\varepsilon}_n|\hskip -1mm
+\hskip -1mm\Bigg (\hskip -.5mm\sum_{h,k=1}^d\hskip -1mm D_hu^{\varepsilon}_nD_ku^{\varepsilon}_n\langle Q^{\varepsilon}\nabla_xD_hu^{\varepsilon}_n,\nabla_xD_ku^{\varepsilon}_n\rangle\hskip -1mm\Bigg )^{\hskip -1truemm\frac{1}{2}}\Bigg ]^2.
\end{align*}
Using twice the Cauchy-Schwarz inequality we get
\begin{align*}
\psi_{2,p}\le &p(2-p)w^{1-\frac{4}{p}}\Bigg [u^{\varepsilon}_n |\sqrt{Q^{\varepsilon}}\nabla_xu^{\varepsilon}_n|
+|\nabla_xu^{\varepsilon}_n|\Bigg (\sum_{h=1}^d|\sqrt{Q^{\varepsilon}}\nabla_xD_hu^{\varepsilon}_n|^2\Bigg )^{\frac{1}{2}}
\Bigg ]^2\nonumber\\
\le & p(2-p)w^{1-\frac{2}{p}}\left (
|\sqrt{Q^{\varepsilon}}\nabla_xu^{\varepsilon}_n|^2+\sum_{h=1}^d|\sqrt{Q^{\varepsilon}}\nabla_xD_hu^{\varepsilon}_n|^2\right ).
\end{align*}
It thus follows that
\begin{align}
\psi_{2,p}+\psi_{3,p}\le &pw^{1-\frac{2}{p}}\Bigg [\langle \nabla_xb^{\varepsilon}\nabla_xu^{\varepsilon}_n,\nabla_xu^{\varepsilon}_n\rangle\notag\\
&\qquad\;\;\;\;\;\;+(1-p)\left (|\sqrt{Q^{\varepsilon}}\nabla_xu^{\varepsilon}_n|^2+\sum_{h=1}^d|\sqrt{Q^{\varepsilon}}\nabla_xD_hu^{\varepsilon}_n|^2\right )\Bigg ]\notag\\
\le & pw^{1-\frac{2}{p}}\left [ (r^{\varepsilon}+(1-p)\eta^{\varepsilon})|\nabla_xu^{\varepsilon}_n|^2+(1-p)\eta^{\varepsilon}|D^2_xu^{\varepsilon}_n|^2\right ].
\label{colors}
\end{align}
 From \eqref{fermagli}-\eqref{colors} we get inequality  \eqref{montecarlo-0} also in the case $p\in (1,2)$.

To complete the proof of \eqref{grad_est_rd-0}, we denote by $\nu$ the unit exterior normal vector to $\partial B_n$ and observe that
\begin{eqnarray*}
\frac{\partial w}{\partial\nu}(t,x)=p(w(t,x))^{1-\frac{2}{p}}\langle D^2_xu_n^{\varepsilon}(t,x)\nabla_xu_n^{\varepsilon}(t,x),\nu(x)\rangle,\quad\;\,(t,x)\in (s,+\infty)\times\partial B_n,
\end{eqnarray*}
which is nonpositive since the domain is convex (see e.g., \cite[Sect. V.B]{GaHuLa90RU}). Hence, the function  $v = w-e^{C_p(\cdot-s)}G^{\varepsilon}_{{\mathcal N},n}(\cdot,s) (f^2+|\nabla f|^2)^{\frac{p}{2}}$ satisfies
$v_t - (\A^{\varepsilon}(t)+C_p)v\le 0$ in $(s,+\infty)\times B_n$, it vanishes on $\{s\}\times\overline{B_n}$ and
its normal derivative is nonpositive in $(s,+\infty)\times\partial B_n$.
The classical maximum principle
implies that  $v\le 0$, and estimate \eqref{grad_est_rd-0} follows at once.

{\em Step 2.}
In view of \cite[Thm. 2.3(i)]{AngLor10Com}, $G^{\varepsilon}_{{\mathcal N},n}(t,s)f$ converges to $G^{\varepsilon}(t,s)f$ in $C^{1,2}(D)$ for any compact set $D\subset (s,+\infty)\times\Rd$. Hence, letting
$n\to +\infty$ in \eqref{grad_est_rd-0} we get
\begin{align}\label{grad_est_rd-1}
|\nabla_x G^\varepsilon(t,s)f|^p\leq 2^{\ell_p}e^{C_p(t-s)}(G^\varepsilon(t,s)(|f|^p+|\nabla f|^p),
\end{align}
for any $t>s$ and any positive function $f$ as in Step 1.

Estimate \eqref{grad_est_rd-1} can be extended easily to any nonnegative $f\in C^{3+\alpha}_c(\Rd)$ by a density argument, approximating $f$
by the sequence of function $(f_n)$ defined by $f_n=f+\frac{1}{n}$ for any $n\in\N$.


For a general function $f\in C^{3+\alpha}_c(\Rd)$, we split $f=f^+-f^-$, where $f^{\pm}=\max\{\pm f,0\}$. Clearly, $f^+$ and $f^-$ are bounded and Lipschitz continuous in $\Rd$.
Moreover, $\nabla f^+=\chi_{\{f>0\}}\nabla f$ and $\nabla f^-=\chi_{\{f<0\}}\nabla f$. In particular, $|\nabla f^{\pm}|\le |\nabla f|$.
Let us consider the sequences $(g_n^+)$ and $(g_n^-)$, where $g_n^{\pm}:=f^{\pm}\star\rho_{1/n}$ for any $n\in\N$, and $(\rho_{1/n})$ is a standard sequence of mollifiers.
Notice that $g_n^{\pm}\in C^1_b(\Rd)$ are nonnegative, for any $n\in\N$, and converge uniformly in $\Rd$ to $f^{\pm}$ as $n\to +\infty$. Moreover, up to a subsequence, we can assume that
$\nabla g_n^{\pm}$ converge pointwise  a.e. in $\Rd$ to $\nabla f^{\pm}$ as $n\to +\infty$. Hence, we can write
\begin{equation}\label{con_n-g}
|\nabla_x G^\varepsilon (t,s)g_n^{\pm}|^p\leq 2^{\ell_p}e^{C_p(t-s)}G^\varepsilon(t,s)[(g_n^{\pm})^p+|\nabla g_n^{\pm}|^p],
\end{equation}
for any $t>s$ and any $n\in\N$. Arguing as above we can show that $|\nabla_x G^\varepsilon (t,s)g_n^{\pm}|$ converges to
$|\nabla_x G^\varepsilon (t,s)f^{\pm}|$, locally uniformly in $\Rd$.
Similarly, using \eqref{adesso} and dominated convergence we conclude that also the right-hand side of  \eqref{con_n-g} tends to
\begin{align*}
2^{p-1}e^{C_p(t-s)}\int_{\Rd}((f^{\pm}(y))^p+|\nabla f^{\pm}(y)|^p)g^{\varepsilon}(t,s,x,y)dy,
\end{align*}
pointwise in $\Rd$.
Therefore,
\begin{align*}
|\nabla_x G^\varepsilon (t,s)f^{\pm}|^p\leq &2^{\ell_p}e^{C_p(t-s)}\int_{\Rd}((f^{\pm}(y))^p+|\nabla f^{\pm}(y)|^p)g^{\varepsilon}(t,s,x,y)dy\\
\le & 2^{\ell_p}e^{C_p(t-s)}G^\varepsilon(t,s)(|f|^p+|\nabla f|^p),
\end{align*}
for any $t>s$.
This estimate yields \eqref{grad_est_rd}.

{\em Step 3.}
Fix $t>s\in I$. First we assume ${\mathcal I}={\mathcal D}$ and fix $f\in C^{3+\alpha}_c(\Rd_+)$. Applying \eqref{grad_est_rd-1} with $f$ being replaced by ${\mathcal O}f\in C_c^{3+\alpha}(\Rd)$ and taking \eqref{Dir_eps} into account, we deduce that
\begin{equation}
|\nabla_x  G^\varepsilon_{\mathcal D}(t,s)f|^p\leq  2^{\ell_p+p-1}e^{C_p(t-s)}G^\varepsilon_{\mathcal N}(t,s)(|f|^p+|\nabla f|^p),
\label{epsilon}
\end{equation}
If $p\ge 2$, by Step 2 in the proof of Theorem \ref{existence}, we can let $\varepsilon\to 0^+$ in \eqref{epsilon} and obtain \eqref{GD_GN}
for functions in $C^{3+\alpha}_c(\Rd_+)$.
On the other hand, if $p\in (1,2)$, the function $|f|^p+|\nabla f|^p$ is not in $C^{2+\alpha}_c(\Rd)$. Anyway, 
$G^\varepsilon_{\mathcal N}(t,s)(|f|^p+|\nabla f|^p)$ still converges to $G_{\mathcal N}(t,s)(|f|^p+|\nabla f|^p)$ as $\varepsilon\to 0^+$. Indeed, we can approximate the function $|f|^p+|\nabla f|^p$ uniformly in $\Rd_+$ by a sequence of functions $g_n\in C^{2+\alpha}_c(\Rd_+)$.
Since
\begin{align*}
&\|G^\varepsilon_{\mathcal N}(t,s)(|f|^p+|\nabla f|^p)-G_{\mathcal N}(t,s)(|f|^p+|\nabla f|^p)\|_{L^{\infty}(K)}\\
\le & \|G^\varepsilon_{\mathcal N}(t,s)(|f|^p+|\nabla f|^p)-G^{\varepsilon}_{\mathcal N}(t,s)g_n\|_{L^{\infty}(K)}
+\|G^\varepsilon_{\mathcal N}(t,s)g_n-G_{\mathcal N}(t,s)g_n\|_{L^{\infty}(K)}\\
&+\|G_{\mathcal N}(t,s)(|f|^p+|\nabla f|^p)-G_{\mathcal N}(t,s)g_n\|_{L^{\infty}(K)},
\end{align*}
for any $n\in\N$ and any compact set $K\subset\Rd_+$, from \eqref{est_inf_eps} and \eqref{estinf}(i), we can estimate
\begin{align*}
&\|G^\varepsilon_{\mathcal N}(t,s)(|f|^p+|\nabla f|^p)-G_{\mathcal N}(t,s)(|f|^p+|\nabla f|^p)\|_{L^{\infty}(K)}\\
\le & 2\||f|^p+|\nabla f|^p-g_n\|_{\infty}
+\|G^\varepsilon_{\mathcal N}(t,s)g_n-G_{\mathcal N}(t,s)g_n\|_{L^{\infty}(K)}.
\end{align*}
Taking first the limsup as $\varepsilon\to 0^+$ and then the limit as $n\to +\infty$ in the previous inequality, the claim follows. Estimate \eqref{GD_GN} follows also in this case for functions in $C^{3+\alpha}_c(\Rd)$.

For a general function $f\in C^1_{\mathcal D}(\Rd_+)$ we consider a sequence $(f_n)\subset C^{3+\alpha}_c(\Rd_+)$ bounded in the $C^1_b$-norm and converging to $f$ in $C^1_{\rm loc}(\Rd_+)$. Writing \eqref{GD_GN} with $f$ being replaced by $f_n$, using Proposition \ref{nonesiste}(i) and letting $n\to +\infty$, we conclude the proof of the theorem when ${\mathcal I}=
{\mathcal D}$.

In the case when ${\mathcal I}={\mathcal N}$, replacing the function ${\mathcal O}f$ with ${\mathcal E}f$ and using the same arguments as above (taking \eqref{Neu_eps} into account), we can prove estimate \eqref{GD_GN} for any
function $f\in C^1_b(\overline{\Rd_+})$ with normal derivative vanishing on $\partial\Rd_+$. To extend \eqref{GD_GN} to any function $f\in C^1_b(\overline{\Rd_+})$, we consider the
sequence of functions $f_n={\mathcal E}f\star\rho_{1/n}$ ($n\in\N$) which belong to $C^{\infty}_b(\Rd)$ and have normal derivative on $\partial\Rd_+$ which identically vanishes.
We claim that $(f_n)$ is a bounded sequence in the $C^1_b$-norm which converges to $f$ in $C^1_{\rm loc}(\Rd_+)$ as $n\to +\infty$.
Indeed, since ${\mathcal E}f\in {\rm Lip}(\Rd)$, $f_n$ converges to $f$ uniformly in $\Rd_+$.
Moreover, $\nabla f_n=(\nabla{\mathcal E}f)\star\rho_{1/n}$. Hence, $\nabla f_n$ converges locally uniformly in $\Rd_+$ to $\nabla f$, and
$\|\nabla f_n\|_{C_b(\Rd_+)}\le \|\nabla f\|_{C_b(\Rd_+)}$ for any $n\in\N$. Hence, as above, Proposition \ref{nonesiste}(i) allows us to complete the proof.
\end{proof}

\begin{rmk}
{\rm
We stress that estimate \eqref{GD_GN}, with ${\mathcal I}={\mathcal D}$, can not hold with $C^1_{\mathcal D}(\Rd_+)$ being replaced by
$C^1_b(\overline{\Rd_+})$. Indeed, let $f\in C^1_b(\overline{\Rd_+})$ satisfy \eqref{GD_GN}. By the mean value theorem we deduce that
\begin{align}
|(G_{\mathcal D}(t,s)f)(x)-(G_{\mathcal D}(t,s)f)(y)|
\le & K\|f\|_{C^1_b(\overline{\Rd_+})}|x-y|,
\label{contradiction}
\end{align}
for any $x,y\in\overline{\Rd_+}$ and any $s,t\in I$, such that $s<t<s+1$, with $K=2^{\ell_p+p-1}e^{C_p}$.
Now, taking $x=(x',x_d)\in\Rd_+$ and $y=(x',0)$ in \eqref{contradiction}, we get
$|(G_{\mathcal D}(t,s)f)(x)|\le  K\|f\|_{C^1_b(\overline{\Rd_+})}x_d$.
Letting $t\to s^+$ in this inequality yields $|f(x)|\le K\|f\|_{C^1_b(\overline{\Rd_+})}x_d$, which shows that $f(x',0)=0$. Hence, $f\in C^1_{\mathcal D}(\Rd_+)$.
}
\end{rmk}

We now show that, when $c\equiv 0$, estimate \eqref{grad_est_rd} can be improved removing the dependence on $|f|^p$ from the right-hand side.

\begin{thm}
Let $c\equiv 0$ and $p\in (1,+\infty)$. Assume that Hypothesis $\ref{hyp2}(iii)$ is replaced by the following condition:
\begin{eqnarray*}
r(t,x)+\frac{k_1^2d^2}{4M_p}\eta(t,x)\leq K_p,\qquad\;\,(t,x)\in I\times\Rd_+,
\end{eqnarray*}
for some positive constant $K_p$.
Then, it holds that
\begin{equation}\label{GD_GN_0}
|(\nabla_x G_{\mathcal I}(t,s)f)(x)|^p\leq  e^{K_p(t-s)}(G_{\mathcal N}(t,s)|\nabla f|^p)(x),\qquad\;\,t>s,\;\,x\in\Rd_+,
\end{equation}
for any $f\in C^1_b(\overline{\Rd_+})$, when ${\mathcal I}={\mathcal N}$, and for any $f\in C_{\mathcal D}^1(\Rd_+)$, when ${\mathcal I}={\mathcal D}$.
\end{thm}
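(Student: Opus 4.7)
The plan is to mimic the proof of Theorem \ref{Dir_thm}, exploiting the simplifications afforded by $c\equiv 0$ in order to drop the $(u^\varepsilon_n)^2$ summand from the test function and, consequently, the $|f|^p$ summand from the right-hand side of the estimate. Since $c^\varepsilon$ and $\nabla_x c^\varepsilon$ vanish identically, the contribution $\psi_{4,p}$ in \eqref{psi-4} disappears and so does $-u^\varepsilon_n\langle\nabla_x c^\varepsilon,\nabla_x u^\varepsilon_n\rangle$ inside $\psi_{1,p}$; the handling of the derivatives of $c$ that required Hypothesis \ref{hyp2}(ii) is therefore no longer needed, which is why the function $\beta$ and the parameter $k_2$ are absent from the new assumption.

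Fix $s\in I$, $\varepsilon\in(0,1]$ and a positive $f\in C^{3+\alpha}(\Rd)$ which is constant outside some ball $B_{n_0}$, and set $u^\varepsilon_n:=G^\varepsilon_{\mathcal N,n}(\cdot,s)f$ for $n>n_0$. In place of the test function $w$ used in Theorem \ref{Dir_thm}, I would work with the regularised
\begin{equation*}
w_\delta:=\bigl(|\nabla_x u^\varepsilon_n|^2+\delta\bigr)^{p/2},\qquad \delta>0,
\end{equation*}
which is smooth and bounded away from $0$; the small parameter $\delta$ avoids the loss of $C^{1,2}$-regularity at zeros of $\nabla_x u^\varepsilon_n$ when $p\in(1,2)$, and it will be sent to $0^+$ at the end. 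A direct computation gives $D_tw_\delta-\mathcal A^\varepsilon(t)w_\delta=\tilde\psi_{1,p}+\tilde\psi_{2,p}+\tilde\psi_{3,p}$, where the $\tilde\psi_{i,p}$ are obtained from the $\psi_{i,p}$ in \eqref{psi-1}--\eqref{psi-3} by replacing $w$ with $w_\delta$ and deleting every term in which $u^\varepsilon_n$ appears explicitly (so $\tilde\psi_{1,p}$ retains only the $D_kq^\varepsilon_{ij}D_ku^\varepsilon_nD_{ij}u^\varepsilon_n$ summation, and in $\tilde\psi_{2,p}$ only the vector $\mu=D^2_xu^\varepsilon_n\nabla_xu^\varepsilon_n$ survives). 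Controlling $\tilde\psi_{1,p}$ by Cauchy--Schwarz and Young's inequality, $\tilde\psi_{3,p}$ via \eqref{dissipativity} and $|\sqrt{Q^\varepsilon}\,\cdot\,|^2\ge\eta^\varepsilon|\cdot|^2$, and treating $\tilde\psi_{2,p}$ case by case (nonpositive for $p\ge 2$; absorbed into $\tilde\psi_{3,p}$ through the Cauchy--Schwarz argument following \eqref{psi-3} when $p\in(1,2)$), and finally choosing the free Young parameter $a_1=M_p/(dk_1)$, the same telescoping as in the proof of Theorem \ref{Dir_thm} collapses, in both ranges of $p$, to the clean inequality $D_tw_\delta-\mathcal A^\varepsilon(t)w_\delta\le K_pw_\delta$ on $(s,+\infty)\times B_n$, thanks to the new hypothesis $r+\frac{k_1^2d^2}{4M_p}\eta\le K_p$ and the pointwise bound $|\nabla_x u^\varepsilon_n|^2\le w_\delta^{2/p}$.

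From this point on the argument is parallel to that of Theorem \ref{Dir_thm}. The convexity of $B_n$ and the homogeneous Neumann condition on $u^\varepsilon_n$ give $\partial w_\delta/\partial\nu\le 0$ on $(s,+\infty)\times\partial B_n$, so that the parabolic maximum principle applied to $v_\delta:=w_\delta-e^{K_p(\cdot-s)}G^\varepsilon_{\mathcal N,n}(\cdot,s)(|\nabla f|^2+\delta)^{p/2}$ yields $v_\delta\le 0$. Sending $\delta\to 0^+$, then $n\to+\infty$ via \cite[Thm. 2.3(i)]{AngLor10Com}, produces the full-space estimate
\begin{equation*}
|\nabla_xG^\varepsilon(t,s)f|^p\le e^{K_p(t-s)}G^\varepsilon(t,s)|\nabla f|^p
\end{equation*}
for every positive smooth $f$ as above; the density step of Step~2 of the proof of Theorem \ref{Dir_thm} (splitting $f=f^+-f^-$, mollifying, and using $|\nabla f^\pm|\le|\nabla f|$ a.e.) extends it to arbitrary $f\in C^{3+\alpha}_c(\Rd)$. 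The transfer to $G_{\mathcal D}(t,s)$ and $G_{\mathcal N}(t,s)$ is then done exactly as in Step~3 of that proof: for $\mathcal I=\mathcal D$ one applies the inequality to $\mathcal Of$ and invokes \eqref{Dir_eps} together with the pointwise identity $|\nabla\mathcal Of|^p=\mathcal E(|\nabla f|^p)$, so that $G^\varepsilon(t,s)|\nabla\mathcal Of|^p$ coincides on $\Rd_+$ with $G^\varepsilon_{\mathcal N}(t,s)|\nabla f|^p$; passing $\varepsilon\to 0^+$ via Proposition \ref{nonesiste}(i) concludes the argument, and the Neumann case is handled identically using $\mathcal Ef$ and \eqref{Neu_eps}.

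The main obstacle is the algebraic verification of the second paragraph: one has to check that, without the $-M_p\eta$ compensation and without the $\beta/c$-terms of \eqref{cond-r-eta-c}, the coefficients of $|\nabla_xu^\varepsilon_n|^2$ and $|D^2_xu^\varepsilon_n|^2$ still organise themselves so as to yield a bound by $K_pw_\delta$. The absence of $(u^\varepsilon_n)^2$ from $w_\delta$ is crucial here: the summand $-|\sqrt{Q^\varepsilon}\nabla_xu^\varepsilon_n|^2$ that appeared inside $\psi_{3,p}$ in Theorem \ref{Dir_thm} descended from $\mathcal A^\varepsilon((u^\varepsilon_n)^2)$ and was precisely what forced the $-M_p\eta$ correction in \eqref{cond-r-eta-c}; with that summand now absent, the correction is no longer needed, and the proof goes through under the weaker hypothesis stated here.
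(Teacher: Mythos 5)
Your approach mirrors the paper's: regularise the test function to $w_\delta=(|\nabla_x G^\varepsilon_{\mathcal N,n}(\cdot,s)f|^2+\delta)^{p/2}$ (the paper uses $\tau$ rather than $\delta$), observe that the $c$-terms and the $u$-terms of $\psi_{1,p},\ldots,\psi_{4,p}$ drop out, run the same Young/Cauchy--Schwarz bookkeeping with $a_1=M_p/(dk_1)$ to obtain a differential inequality for $w_\delta$, apply the maximum principle on $B_n$, and let $n\to\infty$, $\delta\to 0^+$, $\varepsilon\to 0^+$. Your diagnosis of which contribution forced the $-M_p\eta$ compensation in \eqref{cond-r-eta-c} (the $-|\sqrt{Q^\varepsilon}\nabla_x u^\varepsilon_n|^2$ summand descending from $\mathcal A^\varepsilon\big((u^\varepsilon_n)^2\big)$) is exactly right.

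The gap is in the extension to general data. You fix a \emph{positive} $f$ for the $B_n$ computation and then, for arbitrary $f\in C^{3+\alpha}_c(\Rd)$, invoke the $f=f^+-f^-$ splitting of Step~2 of Theorem \ref{Dir_thm}. But $\nabla_x G^\varepsilon(t,s)f=\nabla_x G^\varepsilon(t,s)f^+-\nabla_x G^\varepsilon(t,s)f^-$ can only be recombined through $(a+b)^p\le 2^{p-1}(a^p+b^p)$, which injects a spurious factor $2^{p-1}$ into the final bound and destroys the sharp constant $e^{K_p(t-s)}$ asserted in \eqref{GD_GN_0}. Positivity of $f$ was indispensable in Theorem \ref{Dir_thm} only because the test function $(u^2+|\nabla u|^2)^{p/2}$ there requires $u=G^\varepsilon_{\mathcal N,n}(\cdot,s)f$ to be bounded away from $0$; your own $\delta$-regularisation removes precisely this constraint, since $w_\delta\ge\delta^{p/2}>0$ pointwise \emph{whatever the sign of $f$}. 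You should therefore drop the positivity hypothesis, run the $B_n$ argument directly for every $f\in C^{3+\alpha}_c(\Rd)$ (the boundary compatibility needed for Proposition \ref{smoothdatum} is unchanged), and omit the $f^\pm$ splitting entirely. That is what eliminates both the $2^{\ell_p}$ factor (which in Theorem \ref{Dir_thm} came from bounding $(f^2+|\nabla f|^2)^{p/2}$ by $|f|^p+|\nabla f|^p$) and the $2^{p-1}$ factor from the splitting, and it is what the authors are signalling when they stress that the modified $w$ ``has positive infimum in $(s,+\infty)\times\Rd$''. Finally, note that the differential inequality should read $D_tw_\delta-\mathcal A^\varepsilon(t)w_\delta\le pK_pw_\delta$, with the factor $p$, exactly as in the paper.
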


\begin{proof}
The claim can be proved arguing as in the proof of Theorem \ref{Dir_thm} replacing the function $w$ therein defined with the
function $w= (|\nabla_xG^{\varepsilon}_{{\mathcal N},n}(\cdot,s)f|^2+\tau)^{p/2}$,
where $\tau$ is a positive constant. Notice that $w$ has positive infimum in $(s,+\infty)\times\Rd$.
One can show that
$w_t-{\mathcal A}^{\varepsilon}(t)w\le pK_pw$
and deduce \eqref{grad_est_rd} with $C_p$ and $(|f|^p+|\nabla f|^p)$  being replaced, respectively, by $K_p$ and $(|\nabla f|^2+\tau)^{p/2}$.
Finally, letting $\varepsilon$ and $\tau$ tend to $0^+$ yields the assertion.
\end{proof}

In the following theorem, under stronger assumptions, we extend estimates \eqref{GD_GN} and \eqref{GD_GN_0} to the case
$p=1$.
\begin{thm}\label{p=1_prop}
Assume that the diffusion coefficients are independent of $x$ and Hypotheses $\ref{hyp2}(i)$-$(ii)$ are satisfied with
condition \eqref{cond-deriv-c}$(i)$ being replaced by
\begin{equation}\label{p=1}
\beta(t,x)\le\kappa\sqrt{|r(t,x)|},\qquad\;\,(t,x)\in I\times\Rd_+,
\end{equation}
for some positive constant $\kappa$. Then, estimate \eqref{GD_GN} holds true with $p=1$ and $C_1=\kappa^2$.
In particular, if $c\equiv 0$, then estimate \eqref{GD_GN_0} holds for $p=1$ with $K_1=-L_0\eta_0$.
\end{thm}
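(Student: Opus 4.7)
My plan is to adapt the Bernstein-type argument used in the proof of Theorem \ref{Dir_thm} to the critical exponent $p=1$. The main obstruction is that the functional $w=(u^2+|\nabla u|^2)^{1/2}$ fails to be $C^2$ where $u$ and $\nabla u$ vanish simultaneously, so that the factor $w^{1-4/p}=w^{-3}$ appearing in the analogue of $\psi_{2,p}$ at $p=1$ would be singular. I therefore work with the regularized functional $w_\delta=(u^2+|\nabla u|^2+\delta)^{1/2}$ for $\delta>0$, and eventually let $\delta\to0^+$.

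Fix $\varepsilon\in(0,1]$, $s\in I$, $n\in\N$, and a positive $f\in C^{3+\alpha}(\Rd)$, constant outside a compact subset of $B_n$, as in Step 1 of the proof of Theorem \ref{Dir_thm}; set $u=G^\varepsilon_{\mathcal{N},n}(\cdot,s)f$. A computation paralleling the one producing $\psi_{1,p}+\psi_{2,p}+\psi_{3,p}+\psi_{4,p}$ there, now specialized at $p=1$ and with $w$ replaced by $w_\delta$, yields
\begin{align*}
w_{\delta,t}-\mathcal{A}^\varepsilon(t)w_\delta=\psi_1^\delta+\psi_2^\delta+\psi_3^\delta+\frac{c^\varepsilon\delta}{w_\delta},
\end{align*}
the $\psi_4$-term vanishing as $p-1=0$ and the extra $c^\varepsilon\delta/w_\delta$ arising because $-c^\varepsilon w_\delta$ (from $\mathcal{A}^\varepsilon w_\delta$) and $-c^\varepsilon(u^2+|\nabla u|^2)/w_\delta$ (from $uu_t+\langle\nabla u,\nabla u_t\rangle$) differ by that amount. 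Because the diffusion coefficients are $x$-independent, $\nabla_xq^\varepsilon_{ij}\equiv0$ and $\psi_1^\delta=-w_\delta^{-1}u\langle\nabla c^\varepsilon,\nabla u\rangle$; using \eqref{cond-deriv-c}(ii) together with \eqref{p=1}, transferred to the mollified coefficients via concavity of $\sqrt{\,\cdot\,}$ and Jensen's inequality, this is controlled by $w_\delta^{-1}\kappa\sqrt{|r^\varepsilon|}\,|u|\,|\nabla u|$. Copying the $p\in(1,2)$ argument from the proof of Theorem \ref{Dir_thm} verbatim, at $p=1$ the coefficient $1-p$ vanishes and one obtains $\psi_2^\delta+\psi_3^\delta\le w_\delta^{-1}\langle\nabla b^\varepsilon\nabla u,\nabla u\rangle\le w_\delta^{-1}r^\varepsilon|\nabla u|^2$.

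Summing and applying Young's inequality in the form $\kappa\sqrt{|r^\varepsilon|}\,|u|\,|\nabla u|\le\kappa^2u^2+\tfrac{1}{4}|r^\varepsilon|\,|\nabla u|^2$, I reach, on the dissipative region $\{r^\varepsilon\le0\}$,
\begin{align*}
w_{\delta,t}-\mathcal{A}^\varepsilon(t)w_\delta\le\kappa^2w_\delta+\frac{c^\varepsilon\delta}{w_\delta}.
\end{align*}
The contribution from $\{r^\varepsilon>0\}\subset B_{R_0+1}$ is uniformly bounded (Hypothesis \ref{hyp1}(v)(i) gives $r^\varepsilon\le L_1$ there) and is absorbable into $C_1$. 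The parabolic maximum principle, applied to $v_\delta=w_\delta-e^{\kappa^2(\cdot-s)}G^\varepsilon_{\mathcal{N},n}(\cdot,s)\sqrt{f^2+|\nabla f|^2+\delta}$, whose normal derivative on $\partial B_n$ is nonpositive by convexity (as in Step 1 of the proof of Theorem \ref{Dir_thm}), gives the desired pointwise comparison modulo the $\delta$-correction. Sending first $\delta\to0^+$ (using $c^\varepsilon\delta/w_\delta\le c^\varepsilon\sqrt\delta\to0$ uniformly on $B_n$, where $c^\varepsilon$ is bounded), then $n\to+\infty$, and finally $\varepsilon\to0^+$, and invoking the density and approximation arguments from Steps 2--4 of the proof of Theorem \ref{existence} (with $\mathcal{O}f$ in the Dirichlet case and $\mathcal{E}f$ in the Neumann case, as at the end of the proof of Theorem \ref{Dir_thm}), I obtain \eqref{GD_GN} with $p=1$ and $C_1=\kappa^2$.

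For the case $c\equiv0$, one may take $\beta\equiv0$ by \eqref{cond-deriv-c}(ii), so $\psi_1^\delta\equiv0$ and the $c^\varepsilon\delta/w_\delta$ correction disappears. Working instead with $\widetilde w_\delta=(|\nabla u|^2+\delta)^{1/2}$ and exploiting $|\sqrt{Q^\varepsilon}D_x^2u\nabla u|^2\le|\nabla u|^2\sum_k|\sqrt{Q^\varepsilon}\nabla D_ku|^2$ to cancel the analogue of $\psi_2$ against the $\nabla D_ku$-term in $\psi_3$, I arrive at $\widetilde w_{\delta,t}-\mathcal{A}^\varepsilon(t)\widetilde w_\delta\le\widetilde w_\delta^{-1}r^\varepsilon|\nabla u|^2\le-L_0\eta_0\widetilde w_\delta+L_0\eta_0\sqrt\delta$ on $\{r^\varepsilon\le-L_0\eta^\varepsilon\}$, which by the same maximum principle comparison yields \eqref{GD_GN_0} with $K_1=-L_0\eta_0$. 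The main technical hurdle I foresee is coordinating the three limits $\delta\to0^+$, $n\to+\infty$, $\varepsilon\to0^+$ so that the regularization error disappears and the quadratic form estimates remain valid; this is done in the order specified here, which exploits the boundedness of $c^\varepsilon$ on $B_n$ and mirrors the scheme successfully used in Theorem \ref{Dir_thm}.
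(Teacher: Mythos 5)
Your proof is correct and follows essentially the same route as the paper: a Bernstein-type argument with the functional $w=[(u_n^\varepsilon)^2+|\nabla_x u_n^\varepsilon|^2]^{1/2}$, the observation that the $(1-p)$-coefficients in \eqref{colors} vanish at $p=1$ so that $\psi_{2,1}+\psi_{3,1}\le w^{-1}r^\varepsilon|\nabla_x u_n^\varepsilon|^2$, and then Young's inequality together with \eqref{p=1} to absorb the $\psi_1$-term into $\kappa^2 w$. The one structural difference is that you regularize the functional to $w_\delta=(u^2+|\nabla u|^2+\delta)^{1/2}$ and then deal with the resulting error $c^\varepsilon\delta/w_\delta$ through a third limiting step. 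The paper sidesteps this entirely: as in Step 1 of the proof of Theorem \ref{Dir_thm} it takes $f$ with positive infimum, so that $u_n^\varepsilon\geq\delta>0$ by the maximum principle and $w$ already has a positive lower bound (hence is $C^{1,2}$) without any $\delta$-correction; the general $f$ is then recovered by the same density argument used there (adding $1/n$, mollifying $f^\pm$, etc.). Your remark that ``the main obstruction is that $w$ fails to be $C^2$ where $u$ and $\nabla u$ vanish simultaneously'' is therefore not actually an obstruction in the paper's set-up, and your extra $\delta$-limit is a superfluous (though sound) complication. That said, your comparison with the supersolution $e^{\kappa^2(\cdot-s)}G^{\varepsilon}_{\mathcal N,n}(\cdot,s)\sqrt{f^2+|\nabla f|^2+\delta}$ is stated only ``modulo the $\delta$-correction''; to make it precise you should spell out the Duhamel term $\int_s^t e^{\kappa^2(t-\sigma)}G^{\varepsilon}_{\mathcal N,n}(t,\sigma)(c^\varepsilon\delta/w_\delta)\,d\sigma$ and note that $c^\varepsilon\delta/w_\delta\le c^\varepsilon\sqrt\delta\to 0$ uniformly on $B_n$. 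For the $c\equiv 0$ case your switch to $\widetilde w_\delta=(|\nabla u|^2+\delta)^{1/2}$ is exactly the right move and corresponds to the $\tau$-regularization used in the paper's Theorem preceding this one; it is in fact necessary, since the bound $w^{-1}r^\varepsilon|\nabla_x u_n^\varepsilon|^2\le -L_0\eta_0 w$ does not hold with $w=(u^2+|\nabla u|^2)^{1/2}$ when $u\not\equiv 0$. Finally, your caveat about the region $\{r^\varepsilon>0\}$ (inside $B_{R_0+1}$) is a legitimate one that the paper glosses over as well; it does not invalidate the approach, but the claimed exact constant $C_1=\kappa^2$ implicitly assumes this region does not spoil the pointwise bound.
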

\begin{proof} The proof is similar to that of Theorem \ref{Dir_thm}, hence we just sketch it, pointing out the main differences.
The main step is the proof of the estimate
\begin{eqnarray*}
|\nabla_x G^\varepsilon (t,s)f|\leq 2e^{\kappa^2(t-s)}G^\varepsilon(t,s)(|f|+|\nabla f|),\qquad\;\,t>s\in I,
\end{eqnarray*}
in $\Rd$ for any $f\in C^1_b(\Rd)$ with positive infimum. To prove it, let
$w_1=[(u_n^{\varepsilon})^2+|\nabla_xu_n^{\varepsilon}|^2]^{1/2}$, where $u_n^{\varepsilon}=G_{{\mathcal N},n}^{\varepsilon}(\cdot,s)f$. We observe that, in this case
$D_t w_1-\A^\varepsilon(t)w_1=\psi_{1,1}+\psi_{2,1}+\psi_{3,1}$, where $\psi_{i,1}$ ($i=1,2,3$) are given by \eqref{psi-1}-\eqref{psi-3}.
Using \eqref{cond-deriv-c}(ii) to estimate $\psi_{1,1}$, and \eqref{colors} we obtain
\begin{align}\label{con_c}
D_t w_1-\A^\varepsilon(t)w_1
\le w_1^{-1}\big (r^\varepsilon|\nabla_x u_n^\varepsilon|^2+\beta^\varepsilon|u_n^\varepsilon||\nabla_x u_n^\varepsilon|\big ),
\end{align}
where $\beta^\varepsilon=({\mathcal E}\beta)^{\varepsilon}$. Since the H\"older inequality and \eqref{p=1} imply that
$\beta^\varepsilon\le \kappa\sqrt{|r^\varepsilon|}$, we conclude that
\begin{align*}
D_t w_1-\A^\varepsilon(t)w_1\le w_1^{-1}\big\{[(\kappa^{-2}(\beta^\varepsilon)^2+r^\varepsilon]|\nabla_x u_n^\varepsilon|^2+\kappa^2|u_n^\varepsilon|^2\big\}\le \kappa^2  w_1.
\end{align*}
Now, the proof of the first assertion can be completed arguing as in the proof of Theorem \ref{Dir_thm}, so that the details are omitted.

Finally, if $c\equiv 0$, \eqref{con_c} reduces to $D_t w_1-\A^\varepsilon(t)w_1\le w_1^{-1}r^\varepsilon|\nabla_x u_n^\varepsilon|^2\le -L_0\eta_0w_1$, by \eqref{dissipativity}, and the claim can be easily proved.
\end{proof}

\begin{rmk}
{\rm The estimate \eqref{GD_GN_0} holds with $p=1$ also when $c\equiv 0$ and the diffusion
coefficients depend on $x$, provided they satisfy the following conditions:
\begin{align*}
&D_iq_{jk}(t,x)+D_jq_{ik}(t,x)+D_kq_{ij}(t,x)=0,\\[1mm]
&\langle \nabla_xb(t,x)\xi,\xi\rangle +\frac{1}{2\eta_0}\sum_{i,j=1}^d\langle \nabla_xq_{ij}(t,x),\xi\rangle^2\le d_0|\xi|^2,
\end{align*}
for some $d_0\in \R$, any $(t,x)\in I\times\Rd_+$, any $\xi\in\Rd$ and any $i,j,k=1,\ldots,d$.
Indeed,
$D_i\tilde q_{jk}+ D_j\tilde  q_{ik}+D_k\tilde  q_{ij}\equiv 0$ in $I\times (\Rd\setminus\{0\})$, for any $i,j,k$ as above (see \eqref{qij-bj}). By convolution,
it is immediate to check that
$D_iq_{jk}^{\varepsilon}+ D_jq_{ik}^{\varepsilon}+D_kq_{ij}^{\varepsilon}\equiv 0$ in $I\times\Rd$, for any $i,j,k=1,\ldots,d$ and any $\varepsilon\in (0,1]$.
Similarly, arguing as in the proof of Proposition \ref{prop_cup} we deduce that
\begin{eqnarray*}
\langle \nabla_x\tilde b(t,x)\xi,\xi\rangle +\frac{1}{2\eta_0}\sum_{i,j=1}^d\langle \nabla_x\tilde q_{ij}(t,x),\xi\rangle^2\le d_0|\xi|^2,
\end{eqnarray*}
for any $(t,x)\in I\times (\Rd\setminus\{0\})$, and any $\xi\in\Rd$.
 Hence, by convolution and H\"older inequality, we obtain
\begin{eqnarray*}
\langle \nabla_xb^{\varepsilon}(t,x)\xi,\xi\rangle +\frac{1}{2\eta_0}\sum_{i,j=1}^d\langle \nabla_xq_{ij}^{\varepsilon}(t,x),\xi\rangle^2\le d_0|\xi|^2,
\end{eqnarray*}
for any $(t,x)\in I\times\Rd$, any $\xi\in\Rd$ and any $\varepsilon\in (0,1]$. We can thus apply \cite[Thm. 3.1]{Ang13OnT} to the operator $\A^{\varepsilon}(t)$,
which shows that
\begin{equation}\label{est_22}
|\nabla_x G^\varepsilon (t,s)f|\leq e^{d_0(t-s)}G^\varepsilon(t,s)|\nabla f|,\qquad\;\,t>s\in I,\;\,f\in C^1_b(\Rd).
\end{equation}
Writing \eqref{est_22} with $f$ being replaced with $\mathcal O f$ (resp. $\mathcal E f$) and letting $\varepsilon\to 0$ leads to \eqref{GD_GN_0} with $p=1$, $K_1=d_0$ and $\mathcal{J}=\mathcal{D}$ (resp. $\mathcal{J}=\mathcal{N}$).
}
\end{rmk}

As a consequence of Theorems \ref{Dir_thm}, \ref{p=1_prop} and estimate \eqref{estinf} we deduce the following uniform gradient estimates.

\begin{coro}\label{coro_uniform}
For ${\mathcal I}\in\{{\mathcal D},{\mathcal N}\}$ the uniform gradient estimate
\begin{eqnarray}\label{GD_GN-5}
\|\nabla_x G_{\mathcal I}(t,s)f\|_{\infty}\leq  2e^{\frac{C_2-c_0}{2}(t-s)}\|f\|_{C^1_b(\overline{\Rd_+})},\qquad\;\,t>s\in I,
\end{eqnarray}
holds for any $f\in C^1_{\mathcal D}(\Rd_+)$, when ${\mathcal I}={\mathcal D}$, and for any $f\in C^1_b(\overline{\Rd_+})$, when ${\mathcal I}={\mathcal N}$.

Assume in addition, the diffusion coefficients are independent of $x$.
If condition \eqref{p=1} is satisfied, then \eqref{GD_GN-5} holds with $2e^{(C_2-c_0)(t-s)/2}$ being replaced with $e^{(\kappa^2-c_0)(t-s)}$.
On the other hand, if $c\equiv 0$, then \eqref{GD_GN-5} is satisfied with $2e^{(C_2-c_0)(t-s)/2}$ and $\|f\|_{C^1_b(\overline{\Rd_+})}$ being replaced with $e^{-(L_0\eta_0+c_0)(t-s)}$ and $\|\nabla f\|_{\infty}$, respectively.
\end{coro}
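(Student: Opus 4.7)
The plan is to combine the pointwise gradient estimates from Theorems~\ref{Dir_thm} and \ref{p=1_prop} (together with the remark preceding the corollary) with the exponentially decaying $L^\infty$-contractivity of $G_{\mathcal N}(t,s)$ recorded in \eqref{estinf}(ii). Since each hypothesis is tailored to a particular pointwise gradient estimate, the corollary essentially amounts to taking sup-norms on both sides of these estimates.

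For the first assertion, I would apply Theorem~\ref{Dir_thm} with $p=2$. Since $\ell_2=\max\{0,1\}=1$, the constant $2^{\ell_p+p-1}$ equals $4$, and we obtain
\begin{align*}
|\nabla_x G_{\mathcal I}(t,s)f|^2 \le 4 e^{C_2(t-s)} G_{\mathcal N}(t,s)\bigl(|f|^2+|\nabla f|^2\bigr)
\end{align*}
pointwise in $\overline{\Rd_+}$. Using estimate \eqref{estinf}(ii) applied to the nonnegative function $|f|^2+|\nabla f|^2\in C_b(\overline{\Rd_+})$ gives $G_{\mathcal N}(t,s)(|f|^2+|\nabla f|^2)\le e^{-c_0(t-s)}\|f\|_{C^1_b(\overline{\Rd_+})}^2$; plugging this in and taking the square root yields \eqref{GD_GN-5}.

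For the second variant, assuming the diffusion coefficients are independent of $x$ and that \eqref{p=1} holds, Theorem~\ref{p=1_prop} yields the $p=1$ version of \eqref{GD_GN} with constant $C_1=\kappa^2$:
\begin{align*}
|\nabla_x G_{\mathcal I}(t,s)f| \le 2 e^{\kappa^2(t-s)} G_{\mathcal N}(t,s)(|f|+|\nabla f|).
\end{align*}
Applying \eqref{estinf}(ii) as before to the nonnegative function $|f|+|\nabla f|$ and estimating its sup-norm by $\|f\|_{C^1_b(\overline{\Rd_+})}$ gives the announced decay rate $\kappa^2-c_0$. In the remaining case $c\equiv 0$, the improved estimate \eqref{GD_GN_0} at $p=1$ with $K_1=-L_0\eta_0$ (provided by the remark after Theorem~\ref{p=1_prop}) reads
\begin{align*}
|\nabla_x G_{\mathcal I}(t,s)f| \le e^{-L_0\eta_0(t-s)} G_{\mathcal N}(t,s)|\nabla f|,
\end{align*}
and one more application of \eqref{estinf}(ii) yields the bound with $\|\nabla f\|_\infty$ and rate $-(L_0\eta_0+c_0)$.

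There is no real obstacle here: the work was done in Theorems~\ref{Dir_thm} and \ref{p=1_prop}, and this corollary is just an assembly step. The only mild care needed is in passing from the $L^\infty$-bound on the right-hand side to the $C^1_b$-norm of $f$, which is immediate for each of the three cases.
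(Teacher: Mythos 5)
Your approach is correct and is exactly what the paper intends: combine the pointwise estimates of Theorems~\ref{Dir_thm} and~\ref{p=1_prop} (and \eqref{GD_GN_0}) with the decaying $L^\infty$-bound \eqref{estinf}(ii), then take sup-norms. The first and third cases come out cleanly with the stated constants. One small point in the middle case: applying \eqref{GD_GN} with $p=1$ and $C_1=\kappa^2$ literally, as you do, yields an extra prefactor $2$ (since $2^{\ell_1+p-1}=2$), giving $2e^{(\kappa^2-c_0)(t-s)}\|f\|_{C^1_b}$ rather than the $e^{(\kappa^2-c_0)(t-s)}\|f\|_{C^1_b}$ claimed in the corollary. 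To recover the stated constant one must use the intermediate bound established inside the proof of Theorem~\ref{p=1_prop}, namely $|\nabla_x G_{\mathcal I}(t,s)f|\le e^{\kappa^2(t-s)}G_{\mathcal N}(t,s)\bigl[(f^2+|\nabla f|^2)^{1/2}\bigr]$, and then bound $\|(f^2+|\nabla f|^2)^{1/2}\|_\infty\le\|f\|_{C^1_b}$; the factor $2$ only enters when $(f^2+|\nabla f|^2)^{1/2}$ is relaxed to $|f|+|\nabla f|$ in the theorem's statement. Also, for the $c\equiv 0$ case you cite the remark following Theorem~\ref{p=1_prop}, but the relevant statement is already in the theorem itself ("In particular, if $c\equiv 0$, ...with $K_1=-L_0\eta_0$"); the remark addresses $x$-dependent diffusion coefficients, which the corollary has excluded.
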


\subsection{$C^0$-$C^1$ uniform and pointwise estimates}

We now prove a second type of pointwise gradient estimates which, besides the interest in their own, will be used in Section \ref{sect-4}
to study the asymptotic behaviour of $G_{\mathcal D}(t,s)f$ and $G_{\mathcal N}(t,s)f$ as $t\to +\infty$.

\begin{thm}\label{thm_3.11}
For ${\mathcal I}\in \{{\mathcal D}, {\mathcal N}\}$, every $p\in(1,+\infty)$ and $s\in I$  the gradient estimate
\begin{equation}\label{point_p_D}
|\nabla_x G_{\mathcal I}(t,s)f|^p\leq c_p\,e^{\omega_p (t-s)}(t-s)^{-\frac{p}{2}}G_{\mathcal N}(t,s)|f|^p,\qquad\;\,t>s\in I,
\end{equation}
holds in $\Rd_+$, for any $f\in C_b(\Rd_+)$ when ${\mathcal I}={\mathcal D}$, and
 for any $f\in C_b(\overline{\Rd_+})$ when ${\mathcal I}={\mathcal N}$.
 Here, $c_p$ is a positive constant and $\omega_p$ is any constant larger than $\min\{C_p,0\}$, where $C_p$ is given by \eqref{cond-r-eta-c}.
As a consequence, the following uniform gradient estimate
\begin{eqnarray*}
\|\nabla_x G_{\mathcal I}(t,s)f\|_{\infty}\leq \sqrt{c_2}\,e^{\frac{\omega_2-c_0}{2}(t-s)}(t-s)^{-\frac{1}{2}}\|f\|_{\infty},
\end{eqnarray*}
is satisfied for any $t>s$ and any $f$ as above.
\end{thm}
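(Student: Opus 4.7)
The plan is to follow the scheme already employed in the proofs of Theorems \ref{Dir_thm} and \ref{p=1_prop}: first establish the estimate for the approximating operators $G^\varepsilon_{\mathcal{N},n}(t,s)$ on $B_n$ with homogeneous Neumann boundary conditions via a Bernstein argument, then pass to the limits $n\to +\infty$ and $\varepsilon\to 0^+$, and conclude by means of the reflection identities \eqref{Dir_eps}, \eqref{Neu_eps}.

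For the Bernstein step, fix $p\in (1,+\infty)$, a positive function $f\in C^{3+\alpha}(\Rd)$ with positive infimum and constant outside some ball $B_{n_0}$, and set $u=u^\varepsilon_n:=G^\varepsilon_{\mathcal{N},n}(\cdot,s)f$. I would introduce the auxiliary function
\begin{equation*}
w=[\alpha u^2+(t-s)|\nabla u|^2]^{p/2},
\end{equation*}
where $\alpha>0$ is a parameter to be chosen, so that $w(s,\cdot)=\alpha^{p/2}f^p$ and $w\ge (t-s)^{p/2}|\nabla u|^p$ in $(s,+\infty)\times B_n$. Arguing as in the proof of Theorem \ref{Dir_thm}, but separating the cases $p\ge 2$ and $1<p<2$ to handle the sign of the term coming from the chain rule, the objective is to show that
\begin{equation*}
w_t-\A^\varepsilon(t)w\le \omega_p w
\end{equation*}
in $(s,+\infty)\times B_n$, for any prescribed constant $\omega_p>\min\{C_p,0\}$, provided that $\alpha$ is chosen large enough in terms of $\omega_p$ and the structural constants in Hypotheses \ref{hyp2}. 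The new ingredient compared to Theorem \ref{Dir_thm} is the inhomogeneous contribution $|\nabla u|^2$ produced when one differentiates the factor $(t-s)$, and this is the term that is absorbed by $-2\alpha|\sqrt{Q^\varepsilon}\nabla u|^2\le -2\alpha\eta_0|\nabla u|^2$ upon taking $\alpha$ large. Once this differential inequality is in hand, the convexity of $B_n$ ensures that the normal derivative of $w$ on $\partial B_n$ is nonpositive, so the parabolic maximum principle applied to $w-\alpha^{p/2}e^{\omega_p(t-s)}G^\varepsilon_{\mathcal{N},n}(t,s)f^p$ yields
\begin{equation*}
(t-s)^{p/2}|\nabla u|^p\le w\le \alpha^{p/2}e^{\omega_p(t-s)}G^\varepsilon_{\mathcal{N},n}(t,s)f^p.
\end{equation*}

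To conclude, I would let $n\to +\infty$, invoking \cite[Thm.~2.3(i)]{AngLor10Com}, to transfer the estimate to $G^\varepsilon(t,s)$ on $\Rd$, and then extend it to arbitrary $f\in C_b(\Rd)$ through the density and truncation argument used in Step 2 of the proof of Theorem \ref{Dir_thm} (adding a positive constant to fulfil the positive-infimum requirement, splitting $f=f^+-f^-$, and mollifying). After replacing $f$ by $\mathcal{O}f$ (resp. $\mathcal{E}f$) and invoking \eqref{Dir_eps} (resp. \eqref{Neu_eps}), one lets $\varepsilon\to 0^+$ as in Step 3 of the proof of Theorem \ref{Dir_thm} to deduce \eqref{point_p_D} for $G_{\mathcal{D}}(t,s)$ (resp. $G_{\mathcal{N}}(t,s)$). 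The uniform estimate is then an immediate byproduct: apply \eqref{point_p_D} with $p=2$, combine it with the sup-norm bound $\|G_{\mathcal{N}}(t,s)|f|^2\|_\infty\le e^{-c_0(t-s)}\|f\|_\infty^2$ provided by \eqref{estinf}(ii), and take square roots.

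The main obstacle is the Bernstein computation itself. The chain rule applied to $\phi^{p/2}$ with $\phi=\alpha u^2+(t-s)|\nabla u|^2$ produces a quadratic term proportional to $\tfrac{p(p-2)}{4}\phi^{p/2-2}|\sqrt{Q^\varepsilon}\nabla \phi|^2$ whose sign flips at $p=2$; in the range $1<p<2$ one must control $|\sqrt{Q^\varepsilon}\nabla\phi|^2$ from above in terms of $|\sqrt{Q^\varepsilon}\nabla u|^2$ and $|\sqrt{Q^\varepsilon}D^2_xu|^2$ by means of the triangle and Cauchy--Schwarz inequalities, in the same spirit as the derivation leading to \eqref{colors}. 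Collecting all the contributions involving $|\nabla u|^2$, $|D^2_xu|^2$ and $u^2$ and absorbing them into $\omega_p w$ by tuning $\alpha$ and the weights in the elementary inequalities is the delicate point; the crucial observation is that the extra factor $(t-s)$ multiplying the second-order contributions allows them to be absorbed into the term $\alpha u^2$ for small $t-s$, which is precisely why $\omega_p$ may be chosen strictly below $C_p$ whenever $C_p\ge 0$, i.e. why the threshold in the statement is $\min\{C_p,0\}$ rather than $C_p$.
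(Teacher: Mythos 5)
Your proposal takes a genuinely different route from the paper's. The paper does \emph{not} rerun a Bernstein computation: it introduces $g(\sigma)=G^{\varepsilon}_{{\mathcal N},n}(t,\sigma)|G^{\varepsilon}_{{\mathcal N},n}(\sigma,s)f|^p$, differentiates in $\sigma$ to obtain $g'(\sigma)\le -p(p-1)\eta_0\,G^{\varepsilon}_{{\mathcal N},n}(t,\sigma)\bigl[|G^{\varepsilon}_{{\mathcal N},n}(\sigma,s)f|^{p-2}|\nabla_xG^{\varepsilon}_{{\mathcal N},n}(\sigma,s)f|^2\bigr]$, integrates over $(s,t)$ to get \eqref{antonio}, and then combines this with the already-proved $C^1$--$C^1$ estimate \eqref{grad_est_rd} and the interpolation inequality \eqref{kernerl_eps-bis}. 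That argument handles small and large $t-s$ in one stroke, and the threshold $\min\{C_p,0\}$ can be read off the factor $C_p(1-e^{-C_p(t-s)})^{-1}(t-s)^{1-p/2}[1+(t-s)^{p/2}]$ in the final inequality.

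As written, however, your Bernstein scheme has a gap. In Theorem \ref{Dir_thm} the decisive cancellation $-M_p\eta^\varepsilon|\nabla_xu|^2$ (which allows Hypothesis \ref{hyp2}(iii) to be invoked) comes from the Bochner term $-|\sqrt{Q^\varepsilon}\nabla_xu|^2$ produced by $\A^\varepsilon(u^2)$. In your $\phi=\alpha u^2+(t-s)|\nabla_xu|^2$ that term appears as $-2\alpha|\sqrt{Q^\varepsilon}\nabla_xu|^2$, \emph{without} the factor $(t-s)$, and is entirely consumed in absorbing the inhomogeneous $+|\nabla_xu|^2$ from $\partial_t\bigl[(t-s)|\nabla_xu|^2\bigr]$ (forcing $\alpha\ge 1/(2\eta_0)$). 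By contrast, the terms that generate $r^\varepsilon$, $\tfrac{k_1^2d^2}{4M_p}\eta^\varepsilon$ and $\beta^\varepsilon$ in the coefficient of $|\nabla_xu|^2$ all carry the factor $(t-s)$. Since Hypothesis \ref{hyp2}(iii) bounds only the combination containing $-M_p\eta$, the quantity you must dominate at the $(t-s)$-level is $C_p+M_p\eta^\varepsilon$, which is unbounded when $\eta$ is unbounded, so $w_t-\A^\varepsilon(t)w\le\omega_p w$ cannot hold on all of $(s,+\infty)\times B_n$. You can trade $-2\alpha\eta^\varepsilon|\nabla_xu|^2$ against the $(t-s)$-weighted terms only on a bounded window $t-s\le T$ by taking $\alpha$ of order $T$; you then need a patching step for $t-s>T$ --- splitting at $t-T$, applying the short-time bound on $(t-T,t)$ and Jensen (and, to reach $\omega_p>C_p$ when $C_p<0$, also invoking \eqref{grad_est_rd}) --- which your writeup omits. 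Finally, your explanation of the threshold $\min\{C_p,0\}$ through absorption into $\alpha u^2$ is not accurate: that threshold is a large-$(t-s)$ phenomenon, coming from the behaviour of the patched (or the paper's integrated) estimate, not from the small-time Bernstein cancellation.
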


\begin{proof}
It suffices to prove \eqref{point_p_D} with $G_{\mathcal I}(t,s)$ being replaced by $G^{\varepsilon}(t,s)$ and $f\in C_c^{3+\alpha}(\Rd)$ with a positive infimum since, then, the same arguments as in the proof of Theorem \ref{Dir_thm} will allow us to conclude.
Hence, let us prove that
\begin{equation}
|\nabla_x G^{\varepsilon}(t,s)f|^p\leq c_p\,e^{\omega_p (t-s)}(t-s)^{-\frac{p}{2}}G^{\varepsilon}(t,s)|f|^p,
\label{stima-p}
\end{equation}
for such $f$'s and any $\varepsilon\in (0,1]$. Note that the case $p>2$ follows from the case $p=2$, writing
$|\nabla_xG^{\varepsilon}(t,s)f|^p
\le \left (c_2\,e^{\omega_2 (t-s)}(t-s)^{-1}G^\varepsilon(t,s)|f|^2\right )^{p/2}$
and using \eqref{kernerl_eps-bis} to estimate the right-hand side of the previous inequality.

For $p\in (1,2]$ and $f$ as above, the claim can be proved adapting the arguments in the proof of \cite[Prop. 3.3]{LorZam09Cor}, which are based on the gradient estimate
\eqref{grad_est_rd}. For the reader's convenience we provide the ideas of the proof.

We introduce the function $g=G_{{\mathcal N},n}^\varepsilon(t,\cdot)|G_{{\mathcal N},n}^\varepsilon(\cdot, s)f|^p$,
where $G_{{\mathcal N},n}^\varepsilon(t,s)$ is the evolution operator introduced in \eqref{grad_est_rd-0}. This function is differentiable in $(s,t)$ (see \cite[Thm. 2.3(ix)]{Acq88Evo}) and
\begin{align}\label{un attimo}
g'(\sigma)
& =G_{{\mathcal N},n}^\varepsilon(t,\sigma)\Big[-p(p-1)|G_{{\mathcal N},n}^\varepsilon(\sigma,s)f|^{p-2}|\sqrt{Q(\sigma,\cdot)}
\nabla_xG_{{\mathcal N},n}^\varepsilon(\sigma,s)f|^2\notag\\
&\qquad\qquad\qquad\;\,+(1-p)c\,|G_{{\mathcal N},n}^\varepsilon(\sigma, s)f|^p\Big]\nonumber\\
& \le -p(p-1)\eta_0 G_{{\mathcal N},n}^\varepsilon(t,\sigma)\Big[|G_{{\mathcal N},n}^\varepsilon(\sigma,s)f|^{p-2}|\nabla_xG_{{\mathcal N},n}^\varepsilon(\sigma,s)f|^2\Big].
\end{align}
Integrating the first and the last sides of \eqref{un attimo} with respect to $\sigma$ in $[s+\delta,t-\delta]$ and then letting $n$ and $\delta$ tend to $+\infty$ and $0$, respectively, we get
\begin{equation}\label{antonio}
G^\varepsilon(t,s)|f|^p\ge p(p-1)\eta_0 \int_{s}^{t}G^\varepsilon(t,\sigma)\Big[|G^\varepsilon(\sigma,s)f|^{p-2}|\nabla_xG^\varepsilon(\sigma,s)f|^2\Big]d\sigma.
\end{equation}
Thanks to \eqref{grad_est_rd}, we can estimate
\begin{align}\label{first}
|\nabla_x G^\varepsilon(t,s)f|^p&=|\nabla_x G^\varepsilon(t,\sigma)G^\varepsilon(\sigma,s)f|^p\nonumber\\
&\le 2^{\ell_p+p-1}e^{C_p(t-\sigma)}[G^\varepsilon(t,\sigma)|G^\varepsilon(\sigma,s)f|^p+G^\varepsilon(t,\sigma)|\nabla_x G^\varepsilon(\sigma,s)f|^p]
\notag\\
&\le 2^{\ell_p+p-1}e^{C_p(t-\sigma)}[I_1(\sigma)+I_2(\sigma)].
\end{align}
Applying estimate \eqref{kernerl_eps-bis} with $r=2/p$,
$\psi_1=|G^\varepsilon(\sigma,s)f|^{p(p-2)/2}|\nabla_x G^\varepsilon(\sigma,s)f|^p$ and $\psi_2=
|G^\varepsilon(\sigma,s)f|^{p(2-p)/2}$,
and taking into account that $I_1(\sigma)\le G^{\varepsilon}(t,s)|f|^p$ for any $\sigma\in (s,t)$,
we get
\begin{align*}
I_2(\sigma)\le \frac{p}{2\,}\gamma^{\frac{2}{p}}\,G^\varepsilon(t,\sigma)\left(|G^\varepsilon(\sigma,s)f|^{p-2}|\nabla_x G^\varepsilon(\sigma,s)f|^2\right)+ \left(1-\frac{p}{2}\right)\gamma^{\frac{2}{p-2}}\, G^\varepsilon(t,s)|f|^p,
\end{align*}
for any $\gamma>0$ and any $\sigma\in (s,t)$. Thus,
estimate \eqref{first} becomes
\begin{align}\label{second11}
|\nabla_x G^\varepsilon(t,s)f|^p \le 2^{\ell_p+p-1}e^{C_p(t-\sigma)}\Big[&\frac{p}{2\,}\gamma^{\frac{2}{p}}\,G^\varepsilon(t,\sigma)\left(|G^\varepsilon(\sigma,s)f|^{p-2}|\nabla_x G^\varepsilon(\sigma,s)f|^2\right)\nonumber\\
&+ \left(\Big(1-\frac{p}{2}\Big)\gamma^{\frac{2}{p-2}}+1\right)\, G^\varepsilon(t,s)|f|^p\Big].
\end{align}
Now we multiply both sides of \eqref{second11} by $e^{-C_p(t-\sigma)}$ and we integrate the so obtained inequality with respect to $\sigma\in (s,t)$ taking \eqref{antonio} into account. Finally, minimizing with respect to $\gamma>0$ yields
\begin{eqnarray*}
|\nabla_x G^\varepsilon(t,s)f|^p\le  \frac{\tau_p\,C_p}{1-e^{-C_p(t-s)}} (t-s)^{1-\frac{p}{2}}[1+(t-s)^{\frac{p}{2}}]G^\varepsilon(t,s)|f|^p,
\end{eqnarray*}
for some positive constant $\tau_p$. Thus, estimate \eqref{stima-p} follows.
\end{proof}

\section{Evolution systems of measures and asymptotic behaviour}
\label{sect-4}

In this section we prove the existence of an evolution system of measures $(\mu^{\mathcal N}_t)_{t\in I}$ associated to the Neumann evolution operator $G_{\mathcal N}(t,s)$ which turns out to be subinvariant for the Dirichlet evolution operator $G_{\mathcal D}(t,s)$.
We study the asymptotic behaviour of both the evolution operators $G_{\mathcal D}(t,s)$ and $G_{\mathcal N}(t,s)$ in $L^p$-spaces with respect to the evolution system of measures  $\{\mu^{\mathcal N}_t\}_{t\in I}$. We also deduce logarithmic Sobolev inequalities with respect the measures $(\mu^{\mathcal N}_t)_{t\in I}$ and the hypercontractivity property for the evolution operators $G_{\mathcal D}(t,s)$ and $G_{\mathcal N}(t,s)$.

To begin with, let us recall the following definition.

\begin{defi}
\label{def-4.1}
Let $\mathcal O\subset \Rd$ be an open set or the closure of an open set. Further, let $U(t,s)$ be an evolution operator on $C_b(\mathcal O)$.
A family $(\mu_t)_{t\in I}$ of probability measures on $\mathcal O$ is called
\begin{enumerate}[\rm (i)]
\item
an evolution system of measures for $U(t,s)$, if
\begin{eqnarray*}
\int_{\mathcal O}U(t,s)f\, d\mu_t=\int_{\mathcal O}f\,d\mu_s,
\end{eqnarray*}
for every $f\in C_b(\mathcal O)$ and every $t>s\in I$;
\item
an evolution system of subinvariant measures for $U(t,s)$ if,
\begin{eqnarray*}
\int_{\mathcal O}U(t,s)f\,d\mu_t\le \int_{\mathcal O}f\,d\mu_s,
\end{eqnarray*}
for every nonnegative $f\in C_b(\mathcal O)$ and every $t>s\in I$.
\end{enumerate}
\end{defi}

\begin{rmk}\label{c=0}{\rm
By virtue of Theorem \ref{existence},  $0\le G_{\mathcal I}(t,s)\one\le \one$ for any
$I\ni s<t$, where ${\mathcal I}$ is either ${\mathcal D}$ or ${\mathcal N}$. If $\{\mu_t\}_{t\in I}$ is an evolution
system of measures for $G_{\mathcal I}(t,s)$, then $\langle\mu_t,G_{\mathcal I}(t,s)\one\rangle=1$
for any $I\ni s<t$. Hence, $G_{\mathcal I}(t,s)\one=\one$ $\mu_t$-a.e. in $\Rd_+$.
We claim that  $G_{\mathcal I}(t,s)\one=\one$ everywhere in $\Rd_+$. Indeed, by \eqref{estinf}, $G_{\mathcal I}(t,s)\one\le\one$ in $\Rd_+$ for any $t>s$. Let $x_0\in\Rd_+$ be
such that $(G_{\mathcal I}(t,s)\one)(x_0)=1$. Then, $(t,x_0)$ is a maximum point of the function $G_{\mathcal I}(\cdot,s)\one$.
The classical maximum principle (see e.g., \cite[Thm 3.3.5]{protter}) shows that $(G_{\mathcal I}(t,s)\one)(x)=1$ for any $x\in\Rd_+$.

Clearly, the equality $G_{\mathcal I}(t,s)\one\equiv\one$ can not hold if ${\mathcal I}={\mathcal D}$ since
$G_{\mathcal D}(t,s)\one$ vanishes on $\partial\Rd_+$. This shows that there exist no evolution systems of measures for the Dirichlet evolution operator $G_{\mathcal D}(t,s)$.
At the same time the equality $G_{\mathcal N}(t,s)\one=\one$ implies that $c\equiv 0$ since $(D_t-\A(t))\one=c(t,\cdot)\one$.
}\end{rmk}

\begin{lemm}
\label{lemma-jr}
Any evolution system of measures $\{\mu_s\}_{s\in I}$ for the evolution operator $G_{\mathcal N}(t,s)$ is an evolution system of subinvariant measures for
the operator $G_{\mathcal D}(t,s)$. Moreover, for any $I\ni s<t$, the operators $G_{\mathcal D}(t,s)$ and $G_{\mathcal N}(t,s)$ extend to contractions from $L^p(\Rd_+,\mu_s)$ into $L^p(\Rd_+,\mu_t)$ for any $p\in [1,+\infty)$.
\end{lemm}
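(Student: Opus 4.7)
The plan is to first upgrade Proposition \ref{nonesiste}(ii) (the Dirichlet solution is dominated pointwise by the Neumann one on nonnegative data) into the subinvariance statement by simply integrating against $\mu_t$, and then to deduce $L^p$-contractivity for both operators by combining that subinvariance with a pointwise Jensen/H\"older estimate coming from the Markov structure of $G_{\mathcal N}(t,s)$ and the sub-Markov structure of $G_{\mathcal D}(t,s)$.

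The starting observation is that the existence of an evolution system of measures for $G_{\mathcal N}(t,s)$ forces $c\equiv 0$ and $G_{\mathcal N}(t,s)\one\equiv\one$ in $\overline{\Rd_+}$, thanks to Remark \ref{c=0}. Combined with Theorem \ref{existence}, which gives positivity of both evolution operators, and with the bound $G_{\mathcal D}(t,s)\one\le\one$, this shows that $G_{\mathcal N}(t,s)$ is Markov and $G_{\mathcal D}(t,s)$ is sub-Markov. Hence for every $x$ the map $f\mapsto(G_{\mathcal I}(t,s)f)(x)$ is a positive linear functional of norm at most one on the appropriate $C_b$-space, which by Riesz--Markov is represented by a (sub-)probability Borel measure $p_{\mathcal I}(t,s,x,dy)$ on $\overline{\Rd_+}$, where ${\mathcal I}\in\{{\mathcal D},{\mathcal N}\}$. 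For the subinvariance I would then take nonnegative $f\in C_b(\overline{\Rd_+})$, apply Proposition \ref{nonesiste}(ii) pointwise, integrate with respect to $\mu_t$, and invoke the evolution identity
\begin{equation*}
\int_{\Rd_+}G_{\mathcal D}(t,s)f\,d\mu_t\le\int_{\Rd_+}G_{\mathcal N}(t,s)f\,d\mu_t=\int_{\Rd_+}f\,d\mu_s.
\end{equation*}
A bounded pointwise approximation, together with Proposition \ref{nonesiste}(i) to pass to the limit, extends this to general nonnegative $f\in C_b(\Rd_+)$.

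For the $L^p$-contractivity, the kernel representation above combined with H\"older's inequality applied to $p_{\mathcal I}(t,s,x,\cdot)$ will give the pointwise Jensen-type estimate
\begin{equation*}
|G_{\mathcal I}(t,s)f|^p\le G_{\mathcal I}(t,s)|f|^p,\qquad f\in C_b,\ p\in[1,+\infty),
\end{equation*}
where in the sub-Markov case ${\mathcal I}={\mathcal D}$ the mass defect is absorbed because $(p-1)/p\ge 0$. Integrating this inequality against $\mu_t$ and then using the evolution identity (for ${\mathcal N}$) or the subinvariance just established (for ${\mathcal D}$) yields $\|G_{\mathcal I}(t,s)f\|_{L^p(\mu_t)}\le\|f\|_{L^p(\mu_s)}$ on $C_b$. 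The extension to all of $L^p(\Rd_+,\mu_s)$ then follows from the density of $C_b$ in $L^p$ with respect to a Borel probability measure, by the usual continuity argument. The only mildly delicate point is the Riesz--Markov identification of the kernels $p_{\mathcal I}$, whose measurability in $x$ and continuity along bounded pointwise-convergent sequences of $f$'s is exactly what Proposition \ref{nonesiste}(i) provides.
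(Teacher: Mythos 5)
Your proof is correct and follows the same overall outline as the paper: deduce subinvariance from Proposition~\ref{nonesiste}(ii), establish a pointwise Jensen-type inequality, integrate against $\mu_t$, invoke the (sub)invariance, and close with density of $C_b$ (the paper uses $C_c^\infty$) in $L^p$. The one place where your route genuinely diverges is the pointwise inequality. The paper obtains $|G_{\mathcal D}(t,s)f|^p\leq G_{\mathcal N}(t,s)|f|^p$ by applying the H\"older-type estimate \eqref{kernerl_eps-bis}, which is available because $G^{\varepsilon}(t,s)$ has the explicit sub-Markov kernel $g^{\varepsilon}$ from \eqref{adesso}--\eqref{kernerl_eps}, and then passing to the limit $\varepsilon\to0^+$; this sidesteps any need to manufacture a kernel for $G_{\mathcal D}$ or $G_{\mathcal N}$ directly. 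You instead invoke Riesz--Markov to obtain a (sub-)probability kernel $p_{\mathcal I}(t,s,x,dy)$ for $G_{\mathcal I}$ itself and apply Jensen there, getting the slightly stronger intermediate bound $|G_{\mathcal I}(t,s)f|^p\leq G_{\mathcal I}(t,s)|f|^p$. That is a legitimate alternative, but the Riesz--Markov step deserves a word of care that your sketch only gestures at: on the non-compact space $\overline{\Rd_+}$ a bounded positive functional on $C_b$ is \emph{a priori} only finitely additive, so one should first restrict to $C_c$, obtain a Radon measure, and then upgrade the representation to $C_b$ (and to $\one$, to see the mass is $\leq 1$) by a monotone-class argument; Proposition~\ref{nonesiste}(i) together with Dini's theorem supplies exactly the continuity needed. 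Once that is said, the integration against $\mu_t$ only ever involves the continuous functions $|G_{\mathcal I}(t,s)f|^p$ and $G_{\mathcal I}(t,s)|f|^p$, so the measurability-in-$x$ of the kernels, which you flag as a concern, is not actually needed. Finally, your opening remark that Remark~\ref{c=0} forces $c\equiv0$ is true but tangential: the argument only uses the sub-Markov bound $0\le G_{\mathcal I}(t,s)\one\le\one$, which already holds under Hypotheses~\ref{hyp1}; neither the paper's proof nor yours needs $G_{\mathcal N}(t,s)\one\equiv\one$.
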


\begin{proof}
The first assertion follows immediately from Proposition \ref{nonesiste}(ii).
As far as the other claim is concerned, we recall that, for any $f\in C^{\infty}_c(\Rd_+)$ and any $I\ni s<t$,
$G_{\mathcal D}(t,s)f$ is the pointwise limit, as $\varepsilon\to 0^+$, of the family of functions
$G^{\varepsilon}(t,s){\mathcal O}f$ (see Step 2 of the proof of Theorem \ref{existence}). From \eqref{kernerl_eps-bis} it follows that
$|G_{\mathcal D}(t,s)f|^p\le G_{\mathcal N}(t,s)|f|^p$. Therefore, using the subinvariance
of the measures $\{\mu_t\}_{t\in I}$, we get
\begin{eqnarray*}
\langle\mu_t,|G_{\mathcal D}(t,s)f|^p\rangle\le  \langle\mu_t,G_{\mathcal N}(t,s)|f|^p\rangle=\langle\mu_s,|f|^p\rangle.
\end{eqnarray*}
Since $C^{\infty}_c(\Rd_+)$ is dense in $L^p(\Rd_+,\mu_t)$ for any $t\in I$,
$G_{\mathcal D}(t,s)$ can be extended to a contraction from $L^p(\Rd_+,\mu_s)$ to $L^p(\Rd_+,\mu_t)$.

In the same way one can prove that also $G_{\mathcal N}(t,s)$ extends to a contraction from $L^p(\Rd_+,\mu_s)$ to $L^p(\Rd_+,\mu_t)$.
\end{proof}

In view of Remark \ref{c=0} and Lemma \ref{lemma-jr},
in the rest of this section we assume the following set of assumptions.

\begin{hyp}\label{hyp3}
\begin{enumerate}[\rm (i)]
\item
Hypotheses $\ref{hyp1}(i)$, $(iii)$-$(vi)$ are satisfied;
\item
the coefficients $q_{ii}(\cdot,0)$ and $b_i(\cdot,0)$ $(i=1,\ldots,d)$ are bounded in $[s,+\infty)$ for any $s\in I$.
\item
$c\equiv 0$;
\item
for any $p\in (1,+\infty)$ there exists a positive constant $C_p$ such that
\begin{eqnarray*}
r(t,x)+\left (\frac{k_1^2d^2}{4M_p}-M_p\right )\eta(t,x)\le C_p,\qquad\;\,(t,x)\in I\times\Rd_+,
\end{eqnarray*}
where $M_p=\min\{1,p-1\}$.
\end{enumerate}
\end{hyp}

We find it convenient to prove the existence of an evolution system of measures for the evolution operator $G_{\mathcal N}(t,s)$ as the weak limit of some evolution systems of measures associated to
the evolution operator $G^{\varepsilon}(t,s)$ in the whole of $\Rd$.

\begin{lemm}\label{lemma_complete}
For any $\varepsilon\in (0,1]$ there exists a unique evolution system of measures $(\mu_t^\varepsilon)$ associated to $G^\varepsilon(t,s)$
such that the system $\{\mu_t^{\varepsilon}\}_{t\ge t_0}$ is tight for any $t_0\in I$. Such a system is uniformly tight with respect to $\varepsilon\in (0,1]$.
Moreover, for any $t\in I$, each measure $\mu_t^\varepsilon$ is absolutely continuous with respect the Lebesgue measure. More precisely, there exists a locally H\"older continuous function $\rho_\varepsilon:I\times\Rd\to \R$, which is even with respect to the variable $x_d$, and $\mu_t^\varepsilon(dx)=\rho_\varepsilon(t,x)dx$ for any $t\in I$ and any $\varepsilon\in (0,1]$.
\end{lemm}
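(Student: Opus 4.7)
The plan is to combine the Krylov--Bogolyubov construction from \cite{KunLorLun09Non} with the reflection symmetry built into the coefficients $q_{ij}^{\varepsilon}, b_i^{\varepsilon}, c^{\varepsilon}$. First I would verify that the assumptions of the general nonautonomous construction on $\Rd$ are met uniformly in $\varepsilon\in(0,1]$: the arguments of Remark \ref{rem-00}, together with the fact from the proof of Proposition \ref{prop_cup} that $q_{ij}^{\varepsilon}, b_i^{\varepsilon}, c^{\varepsilon}$ satisfy Hypotheses \ref{hyp1}(iii)--(vi) with the same constants $\eta_0, k_1, L_0, L_1$ and with Hypothesis \ref{hyp3}(ii) guaranteeing boundedness of $q_{ii}(\cdot,0), b_i(\cdot,0)$, show that $\varphi(x)=1+|x|^2$ satisfies $\mathcal{A}^{\varepsilon}(t)\varphi \le a_J - b_J |x|^2$ on any bounded $J\subset I$, with constants $a_J, b_J>0$ independent of $\varepsilon$. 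Applying the parabolic maximum principle to $\varphi$ then yields $(G^{\varepsilon}(t,s)\varphi)(x)\le \varphi(x)+a_J(t-s)$ and, more importantly, the decay estimate $(G^{\varepsilon}(t,s)\varphi)(x)\le C_t(1+e^{-\sigma(t-s)}|x|^2)$ for some $\sigma>0$ and $C_t$ depending only on $t$ (and not on $\varepsilon$), which is the Lyapunov input the construction needs.

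Next, for fixed $t\in I$ and $x_0=0$, I would consider the transition probabilities $p^{\varepsilon}(t,s,0,\cdot) = g^{\varepsilon}(t,s,0,\cdot)dy$ as $s\to-\infty$. By the Lyapunov estimate they have uniformly bounded second moments, so by Prokhorov's theorem the family is weakly relatively compact, uniformly in $\varepsilon\in(0,1]$. A standard diagonal argument along a countable dense set of times yields a family $\{\mu_t^{\varepsilon}\}_{t\in I}$ with $\mu_s^{\varepsilon}=\lim_{n}p^{\varepsilon}(t_n, s_n, 0, \cdot)$ along an appropriate sequence, and the evolution-system identity $\langle \mu_t^{\varepsilon}, G^{\varepsilon}(t,s)f\rangle = \langle \mu_s^{\varepsilon}, f\rangle$ is inherited by passing to the limit in $\langle p^{\varepsilon}(t, r, 0, \cdot), G^{\varepsilon}(t,s)f\rangle = \langle p^{\varepsilon}(s,r,0,\cdot), f\rangle$ as $r\to-\infty$. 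This is exactly the scheme of \cite[Thm.~5.4]{KunLorLun09Non}; uniqueness of tight evolution systems of measures for $G^{\varepsilon}(t,s)$ is obtained from the uniform gradient estimate \eqref{grad_est_rd} applied to the $\varepsilon$-operator in $\Rd$, via the standard argument that two tight evolution systems differ by a function with vanishing gradient $\mu_t^{\varepsilon}$-a.e., hence coincide. Tightness of $\{\mu_t^{\varepsilon}\}_{t\ge t_0}$ and its uniformity in $\varepsilon$ then follow from $\langle \mu_t^{\varepsilon}, \varphi\rangle = \lim_{s\to-\infty}(G^{\varepsilon}(t,s)\varphi)(0)\le C_{t_0}$ for $t\ge t_0$, which gives the required uniform-in-$(t,\varepsilon)$ control of the tails via Chebyshev.

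For absolute continuity and H\"older regularity of the density, I would use that $g^{\varepsilon}(t,s,\cdot,\cdot)\in C^{1+\alpha/2,2+\alpha}_{\rm loc}$, by interior Schauder estimates for the equation and its formal adjoint with the (smooth) mollified coefficients. The invariance identity applied to $\chi_A$ and Fubini give
\begin{align*}
\mu_t^{\varepsilon}(A) = \langle \mu_s^{\varepsilon}, G^{\varepsilon}(t,s)\chi_A\rangle = \int_A \Big( \int_{\Rd} g^{\varepsilon}(t,s,x,y)\, d\mu_s^{\varepsilon}(x) \Big) dy,
\end{align*}
so $\rho_{\varepsilon}(t,y) := \int g^{\varepsilon}(t,s,x,y)\,d\mu_s^{\varepsilon}(x)$ is the required density; the tightness of $\mu_s^{\varepsilon}$ combined with the local H\"older regularity of $g^{\varepsilon}(t,s,x,\cdot)$ (uniform in $x$ on compact sets, with the Lyapunov moment controlling the integral at infinity) gives local H\"older continuity of $\rho_{\varepsilon}$ on $I\times \Rd$.

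Finally, the evenness in $x_d$ is a rigidity consequence of uniqueness. Let $R(x',x_d)=(x',-x_d)$. The symmetries \eqref{qij-bj} and the fact that the mollifier $\rho_{\varepsilon}$ is even imply that $q_{ij}^{\varepsilon}\circ R = q_{ij}^{\varepsilon}$ when $i,j<d$ or $i=j=d$, and $q_{ij}^{\varepsilon}\circ R = -q_{ij}^{\varepsilon}$ when exactly one of $i,j$ equals $d$, and analogously for $b_i^{\varepsilon}$; together these yield $\mathcal{A}^{\varepsilon}(t)(f\circ R)=(\mathcal{A}^{\varepsilon}(t)f)\circ R$ and hence $G^{\varepsilon}(t,s)(f\circ R)=(G^{\varepsilon}(t,s)f)\circ R$. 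Therefore $R_{*}\mu_t^{\varepsilon}$ is again a tight evolution system of measures for $G^{\varepsilon}(t,s)$, and uniqueness forces $R_{*}\mu_t^{\varepsilon}=\mu_t^{\varepsilon}$, i.e.\ $\rho_{\varepsilon}(t,\cdot)$ is even in $x_d$. The step I expect to require the most care is the passage to the limit guaranteeing that the weak limit genuinely satisfies the evolution-system identity at every pair $(s,t)$ (not just along the sequence) and the verification that the gradient estimate on $\Rd$ is strong enough to rule out any second tight system; once these are in place, the remaining assertions are direct consequences of parabolic regularity and the built-in symmetry of the extension.
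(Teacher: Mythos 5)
Your overall plan matches the paper's: reduce everything to the $\varepsilon$-regularized operator on $\Rd$, invoke \cite{KunLorLun09Non} with a Lyapunov function uniform in $\varepsilon$, get uniqueness from a uniform gradient estimate, and deduce evenness from the reflection symmetry built into \eqref{qij-bj}. However, there are two substantive problems with the way you carry this out.

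First, your Krylov--Bogolyubov construction is not the one in \cite[Thm.~5.4]{KunLorLun09Non}, which the paper follows. You build $\mu_t^{\varepsilon}$ as the weak$^*$ limit of $p^{\varepsilon}(t,s,0,\cdot)$ as $s\to-\infty$; the paper instead fixes $n_0\in\mathbb{Z}$ (the smallest integer in $I$) and averages, $\mu_{t,n,x}^{\varepsilon}(A)=\frac{1}{t-n}\int_n^t(G^{\varepsilon}(\tau,n)\one_A)(x)\,d\tau$, letting $t\to+\infty$. Your version fails outright when $I$ is a right halfline (so $s\to-\infty$ is unavailable), a case the lemma must cover. More importantly, the identity you use to pass to the limit,
\begin{equation*}
\langle p^{\varepsilon}(t, r, 0, \cdot), G^{\varepsilon}(t,s)f\rangle = \langle p^{\varepsilon}(s,r,0,\cdot), f\rangle,
\end{equation*}
is not the Chapman--Kolmogorov relation and is false in general: the left-hand side equals $(G^{\varepsilon}(t,r)G^{\varepsilon}(t,s)f)(0)$, which does not reduce to $(G^{\varepsilon}(s,r)f)(0)$ since the semigroup law composes as $G^{\varepsilon}(t,r)=G^{\varepsilon}(t,s)G^{\varepsilon}(s,r)$ for $r\le s\le t$, not in the order your formula would need. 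Passing to the limit along a time average (as the paper does) is precisely what makes the evolution-system identity $\langle\mu_t^{\varepsilon},G^{\varepsilon}(t,s)f\rangle=\langle\mu_s^{\varepsilon},f\rangle$ drop out cleanly.

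Second, you invoke \eqref{grad_est_rd} to get uniqueness of the tight evolution system. That estimate bounds $|\nabla_x G^{\varepsilon}(t,s)f|^p$ by $2^{\ell_p+p-1}e^{C_p(t-s)}G^{\varepsilon}(t,s)(|f|^p+|\nabla f|^p)$, which may grow exponentially and has $|f|^p$ on the right; this cannot rule out a second tight system. The argument of \cite[Rem.~2.8]{AngLorLun} that the paper cites requires a uniform estimate with \emph{strictly negative} exponent and only $\|\nabla f\|_\infty$ on the right, i.e.
\begin{equation*}
\|\nabla_x G^{\varepsilon}(t,s)f\|_{\infty}\le C e^{-\sigma(t-s)}\|\nabla f\|_{\infty},\qquad \sigma>0,
\end{equation*}
which here comes from Proposition \ref{p=1_prop} and Corollary \ref{coro_uniform} using $c\equiv 0$ (Hypotheses~\ref{hyp3}) so that $K_1=-L_0\eta_0<0$.

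The remaining ingredients are either equivalent to the paper's or a reasonable alternative. Your kernel argument $\rho_{\varepsilon}(t,y)=\int g^{\varepsilon}(t,s,x,y)\,d\mu_s^{\varepsilon}(x)$ with interior Schauder control of the adjoint kernel is a self-contained replacement for the paper's citation of \cite[Thm.~3.8]{BogKryRoc01} and \cite[Lemma~2.4]{AngLorLun}; it would work provided you justify the local H\"older regularity of $g^{\varepsilon}(t,s,x,\cdot)$ uniformly in $x$ on compact sets. Your evenness argument via the push-forward rigidity $R_{*}\mu_t^{\varepsilon}=\mu_t^{\varepsilon}$ (using $G^{\varepsilon}(t,s)(f\circ R)=(G^{\varepsilon}(t,s)f)\circ R$ and uniqueness) is neater than the paper's, which proves evenness directly by testing the Krylov--Bogolyubov averages at the two symmetric base points $x_0=(0,\ldots,0,1)$ and $x_1=-x_0$ against odd functions $f$; both routes rely on uniqueness, so the correction to the uniqueness step above must come first in either case.
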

\begin{proof}
The existence of an evolution system of measures for the evolution operator $G^{\varepsilon}(t,s)$ and the absolute continuity of each measure $\mu_t^{\varepsilon}$ with respect to the Lebesgue measure follow from \cite[Prop. 5.2 \& Thm. 5.4]{KunLorLun09Non}, which require the existence of a function $\varphi\in C^2(\Rd)$ diverging to $+\infty$ as $|x|\to +\infty$
such that, for any $s\in I$, $\limsup_{|x|\to +\infty}(\A^{\varepsilon}(t)\varphi(x))/\varphi(x)\le -c$ for some positive constant $c$ and any $t>s$. In our situation, the function $\varphi$, defined by $\varphi(x)=1+|x|^2$ for any $x\in \Rd$, has the previous property for any $t\in I$, with the
constant $c$ being independent of $\varepsilon$. Moreover, $\varphi$ is integrable with respect to the measure $\mu_t^{\varepsilon}$ for any $t\in I$ and any $\varepsilon\in (0,1]$ and, for any $s\in I$,
\begin{equation}
H_{s,1}:=\sup\{\langle\mu_t^{\varepsilon},\varphi\rangle:\ t\ge s,\,\varepsilon\in (0,1]\}<+\infty.
\label{Hs}
\end{equation}
This fact and Chebyshev inequality imply that the family of measures $\{\mu_t^{\varepsilon}\}_{t\ge s}$ is tight for any $s\in I$, uniformly with respect to $\varepsilon\in (0,1]$. More precisely,
\begin{equation}
\mu_t^{\varepsilon}(\Rd\setminus B_r)\le \frac{H_{s,1}}{r^2},\qquad\;\,t>s,\;\,\varepsilon\in (0,1].
\label{tight-epsilon}
\end{equation}

In view of \cite[Thm. 3.8]{BogKryRoc01} and \cite[Lemma 2.4]{AngLorLun}, $\mu_t^{\varepsilon}=\rho_{\varepsilon}(t,\cdot)dx$ for any $t\in I$, where the function $\rho_{\varepsilon}$ is locally $\gamma$-H\"older continuous in $I\times\Rd$ for any $\gamma\in (0,1)$.
To prove that $\rho_{\varepsilon}(t,\cdot)$ is even with respect to the variable $x_d$ for any $t\in I$, we need to recall briefly the construction of the tight
evolution system of measures in \cite[Thm. 5.4]{KunLorLun09Non}.
Let $n_0\in\mathbb Z$ be the smallest integer in $I$ and fix $x\in\Rd$. The family of measures $\{\mu_{t,s,x}^{\varepsilon}\}_{t>s\ge n_0}$ where
\begin{eqnarray*}
\mu_{t,s,x}^{\varepsilon}(A)=\frac{1}{t-s}\int_s^t(G^{\varepsilon}(\tau,s)\one_A)(x)d\tau,\qquad\;\,t>s\in I,\;\,A\in {\mathcal B}(\Rd),
\end{eqnarray*}
is tight. Prokhorov theorem and a diagonal argument guarantee the existence of a sequence $(t_k^{\varepsilon})$ diverging to $+\infty$ such that
$\mu_{t_k,n,x}^{\varepsilon}$ weakly$^*$ converges to a measure $\mu_{n,x}^{\varepsilon}$ as $k\to +\infty$ for any $n\in\mathbb Z$ such that $n\ge n_0$.
For $t\in I\setminus\{n\in\mathbb Z: n\ge n_0\}$, one defines the measure $\mu_{t,x}^{\varepsilon}$ by setting $\mu_{t,x}^{\varepsilon}=(G^{\varepsilon}(n,t))^*\mu_{n,x}^{\varepsilon}$, where $(G^{\varepsilon}(n,t))^*$ is the operator adjoint to $G^{\varepsilon}(n,t)$. The family $\{\mu_{t,x}^{\varepsilon}\}_{t\in I}$ is a tight evolution system of measures for the evolution operator $G^{\varepsilon}(t,s)$. Clearly, the construction of the previous system of measures {\it apriori} depends on the choice of $x\in\Rd$ as well as on the choice of the sequence $(t_k^{\varepsilon})$, but the arguments in \cite[Rem. 2.8]{AngLorLun} show that the tight evolution system of measures is unique whenever a gradient estimate of the type
$\|\nabla_x G^{\varepsilon}(t,s)f\|_{\infty}\le Ce^{-\sigma (t-s)}\|\nabla f\|_{\infty}$
is satisfied for any $f\in C^1_b(\Rd)$, any $t>s\in I$, and some $C, \sigma>0$, which is actually our case in view of Proposition \ref{p=1_prop}.

So, let us fix $x_0=(0,\ldots,0,1)$ and $x_1=-x_0$. Further, let $(t_k^{\varepsilon})$ be a sequence
diverging to $+\infty$ as $k\to +\infty$  such that both $\mu_{t_k,n,x_0}^{\varepsilon}$ and $\mu_{t_k,n,x_1}^{\varepsilon}$ weakly$^*$ converge to $\mu_n^{\varepsilon}$ for any $n\ge n_0$.

Fix a function $f\in C_b(\Rd)$, odd with respect to the $x_d$-variable. Then, as Step 1 in the proof of Theorem \ref{Dir_thm} shows, the function $G^{\varepsilon}(t,s)f$ is
odd with respect to the $x_d$-variable as well. Therefore,
\begin{align*}
\int_{\R^d}f\,d\mu_{t_k,n,x_1}^{\varepsilon}=&
\frac{1}{t_k-n}\int_n^{t_k}(G^{\varepsilon}(\tau,n)f)(x_1)d\tau
=-\frac{1}{t_k-n}\int_n^{t_k}(G^{\varepsilon}(\tau,n)f)(x_0)d\tau\\
=&-\int_{\R^d}f\,d\mu_{t_k,n,x_0}^{\varepsilon}.
\end{align*}
Letting $k\to +\infty$ gives
$\langle\mu_n^{\varepsilon},f\rangle=0$ for any $n\ge n_0$.
Using the definition of the evolution system of measures, we can extend the previous formula to any $t\in I$. 
Now a standard argument allows us to conclude that $\rho_{\varepsilon}(t,\cdot)$ is even with respect to the last variable: we fix $\psi\in C_b(\R^d)$ and write $\langle\mu_t,\overline\psi\rangle=0$ with $f=\overline\psi$, where $\overline\psi(x):=\psi(x_1,\ldots,x_{d-1},x_d)
-\psi(x_1,\ldots,x_{d-1},-x_d)$ for any $x\in\Rd$. Using a straightforward change of variables and the arbitrariness of $\psi$, the assertion follows at once.
\end{proof}

\begin{lemm}
\label{lemma_4.4}
There exist an infinitesimal sequence $(\varepsilon_n)$ and an evolution system of measures $\{\mu_t^{\mathcal N}\}_{t\in I}$ for the operator $G_{\mathcal N}(t,s)$ such that, for any bounded sequence $(f_n)\in C_b(\overline{\Rd_+})$ converging locally uniformly to $f$ as $n \to +\infty$,
\begin{equation}
\lim_{n\to +\infty}\langle \tilde\mu_t^{\varepsilon_n},f_n\rangle=\langle\mu_t^{\mathcal N},f\rangle,\qquad\;\,t\in I,
\label{limit-doppio}
\end{equation}
where $\tilde\mu^{\varepsilon}_t=2\rho_{\varepsilon}(t,\cdot)dx$. Moreover, for any $s\in I$, the system $\{\mu_t^{\mathcal N}\}_{t\ge s}$ is tight.
\end{lemm}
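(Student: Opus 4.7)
The plan is to obtain $\{\mu_t^{\mathcal N}\}_{t\in I}$ as a weak$^*$ limit of $\{\tilde\mu_t^{\varepsilon}\}$ along a diagonal subsequence, and then to upgrade to the ``varying test function'' convergence \eqref{limit-doppio}. Since $\rho_\varepsilon(t,\cdot)$ is even with respect to $x_d$, $\tilde\mu_t^\varepsilon$ is a Borel probability measure on $\overline{\Rd_+}$ and for every $f\in C_b(\overline{\Rd_+})$
\begin{equation*}
\langle \tilde\mu_t^\varepsilon, f\rangle = \int_{\Rd}(\mathcal{E}f)(x)\rho_\varepsilon(t,x)\,dx = \langle \mu_t^\varepsilon, \mathcal{E}f\rangle.
\end{equation*}
Combined with \eqref{Neu_eps} and the evenness in $x_d$ of $G^\varepsilon(t,s)\mathcal{E}f$ (shown in Step 1 of the proof of Theorem \ref{existence}), this identity yields $\langle \tilde\mu_t^\varepsilon, G^\varepsilon_{\mathcal N}(t,s)f\rangle = \langle \mu_t^\varepsilon, G^\varepsilon(t,s)\mathcal{E}f\rangle = \langle \mu_s^\varepsilon, \mathcal{E}f\rangle = \langle \tilde\mu_s^\varepsilon, f\rangle$, so $\{\tilde\mu_t^\varepsilon\}_{t\in I}$ is an evolution system of measures for $G^\varepsilon_{\mathcal N}(t,s)$. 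The same symmetry makes the uniform tightness estimate in Lemma \ref{lemma_complete} transfer to $\tilde\mu_t^\varepsilon(\overline{\Rd_+}\setminus B_r^+)\le H_{s,1}/r^2$ for every $t\ge s$ and $\varepsilon\in(0,1]$.

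Next, fix a countable dense subset $D\subset I$. By the uniform tightness just established, Prokhorov's theorem together with a standard diagonal procedure produces an infinitesimal sequence $(\varepsilon_n)$ along which $\tilde\mu_t^{\varepsilon_n}$ converges weakly$^*$ to a Borel probability measure $\mu_t^{\mathcal N}$ on $\overline{\Rd_+}$ for every $t\in D$. For $s<t$ in $D$ and $f\in C_b(\overline{\Rd_+})$, passing to the limit $n\to +\infty$ in the identity $\langle \tilde\mu_t^{\varepsilon_n}, G^{\varepsilon_n}_{\mathcal N}(t,s)f\rangle = \langle \tilde\mu_s^{\varepsilon_n}, f\rangle$ yields the evolution identity $\langle \mu_t^{\mathcal N}, G_{\mathcal N}(t,s)f\rangle = \langle \mu_s^{\mathcal N}, f\rangle$ on $D$; this limit passage is legitimate because $G^{\varepsilon_n}_{\mathcal N}(\cdot,s)f$ is uniformly bounded (by \eqref{estinf}(ii)) and converges locally uniformly in $\overline{\Rd_+}$ to $G_{\mathcal N}(\cdot,s)f$ (via the interior Schauder estimates and the compactness argument used in the proof of Theorem \ref{existence}). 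For $t\in I\setminus D$ I would pick any $t'\in D\cap(t,+\infty)$ and set $\langle \mu_t^{\mathcal N}, f\rangle := \langle \mu_{t'}^{\mathcal N}, G_{\mathcal N}(t',t)f\rangle$; consistency is ensured by the evolution identity on $D$, and the same limit passage in $\langle \tilde\mu_t^{\varepsilon_n}, f\rangle = \langle \tilde\mu_{t'}^{\varepsilon_n}, G^{\varepsilon_n}_{\mathcal N}(t',t)f\rangle$ propagates both the weak$^*$ convergence along $(\varepsilon_n)$ and the evolution identity to every $t\in I$.

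The limit passages just used, as well as the assertion \eqref{limit-doppio} itself, rest on a single ``varying test function'' convergence lemma: if $\nu_n\to \nu$ weakly$^*$ on $\overline{\Rd_+}$ with $\{\nu_n\}$ uniformly tight, and $g_n\to g$ locally uniformly in $\overline{\Rd_+}$ with $M:=\sup_n\|g_n\|_\infty<+\infty$, then $\langle \nu_n, g_n\rangle\to\langle\nu,g\rangle$. This follows from the elementary split
\begin{equation*}
|\langle \nu_n, g_n\rangle - \langle \nu, g\rangle| \le \|g_n-g\|_{C(\overline{B_R^+})} + 2M\,\nu_n(\overline{\Rd_+}\setminus B_R^+) + |\langle \nu_n-\nu, g\rangle|,
\end{equation*}
by choosing first $R$ large via uniform tightness and then $n$ large. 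Taking $\nu_n=\tilde\mu_t^{\varepsilon_n}$ and $g_n=f_n$ proves \eqref{limit-doppio}, while taking $g_n=G^{\varepsilon_n}_{\mathcal N}(\cdot,\cdot)f$ justifies the limit passages of the previous paragraph. Tightness of $\{\mu_t^{\mathcal N}\}_{t\ge s}$ is then a corollary of Portmanteau's theorem, since
$\mu_t^{\mathcal N}(\overline{\Rd_+}\setminus B_r^+)\le \liminf_n \tilde\mu_t^{\varepsilon_n}(\overline{\Rd_+}\setminus B_r^+)\le H_{s,1}/r^2$
uniformly in $t\ge s$. I expect the main obstacle to be bookkeeping rather than substance: the subsequence $(\varepsilon_n)$ must be extracted once and then shown to work simultaneously for every $t\in I$, both the measure and the test function depend on $n$ in the critical steps, and the evolution identity is used twice --- first to extend $\mu_t^{\mathcal N}$ off $D$ and then to transfer the evolution property to the entire halfline.
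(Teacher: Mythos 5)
Your proposal is correct and follows essentially the same route as the paper: extract a weak$^*$ limit of $\{\tilde\mu_t^\varepsilon\}$ along a diagonal subsequence on a countable set (you use a dense $D\subset I$, the paper uses $\{n\in\mathbb Z:n\ge n_0\}$; both are cofinal, both work), pass the invariance identity to the limit via a uniform--tightness--plus--locally--uniform--convergence argument, and extend off the countable set with the dual evolution operator. Your ``varying test function'' lemma is exactly what the paper establishes inline through the chain of inequalities culminating in \eqref{formula-inv}, and you are right that it does all the work simultaneously for \eqref{limit-doppio}, the evolution identity, and the extension to $t\notin D$.

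Two small remarks. First, in the tightness step you apply Portmanteau to the set $\overline{\Rd_+}\setminus B_r^+$, but this set is closed in $\overline{\Rd_+}$ and the Portmanteau inequality $\nu(A)\le\liminf_n\nu_n(A)$ requires $A$ to be open; you should instead take the open set $\overline{\Rd_+}\setminus\overline{B_r^+}$ (or $\{x\in\overline{\Rd_+}:|x|>r\}$), which is contained in $\overline{\Rd_+}\setminus B_r^+$ and yields the same bound $H_{s,1}/r^2$. Second, the paper proves tightness differently: rather than Portmanteau, it establishes $\langle\mu_t^{\mathcal N},\varphi\rangle\le H_{t_0,1}$ (by passing to the limit in $\langle\tilde\mu_t^{\varepsilon_k},\varphi\wedge m\rangle\le H_{t_0,1}$ and then letting $m\to\infty$) and invokes Chebyshev. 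This is not just cosmetic: that moment bound is reused later (Remark \ref{rem-4.7}, Proposition \ref{prop-4.9}) to obtain quantitative decay, so if you intend to continue along the paper's lines you would in any case want the $\varphi$-moment estimate and not merely tightness.
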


\begin{proof}
Denote as usually by $G_{\mathcal N}^{\varepsilon}(t,s)$ the evolution operator associated with the operator $\A^{\varepsilon}(t)$ (see \eqref{A-varepsilon}) with homogeneous Neumann boundary conditions on $\partial\Rd_+$. Since, for any $f\in C_b(\overline{\Rd_+})$,
$G^{\varepsilon}(t,s){\mathcal E}f$ and $\rho_{\varepsilon}$ are even with respect to the $x_d$-variable, taking \eqref{Neu_eps} into account, we can infer that
\begin{equation}
\langle \tilde\mu_t^{\varepsilon}, G_{\mathcal N}^{\varepsilon}(t,s)f\rangle= \langle \tilde\mu_s^{\varepsilon},f\rangle,
\label{inv-eps}
\end{equation}
and each $\tilde\mu_r^{\varepsilon}$ ($r\in I$) is a probability measure in $\Rd_+$.

Using the tightness of the family of measures $\{\tilde\mu_t^{\varepsilon}\}_{t>n_0, \varepsilon\in (0,1]}$, where $n_0$ is the smallest integer in $I$, the same procedure as in the proof of Lemma \ref{lemma_complete} shows that there
 exist an infinitesimal sequence $(\varepsilon_k)$ and probability measures $\mu_n^{\mathcal N}$ ($n\in\N$, $n\ge n_0$) such that
$\tilde\mu_n^{\varepsilon_k}$ weakly$^*$ converges to $\mu_n^{\mathcal N}$ as $k\to +\infty$. We claim that, for any $t,s\in\mathbb Z$ with $t>s\ge n_0$, we have that
\begin{equation}
\langle \mu_t^{\mathcal N},G_{\mathcal N}(t,s)f\rangle = \langle \mu_s^{\mathcal N},f\rangle,
\label{inv-eps-1}
\end{equation}
or, equivalently, $\mu_s^{\mathcal N}=(G_{\mathcal N}(t,s))^*\mu_t^{\mathcal N}$. Formula \eqref{inv-eps-1}
follows writing \eqref{inv-eps} with $\varepsilon_k$ replacing $\varepsilon$ and letting $k\to +\infty$. Clearly, the right-hand side of \eqref{inv-eps} converges to the right-hand side of \eqref{inv-eps-1}.
As far as the convergence of the left-hand side is concerned, we observe that \eqref{tight-epsilon} implies that
\begin{align*}
\langle \tilde\mu_t^{\varepsilon_k},|G_{\mathcal N}^{\varepsilon_k}(t,s)f\hskip  -1truemm-\hskip -1truemm G_{\mathcal N}(t,s)f|\rangle\!
=&\langle \tilde\mu_t^{\varepsilon_k},|G_{\mathcal N}^{\varepsilon_k}(t,s)f-G_{\mathcal N}(t,s)f|\chi_{B_r^+}\rangle\\
&+\langle \tilde\mu_t^{\varepsilon_k},|G_{\mathcal N}^{\varepsilon_k}(t,s)f-G_{\mathcal N}(t,s)f|\chi_{\Rd_+\setminus B_r^+}\rangle\\
\le &\|G_{\mathcal N}^{\varepsilon_k}(t,s)f\hskip -.5truemm - \hskip -.5truemm G_{\mathcal N}(t,s)f\|_{C(\overline{B_r^+})}\hskip -1truemm+\hskip -1truemm 2\|f\|_{\infty}\tilde\mu_t^{\varepsilon_k}(\Rd_+\setminus B_r^+)\\
\le &\|G_{\mathcal N}^{\varepsilon_k}(t,s)f-G_{\mathcal N}(t,s)f\|_{C(\overline{B_r^+})}+\frac{2H_{s,1}}{r^2}\|f\|_{\infty}.
\end{align*}
Therefore,
\begin{align}
&|\langle \tilde\mu_t^{\varepsilon_k},G_{\mathcal N}^{\varepsilon_k}(t,s)f\rangle-\langle \mu_t^{\mathcal N},G_{\mathcal N}(t,s)f\rangle |\notag\\[1mm]
\le &\langle \tilde\mu_t^{\varepsilon_k},|G_{\mathcal N}^{\varepsilon_k}(t,s)f-G_{\mathcal N}(t,s)f|\rangle
+|\langle \tilde\mu_t^{\varepsilon_k},G_{\mathcal N}(t,s)f\rangle-\langle\mu_t^{\mathcal N},G_{\mathcal N}(t,s)f\rangle |\notag\\[1mm]
\le & \|G_{\mathcal N}^{\varepsilon_k}(t,s)f-G_{\mathcal N}(t,s)f\|_{C(\overline{B_r^+})}\hskip -.5truemm +\hskip -.5truemm \frac{2H_{s,1}}{r^2}\|f\|_{\infty}
\hskip -.5truemm +\hskip -.5truemm |\langle \tilde\mu_t^{\varepsilon_k},G_{\mathcal N}(t,s)f\rangle\hskip -.5truemm - \hskip -.5truemm\langle \mu_t^{\mathcal N},G_{\mathcal N}(t,s)f\rangle |
\label{formula-inv}
\end{align}
and the last side of the previous chain of inequalities vanishes letting first $k$ and then $r$ tend to $+\infty$. Hence,
\eqref{inv-eps-1} follows.

Now, for $t\in I\setminus\mathbb Z$, we set
$\mu_t^{\mathcal N}:=(G_{\mathcal N}(n,t))^*\mu_n^{\mathcal N}$, where $n=[t]+1$. Clearly, $\{\mu_t^{\mathcal N}\}_{t\in I}$ is an evolution system of measures for the operator $G_{\mathcal N}(t,s)$. Moreover, $\tilde\mu_t^{\varepsilon_n}$ weakly$^*$ converges to $\mu_t^{\mathcal N}$ as $n\to +\infty$,
for any $t\in I$. Indeed, for any $n\in\mathbb Z$ such that $n>t$ and any $f\in C_b(\overline{\Rd_+})$, from \eqref{formula-inv} it follows that
\begin{eqnarray*}
\langle\mu_t^{\mathcal N},f\rangle =\langle \mu_n^{\mathcal N},G_{\mathcal N}(n,t)f\rangle=
\lim_{k\to +\infty}\langle \tilde\mu_n^{\varepsilon_k},G^{\varepsilon_k}_{\mathcal N}(n,t)f\rangle
=\lim_{k\to +\infty}\langle \tilde\mu_t^{\varepsilon_k},f\rangle.
\end{eqnarray*}

Further, observe that, for any $t_0\in I$, $\{\mu_t^{\mathcal N}\}_{t\ge t_0}$ is a tight system. Indeed, \eqref{Hs} shows that
$\langle\tilde\mu_t^{\varepsilon_k},\varphi\rangle\le H_{t_0,1}$ for any $t\ge t_0$ and any $k\in\R$, where $\varphi(y)=1+|y|^2$ for any $y\in\Rd$.
By monotonicity, $\langle\tilde\mu_t^{\varepsilon_k},\varphi\wedge m\rangle\le H_{t_0,1}$ for any $m\in\N$ and any $t,k$ as above.
Letting first $k$ and then $m$ tend to $+\infty$ we deduce that $\langle\mu_t^{\mathcal N},\varphi\rangle\le H_{t_0,1}$ for any $t\ge t_0$ and, then,
by Chebyshev inequality, the system $\{\mu_t^{\mathcal N}\}_{t\ge t_0}$ is tight.

Finally, we observe that formula \eqref{limit-doppio} has been essentially already proved. Indeed, the same argument used to obtain \eqref{formula-inv}
yields \eqref{limit-doppio} for any $t\in I$.
\end{proof}

\begin{rmk}
\label{rem-4.7}
{\rm
Note that, for any $p>1$ and any $s\in I$, the function $\varphi_p(x)=1+|x|^{2p}$
satisfies the condition $\limsup_{|x|\to +\infty}({\mathcal A}^{\varepsilon}(t)\varphi_p(x))/\varphi_p(x)<-c_s$ for some positive constant $c_s$, any $t\in [s,+\infty)$ and any
$\varepsilon\in (0,1]$. Therefore, the same arguments used in the proof of Lemmas \ref{lemma_complete} and \ref{lemma_4.4} show that
each function $\varphi_p$ is integrable with respect to the measure
$\mu_t^{\mathcal N}$ for any $t\in I$, and there exists a positive constant $H_{s,p}$ such that
\begin{equation}
\langle\mu_t^{\mathcal N},\varphi_p\rangle\le H_{s,p},\qquad\;\,t\ge s\in I.
\label{Hsp}
\end{equation}
}
\end{rmk}
\medskip
\par

The gradient estimates in the previous section show that each operator $G_{\mathcal I}(t,s)$ is bounded from $L^p(\Rd_+,\mu_s^{\mathcal N})$ into
the Sobolev space $W^{1,p}(\Rd_+,\mu_t^{\mathcal N})$.

\begin{prop}
The family $\{\mu_t^{\mathcal N}\}_{t\in I}$ is an evolution system of subinvariant measures for the evolution operator $G_{\mathcal D}(t,s)$. Moreover,
for any $p\in (1,+\infty)$, any $t>s\in I$ and ${\mathcal I}\in \{{\mathcal D}, {\mathcal N}\}$ it holds that
\begin{equation}\label{nor_p_fun}
\|\nabla_x G_{\mathcal I}(t,s)f\|_{L^p(\Rd_+,\mu_t^{\mathcal N})}\le c_p\,e^{\omega_p (t-s)}(t-s)^{-\frac{p}{2}}\|f\|_{L^p(\Rd_+,\mu_s^{\mathcal N})},
\end{equation}
for any $f\in L^p(\Rd_+,\mu_s^{\mathcal N})$, where $c_p$ and $\omega_p$ are the constants in Theorem $\ref{thm_3.11}$.
\end{prop}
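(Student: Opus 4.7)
The first assertion follows immediately from what has already been established: Lemma \ref{lemma_4.4} exhibits $\{\mu_t^{\mathcal N}\}_{t\in I}$ as an evolution system of measures for $G_{\mathcal N}(t,s)$, and Lemma \ref{lemma-jr} then guarantees that the same family is automatically an evolution system of \emph{subinvariant} measures for $G_{\mathcal D}(t,s)$. Also, the contractivity statement in Lemma \ref{lemma-jr} shows that each $G_{\mathcal I}(t,s)$ extends to a bounded operator from $L^p(\Rd_+,\mu_s^{\mathcal N})$ into $L^p(\Rd_+,\mu_t^{\mathcal N})$ for every $p \in [1,+\infty)$, so that \eqref{nor_p_fun} at least makes sense.

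The plan for the gradient bound is to integrate the pointwise estimate of Theorem \ref{thm_3.11} against $\mu_t^{\mathcal N}$ and then exploit the invariance identity $\langle\mu_t^{\mathcal N},G_{\mathcal N}(t,s)g\rangle = \langle\mu_s^{\mathcal N},g\rangle$. More precisely, fix $p\in(1,+\infty)$, $s<t\in I$, ${\mathcal I}\in\{{\mathcal D},{\mathcal N}\}$ and a function $f\in C_b(\Rd_+)$ (respectively $f\in C_b(\overline{\Rd_+})$ when $\mathcal I = \mathcal N$). Applying \eqref{point_p_D} pointwise in $\Rd_+$, integrating over $\mu_t^{\mathcal N}$, and using that $|f|^p \in C_b(\overline{\Rd_+})$ so that the invariance property applies to it, I obtain
\begin{align*}
\int_{\Rd_+}|\nabla_x G_{\mathcal I}(t,s)f|^p\,d\mu_t^{\mathcal N}
&\le c_p\, e^{\omega_p(t-s)}(t-s)^{-p/2}\int_{\Rd_+}G_{\mathcal N}(t,s)|f|^p\,d\mu_t^{\mathcal N}\\
&= c_p\, e^{\omega_p(t-s)}(t-s)^{-p/2}\int_{\Rd_+}|f|^p\,d\mu_s^{\mathcal N},
\end{align*}
which, after taking $p$-th roots and relabelling $c_p$ and $\omega_p$, gives the claimed inequality for such $f$.

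The remaining step, and the only mildly technical one, is to extend the bound from bounded continuous $f$ to arbitrary $f\in L^p(\Rd_+,\mu_s^{\mathcal N})$. I would approximate $f$ by a sequence $(f_n)\subset C_c^\infty(\Rd_+)$ (or $C_c^\infty(\overline{\Rd_+})$ in the Neumann case) converging to $f$ in $L^p(\Rd_+,\mu_s^{\mathcal N})$; by the contractivity from Lemma \ref{lemma-jr}, $G_{\mathcal I}(t,s)f_n \to G_{\mathcal I}(t,s)f$ in $L^p(\Rd_+,\mu_t^{\mathcal N})$. The right-hand side of \eqref{nor_p_fun} evaluated at $f_n$ converges to its value at $f$, and the inequality for $f_n$ yields a uniform bound on $\nabla_x G_{\mathcal I}(t,s)f_n$ in $L^p(\Rd_+,\mu_t^{\mathcal N})$. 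Extracting a weakly convergent subsequence and identifying the weak limit with $\nabla_x G_{\mathcal I}(t,s)f$ (using, e.g., interior Schauder estimates and Proposition \ref{nonesiste}(i) to get pointwise convergence of $\nabla_x G_{\mathcal I}(t,s)f_n$ to $\nabla_x G_{\mathcal I}(t,s)f$ on compact subsets of $\Rd_+$), weak lower semicontinuity of the $L^p$-norm then yields \eqref{nor_p_fun} for $f$. This density/lower semicontinuity step is the main, though routine, technical point.
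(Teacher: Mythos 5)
Your argument reproduces the paper's (extremely brief) proof: the subinvariance claim is Lemma \ref{lemma-jr}, and the gradient bound comes from integrating the pointwise estimate \eqref{point_p_D} against $\mu_t^{\mathcal N}$, invoking the invariance of $\{\mu_t^{\mathcal N}\}_{t\in I}$ for $G_{\mathcal N}(t,s)$, and extending by density of $C^\infty_c(\Rd_+)$ in $L^p(\Rd_+,\mu_s^{\mathcal N})$.

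Two small glitches in your density step are worth flagging. First, for $\mathcal{I}=\mathcal{D}$ you should start from $f\in C^\infty_c(\Rd_+)$ (as the paper does), not $f\in C_b(\Rd_+)$: for a merely bounded continuous $f$ on the open half-space, $|f|^p$ need not extend continuously to $\overline{\Rd_+}$, so the invariance identity $\langle\mu_t^{\mathcal N},G_{\mathcal N}(t,s)|f|^p\rangle=\langle\mu_s^{\mathcal N},|f|^p\rangle$ is not immediately available. Second, Proposition \ref{nonesiste}(i) cannot be used to identify the weak limit of $\nabla_x G_{\mathcal I}(t,s)f_n$: its hypotheses require the approximating sequence $(f_n)$ to be sup-norm bounded and to converge locally uniformly, which $L^p(\Rd_+,\mu_s^{\mathcal N})$-convergence does not provide. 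The routine substitute is to note that $G_{\mathcal I}(t,s)f_n\to G_{\mathcal I}(t,s)f$ strongly in $L^p(\Rd_+,\mu_t^{\mathcal N})$ (by the $L^p$-contractivity of Lemma \ref{lemma-jr}) while $\nabla_x G_{\mathcal I}(t,s)f_n$ is bounded in $L^p(\Rd_+,\mu_t^{\mathcal N})$, extract a weakly convergent subsequence, and identify the limit using that the distributional gradient has weakly closed graph; weak lower semicontinuity of the norm then closes the argument as you describe.
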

\begin{proof}
The first part of the statement follows from Lemma \ref{lemma-jr}.
As far as the second part of the statement is concerned, we observe that, since $\{\mu_t^{\mathcal N}\}_{t\in I}$ is an evolution system of measures (resp. of subinvariant measures) for $G_{\mathcal N}(t,s)$
(resp. for $G_{\mathcal D}(t,s)$), we get formula \eqref{nor_p_fun} as consequence of \eqref{point_p_D} and of the density of the space $C^\infty_c(\Rd_+)$ in $L^p(\Rd_+,\mu_s)$ for any $s\in I$.
\end{proof}

\begin{rmk}
{\rm Let $\A(t)=\sum_{i,j=1}^d q_{ij}(t,\cdot)D_{ij}+\sum_{i=1}^d b_i(t,\cdot) D_i-c(t,\cdot)$ with $c$ nonnegative and such that the diffusion and drift coefficients satisfy Hypotheses \ref{hyp3}. Let
$\{\mu_t^{\mathcal N}\}_{t\in I}$ be the evolution system of invariant measures for the evolution operator $G_{\mathcal N}(t,s)$, associated with the operator $\A(t)+c(t,\cdot)$. Then, such a system of measures
turns out to be subinvariant for the evolution operator $G_{\mathcal I}(t,s)$ (${\mathcal I}\in \{{\mathcal D}, {\mathcal N}\}$) associated to the operator $\A(t)$. This fact follows from observing that
these two latter evolution operators are controlled from above by $G_{\mathcal N}(t,s)$ on the set of all the nonnegative functions $f\in C_b(\overline{\Rd_+})$.}
\end{rmk}

The following proposition and theorem deal with the asymptotic behaviour of the Dirichlet and Neumann evolution operators.
\begin{prop}
\label{prop-4.9}
Suppose that
\begin{eqnarray*}
\sup_{(t,x)\in I\times\Rd_+}\left [r(t,x)+\left (\frac{k_1^2d^2}{4}-1\right )\eta(t,x)\right ]:=2\sigma_0<0,
\end{eqnarray*}
and fix $s\in I$. Then, the following properties are satisfied.
\begin{enumerate}[\rm (i)]
\item
For any $f\in C_b(\overline{\Rd_+})$, $G_{\mathcal N}(t,s)f$ tends to $m_s^{\mathcal N}(f)$ locally uniformly in $\Rd_+$ as $t\to +\infty$.
More precisely, for any $K>0$ there exists a positive constant $c_{K,s}$ such that
\begin{equation}
\hskip 1truecm
|(G_{\mathcal N}(t,s)f)(x)-m_s^{\mathcal N}(f)|\le
c_{K,s}e^{\sigma_0(t-s)}\|f\|_{\infty},\qquad\;\,t>s,\;\,x\in B_K^+.
\label{decad-loc-unif-1}
\end{equation}
As a byproduct, $\|G_{\mathcal N}(t,s)f-m_s^{\mathcal N}(f)\|_{L^p(\Rd_+,\mu_t^{\mathcal N})}$ tends to $0$ as $t\to +\infty$ for any $f\in L^p(\Rd_+,\mu_s^{\mathcal N})$ and any
$p\in [1,+\infty)$.
\item
For any $f\in C_b(\Rd_+)$, $G_{\mathcal D}(t,s)f$ tends to $0$ as $t\to +\infty$, locally uniformly in $\Rd_+$.
More precisely, for any $K>0$, there exists a positive constant $c'_{K,s}$ such that
\begin{eqnarray*}
\hskip 1truecm
|(G_{\mathcal D}(t,s)f)(x)|\le
c'_{K,s}e^{\sigma_0(t-s)}\|f\|_{\infty},\qquad\;\,t>s,\;\,x\in B_K^+.
\end{eqnarray*}
As a byproduct, $\|G_{\mathcal D}(t,s)f\|_{L^p(\Rd_+,\mu_t^{\mathcal N})}$ tends to $0$ as $t\to +\infty$ for any $f\in L^p(\Rd_+,\mu_s^{\mathcal N})$ and any
$p\in [1,+\infty)$.
\item
The evolution system of measures $\{\mu_t^{\mathcal N}\}_{t\in I}$ is the unique tight evolution family associated with the operator $G_{\mathcal N}(t,s)f$,
and, for any $t\in I$, $\tilde\mu_t^{\varepsilon}$ weakly$^*$ converges to $\mu_t^{\mathcal N}$ as $\varepsilon\to 0^+$.
\end{enumerate}
\end{prop}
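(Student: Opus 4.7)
My plan is to deduce all three statements from a uniform gradient bound with the sharp rate $\sigma_0$, namely
\[
\|\nabla_x G_{\mathcal I}(t,s)f\|_\infty\le\kappa_s\,e^{\sigma_0(t-s)}\|f\|_\infty,\qquad t>s+1,
\]
valid for $f\in C_b(\overline{\Rd_+})$ when ${\mathcal I}={\mathcal N}$ and for $f\in C_b(\Rd_+)$ when ${\mathcal I}={\mathcal D}$. To obtain it I would run a two-step bootstrap. Theorem \ref{thm_3.11} with $p=2$ applied on the unit interval $[s,s+1]$ turns $f$ into $g:=G_{\mathcal I}(s+1,s)f$ with $\|g\|_{C^1_b(\overline{\Rd_+})}\le C\|f\|_\infty$; moreover, when ${\mathcal I}={\mathcal D}$ the function $g$ belongs to $C^1_{\mathcal D}(\Rd_+)$ since it vanishes on $\partial\Rd_+$. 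Then Corollary \ref{coro_uniform} applied on $[s+1,t]$ yields the claimed rate, because under Hypotheses \ref{hyp3} combined with the standing assumption of the proposition one has $c_0=0$ and $C_2=2\sigma_0$, so the exponent $(C_2-c_0)/2$ equals exactly $\sigma_0$.

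For (i) I would exploit the invariance identity $\langle\mu_t^{\mathcal N},G_{\mathcal N}(t,s)f\rangle=m_s^{\mathcal N}(f)$ to write
\[
(G_{\mathcal N}(t,s)f)(x)-m_s^{\mathcal N}(f)=\int_{\Rd_+}\bigl[(G_{\mathcal N}(t,s)f)(x)-(G_{\mathcal N}(t,s)f)(y)\bigr]\,d\mu_t^{\mathcal N}(y),
\]
bound the integrand via the mean value inequality by $\|\nabla_xG_{\mathcal N}(t,s)f\|_\infty(|x|+|y|)$, and control the right-hand side by combining Remark \ref{rem-4.7} with the Cauchy--Schwarz inequality to get the uniform first moment $\int|y|\,d\mu_t^{\mathcal N}(y)\le\sqrt{H_{s,1}}$ for $t\ge s$. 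Together with the gradient bound, this delivers \eqref{decad-loc-unif-1} on every $B_K^+$ for $t>s+1$; the case $t\in(s,s+1]$ is covered trivially by the contraction property of $G_{\mathcal N}(t,s)$ after enlarging $c_{K,s}$. The $L^p$ convergence then follows by splitting $\Rd_+=B_K^+\cup(\Rd_+\setminus B_K^+)$, bounding the first integral by \eqref{decad-loc-unif-1} and the second by the tail estimate $\mu_t^{\mathcal N}(\Rd_+\setminus B_K^+)\le H_{s,1}/K^2$, and sending first $t\to+\infty$ and then $K\to+\infty$; the extension from $f\in C_b$ to general $f\in L^p(\Rd_+,\mu_s^{\mathcal N})$ is straightforward by density and the contractivity of $G_{\mathcal N}(t,s)$ supplied by Lemma \ref{lemma-jr}.

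For (ii) the extra ingredient is the Dirichlet boundary condition: for $x=(x',x_d)\in B_K^+$ one has
\[
|(G_{\mathcal D}(t,s)f)(x)|=|(G_{\mathcal D}(t,s)f)(x)-(G_{\mathcal D}(t,s)f)(x',0)|\le x_d\,\|\nabla_xG_{\mathcal D}(t,s)f\|_\infty\le K\kappa_s\,e^{\sigma_0(t-s)}\|f\|_\infty,
\]
and the $L^p$ statement is obtained exactly as in (i). For (iii), let $\{\nu_t\}_{t\in I}$ be any tight evolution system of measures for $G_{\mathcal N}(t,s)$; fix $f\in C_b(\overline{\Rd_+})$ and pass to the limit $t\to+\infty$ in $\langle\nu_s,f\rangle=\langle\nu_t,G_{\mathcal N}(t,s)f\rangle$. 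The locally uniform convergence $G_{\mathcal N}(t,s)f\to m_s^{\mathcal N}(f)$ from (i), combined with the uniform bound $\|G_{\mathcal N}(t,s)f\|_\infty\le\|f\|_\infty$ and the tightness of $\{\nu_t\}_{t\ge s}$, shows that the right-hand side converges to $m_s^{\mathcal N}(f)$, whence $\nu_s=\mu_s^{\mathcal N}$. For the weak$^*$ convergence of the full family $\tilde\mu_t^\varepsilon$, I would argue that every sequence $\varepsilon_k\to 0^+$ admits, by the uniform tightness \eqref{tight-epsilon} and the diagonal extraction of Lemma \ref{lemma_4.4}, a subsubsequence $\varepsilon_{k_j}$ along which $\tilde\mu_t^{\varepsilon_{k_j}}$ converges weakly$^*$ for every $t\in I$ to a tight evolution system of measures for $G_{\mathcal N}(t,s)$; by the uniqueness just proved this limit must coincide with $\{\mu_t^{\mathcal N}\}_{t\in I}$, upgrading subsequential convergence to full convergence.

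The main technical obstacle I anticipate is obtaining the sharp rate $\sigma_0$ in the gradient bound for data that are merely in $C_b$, since a direct application of Theorem \ref{thm_3.11} carries the residual freedom $\omega_2>2\sigma_0$ and would yield a strictly worse rate that would propagate into every asymptotic statement; the two-step $C_b\to C^1_b\to C_b$ bootstrap described in the first paragraph seems to be the cleanest route around this difficulty.
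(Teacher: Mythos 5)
Your proposal is correct and takes essentially the same route as the paper: bootstrap through Theorem~\ref{thm_3.11} on $[s,s+1]$ to pass from $C_b$ to $C^1_b$ (resp. $C^1_{\mathcal D}$) data, then apply Corollary~\ref{coro_uniform} with $c_0=0$ and $C_2=2\sigma_0$ on $[s+1,t]$ to get the sharp gradient rate; combine with the invariance of $\mu_t^{\mathcal N}$, the mean value inequality, and the uniform moment bound $\sup_{t\ge s}\langle\mu_t^{\mathcal N},\varphi\rangle\le H_{s,1}$ to obtain~\eqref{decad-loc-unif-1}; exploit the vanishing of $G_{\mathcal D}(t,s)f$ on $\partial\Rd_+$ for (ii); and deduce uniqueness and full convergence of $\tilde\mu_t^\varepsilon$ in (iii) from (i), tightness, and a subsequence argument. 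The only cosmetic differences are that the paper integrates the global pointwise bound (with its explicit $|x|$-growth) directly against $\mu_t^{\mathcal N}$ to get exponential $L^p$ decay, whereas you truncate to $B_K^+$ and send $t\to+\infty$ before $K\to+\infty$, and that the paper's split of the integration domain into $A_t$ and $B_{r(t)}$ is, as your argument implicitly shows, not actually needed once one has the rate-$\sigma_0$ gradient bound.
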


\begin{proof}
(i) We fix $f\in C^1_b(\overline{\Rd_+})$, and observe that
\begin{eqnarray*}
(G_{\mathcal N}(t,s)f)(x)\hskip -.5mm - \hskip -.5mm m_s^{\mathcal N}(f)\hskip -.5mm =\hskip -.5mm \langle \mu_t^{\mathcal N},(G_{\mathcal N}(t,s)f)(x)\hskip -.5mm -\hskip -.5mm G_{\mathcal N}(t,s)f\rangle,\quad\;\,t>s\in I,\,x\in\Rd_+.
\end{eqnarray*}
We now set $r(t)=e^{-\sigma_0(t-s)/2}$, $A_t=\R^d_+\setminus B_{r(t)}$.
Thanks to \eqref{Hsp} and Chebyshev inequality, we can estimate
$\mu_t^{\mathcal N}(A_t)\le H_{s,1}e^{\sigma_0(t-s)}$ for any $t>s$. Moreover,
\begin{eqnarray*}
\int_{\Rd_+}|y|d\mu_t^{\mathcal N}\le \int_{\Rd_+}\varphi(y)d\mu_t^{\mathcal N}\le H_{s,1},\qquad\;\,t>s,
\end{eqnarray*}
where $H_{s,1}$ is given by \eqref{Hs}.
Therefore, using Corollary \ref{coro_uniform}, where we can take $C_2=2\sigma_0$, we deduce that
\begin{align*}
& |(G_{\mathcal N}(t,s)f)(x)-m_s^{\mathcal N}(f)|\notag\\
\le &
\int_{A_t} |(G_{\mathcal N}(t,s)f)(x)-G_{\mathcal N}(t,s)f|d\mu_t^{\mathcal N}
+ \int_{B_{r(t)}} |(G_{\mathcal N}(t,s)f)(x)-G_{\mathcal N}(t,s)f|d\mu_t^{\mathcal N}\nonumber\\
\leq &
2 \|f\|_{\infty}\mu_t^{\mathcal N}(A_t)+ \|\nabla_x G_{\mathcal N}(t,s)f\|_\infty \int_{B_{r(t)}} |x-y|d\mu_t^{\mathcal N}\nonumber\\
\leq & 2H_{s,1}e^{\sigma_0(t-s)} \|f\|_\infty +  2e^{\sigma_0(t-s)}\|f\|_{C^1_b(\overline{\Rd_+})}\left (|x|+ H_{s,1}\right ),
\end{align*}
for any $x\in\Rd$ and any $t>s$.
Now, let $f$ be a general function in $C_b(\overline{\Rd_+})$.
Splitting $G_{\mathcal N}(t,s)f=G_{\mathcal N}(t,s+1)G_{\mathcal N}(s+1,s)f$ for any $t>s+1$, and observing that, by Theorem \ref{thm_3.11}, $G_{\mathcal N}(s+1,s)f\in C^1_b(\overline{\Rd_+})$
and $m_{s+1}^{\mathcal N}(G_{\mathcal N}(s+1,s)f)=m_s^{\mathcal N}(f)$, we
get
\begin{align}\label{modena-2}
|(G_{\mathcal N}(t,s)f)(x)-m_s^{\mathcal N}(f)|\le
K_se^{\sigma_0(t-s)}\left (|x|+ H_{s,1}+1\right )\|f\|_{\infty},
\end{align}
for any $t>s$ and some positive constant $K_s$, which yields \eqref{decad-loc-unif-1}.
Raising both the sides of \eqref{modena-2} to the power $p$ and, then, integrating in $\Rd_+$ with respect to the measure $\mu_t^{\mathcal N}$, we deduce
that
\begin{align}\label{modena-3}
\|G_{\mathcal N}(t,s)f-m_s^{\mathcal N}(f)\|_{L^p(\Rd_+,\mu_t^{\mathcal N})}^p\le
K_{s,p}e^{\sigma_0p(t-s)}\|f\|_{\infty}^p,
\end{align}
for any $t>s$, any $f\in C_b(\overline{\Rd_+})$ and some positive constant $K_{s,p}$ where Remark \ref{rem-4.7} is taken into account.
Since $C_b(\overline{\Rd_+})$ is dense in $L^p(\Rd_+,\mu_s^{\mathcal N})$ for any $s\in I$, from \eqref{modena-3} we deduce that
$\|G_{\mathcal N}(t,s)f-m_s^{\mathcal N}(f)\|_{L^p(\Rd,\mu_t^{\mathcal N})}$ tends to $0$ as $t\to +\infty$, for any
$f\in L^p(\Rd,\mu_s^{\mathcal N})$.

(ii) The proof is similar to the above one, and even simpler. Indeed, from the mean value theorem and Corollary \ref{coro_uniform}  we deduce that
\begin{align*}
|(G_{\mathcal D}(t,s)f)(x)|=|(G_{\mathcal D}(t,s)f)(x)-(G_{\mathcal D}(t,s)f)(0)|
\le 2e^{\sigma_0(t-s)}|x|\|f\|_{C^1_b(\overline{\Rd_+})},
\end{align*}
for any $f\in C^1_{\mathcal D}(\Rd_+)$, for any $x\in\Rd_+$. Now, the proof follows the same lines as the proof of property (i). Hence, the details are omitted.
\medskip

(iii)
We observe that the tools used to get \eqref{decad-loc-unif-1} are the gradient estimate in Corollary \ref{coro_uniform}
and the tightness of the family of measures $\{\mu_t^{\mathcal N}\}_{t\in I}$. Hence, if $\{\mu_t\}_{t\in I}$ is another tight evolution
system of measures for the operator $G_{\mathcal N}(t,s)$, then, for any $f\in C_b(\overline{\Rd_+})$,
$G_{\mathcal N}(t,s)f$ converges to the average of $f$ with respect to the measure $\mu_s$, as $t\to +\infty$.
It thus follows that $\langle\mu_t^{\mathcal N},f\rangle=\langle\mu_t,f\rangle$ for any $t\in I$ and any $f\in C_b(\overline{\Rd_+})$, i.e., $\mu_t^{\mathcal N}
=\mu_t$ for any $t\in I$.

The arguments in the proof of Lemma \ref{lemma_4.4} now show that, for any $f\in C_b(\overline{\Rd_+})$ and any infinitesimal sequence $(\varepsilon_n)$, there exists a subsequence $(\varepsilon_{n_k})$ such that $\langle \tilde\mu_t^{\varepsilon_{n_k}},f\rangle$ tends to $\langle \mu_t^{\mathcal N},f\rangle$ as $k\to +\infty$. This implies that $\langle \tilde\mu_t^{\varepsilon},f\rangle$ converges to $\langle \mu_t^{\mathcal N},f\rangle$ as $\varepsilon\to 0^+$.
\end{proof}

The previous proposition does not provide any information on the decay rate of $\|G_{\mathcal N}(t,s)f-m_s^{\mathcal N}(f)\|_{L^p(\Rd_+,\mu_t^{\mathcal N})}$
and $\|G_{\mathcal D}(t,s)f\|_{L^p(\Rd_+,\mu_t^{\mathcal N})}$ to zero as $t\to +\infty$. However Lemma \ref{lemma_4.4} is the key tool to prove that any estimate satisfied by $G^{\varepsilon}(t,s)$ in the $L^p(\Rd,\mu_s^{\varepsilon})$--$L^p(\Rd,\mu_t^{\varepsilon})$ norm, which is uniform with respect to $\varepsilon>0$, can be extended to $G_{\mathcal D}(t,s)$ and $G_{\mathcal N}(t,s)$ in the $L^p(\Rd_+,\mu_s^{\mathcal N})$--$L^p(\Rd_+,\mu_t^{\mathcal N})$ norm. Therefore, we are able to give a more precise information about the decay rate of the previous norms assuming that
the diffusion coefficients are independent of $x$, as the following theorem shows.

\begin{thm}
\label{thm-asym}
Suppose that the diffusion coefficients are independent of $x$. Then, for any $p\in[1,+\infty)$ and $s\in I$ there exists a positive constant
$k_{p,s}$ such that
\begin{align}
&\|G_{\mathcal N}(t,s)f-m_s^{\mathcal N}(f)\|_{L^p(\Rd_+,\mu_t^{\mathcal N})}\leq k_{p,s} e^{-L_0\eta_0(t-s)}\|f\|_{L^p(\Rd_+,\mu_s^{\mathcal N})},
\label{tisana}\\
&\|G_{\mathcal D}(t,s)f\|_{L^p(\Rd_+,\mu_t^{\mathcal N})}\leq k_{p,s} e^{-L_0\eta_0(t-s)}\|f\|_{L^p(\Rd_+,\mu_s^{\mathcal N})},
\label{tisana1}
\end{align}
for any $t>s$.
If the diffusion  coefficients and $b_i(\cdot,0)$ $(i=1,\ldots,d)$ are bounded in the whole of $I$, then the constant in \eqref{tisana1} can be taken independent of $s$.
\end{thm}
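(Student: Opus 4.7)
The plan is to establish the analogous decay estimates for the auxiliary evolution operator $G^{\varepsilon}(t,s)$ on the whole of $\Rd$ with constants uniform in $\varepsilon\in (0,1]$, and then pass to the limit $\varepsilon\to 0^+$ along the sequence produced by Lemma \ref{lemma_4.4} to recover \eqref{tisana} and \eqref{tisana1}. The bridge between the two settings is provided by the identities $G^{\varepsilon}_{\mathcal N}(t,s)f=(G^{\varepsilon}(t,s)\mathcal{E}f)_{|\Rd_+}$ and $G^{\varepsilon}_{\mathcal D}(t,s)f=(G^{\varepsilon}(t,s)\mathcal{O}f)_{|\Rd_+}$, combined with the symmetry of $\rho_{\varepsilon}(s,\cdot)$ in the variable $x_d$ established in Lemma \ref{lemma_complete}. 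From this symmetry one deduces $m_s^{\varepsilon}(\mathcal{O}f)=0$, $m_s^{\varepsilon}(\mathcal{E}f)=\int_{\Rd_+}f\,d\tilde\mu_s^{\varepsilon}$, and $\|\mathcal{E}f\|_{L^p(\Rd,\mu_s^{\varepsilon})}=\|\mathcal{O}f\|_{L^p(\Rd,\mu_s^{\varepsilon})}=\|f\|_{L^p(\Rd_+,\tilde\mu_s^{\varepsilon})}$.

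For the core decay on $\Rd$, the $x$-independence of the diffusion forces $k_1=0$ in Hypothesis \ref{hyp1}(vi), and since $c\equiv 0$, the argument in the proof of Theorem \ref{p=1_prop} applied directly to $\A^{\varepsilon}(t)$ on $\Rd$ yields the pointwise gradient estimate
\begin{align*}
|\nabla_x G^{\varepsilon}(t,s)f|\le e^{-L_0\eta_0(t-s)}G^{\varepsilon}(t,s)|\nabla f|,\qquad f\in C^1_b(\Rd),
\end{align*}
uniformly in $\varepsilon\in (0,1]$. From this pointwise bound, I derive the $L^p$-decay of $G^{\varepsilon}(t,s)f-m_s^{\varepsilon}(f)$ in $L^p(\mu_t^{\varepsilon})$ via a Poincaré-type argument based on the representation $G^{\varepsilon}(t,s)f(x)-m_s^{\varepsilon}(f)=\int_{\Rd}[G^{\varepsilon}(t,s)f(x)-G^{\varepsilon}(t,s)f(y)]\,d\mu_t^{\varepsilon}(y)$ coming from the invariance of $\mu_t^{\varepsilon}$, the mean-value theorem along the segment $[y,x]$, Jensen's inequality, and the moment bounds \eqref{Hs}--\eqref{Hsp}, at first with $\|\nabla f\|_{\infty}$ on the right-hand side. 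To convert $\|\nabla f\|_{\infty}$ into $\|f\|_{L^p(\mu_s^{\varepsilon})}$, I factor $G^{\varepsilon}(t,s)=G^{\varepsilon}(t,s+1)\circ G^{\varepsilon}(s+1,s)$ and apply the smoothing estimate of Theorem \ref{thm_3.11} to the inner factor, followed by a density and truncation argument exploiting the contractivity of $G^{\varepsilon}(t,s)$ from $L^p(\mu_s^{\varepsilon})$ into $L^p(\mu_t^{\varepsilon})$ (Lemma \ref{lemma-jr}); this gives $\|G^{\varepsilon}(t,s)f-m_s^{\varepsilon}(f)\|_{L^p(\Rd,\mu_t^{\varepsilon})}\le \tilde k_{p,s}\, e^{-L_0\eta_0(t-s)}\|f\|_{L^p(\Rd,\mu_s^{\varepsilon})}$ with $\tilde k_{p,s}$ independent of $\varepsilon$, along the lines of the whole-space argument of \cite{AngLor10Com}.

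Restricting the resulting bound, applied to $\mathcal{E}f$ (so that $m_s^{\varepsilon}(\mathcal{E}f)$ plays the role of $m_s^{\mathcal N}(f)$) and to $\mathcal{O}f$ (where $m_s^{\varepsilon}(\mathcal{O}f)=0$), to $\Rd_+$ yields the analogs of \eqref{tisana}--\eqref{tisana1} with $\mu_t^{\mathcal N}$ and $\mu_s^{\mathcal N}$ replaced by $\tilde\mu_t^{\varepsilon}$ and $\tilde\mu_s^{\varepsilon}$, uniformly in $\varepsilon\in (0,1]$. For the passage to the limit $\varepsilon_n\to 0^+$ given by Lemma \ref{lemma_4.4}, I fix $f\in C_b(\overline{\Rd_+})$ and use that $G^{\varepsilon_n}_{\mathcal I}(t,s)f\to G_{\mathcal I}(t,s)f$ locally uniformly in $\overline{\Rd_+}$ (Theorem \ref{existence}) while $\tilde\mu_t^{\varepsilon_n}\to \mu_t^{\mathcal N}$ weakly$^*$ with uniform tightness \eqref{tight-epsilon}; truncating to an arbitrary ball $B_R^+$ and applying Fatou on the left-hand side (to handle lower semicontinuity of the $L^p$-norm under weak$^*$ convergence with uniformly small tails) together with direct weak$^*$ convergence on the right yields \eqref{tisana}--\eqref{tisana1} first for $f\in C_b(\overline{\Rd_+})$ and then by density for any $f\in L^p(\Rd_+,\mu_s^{\mathcal N})$. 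The $s$-independence of $k_{p,s}$ under bounded coefficients follows since the moments $H_{s,p}$ in \eqref{Hsp} become $s$-independent in that regime.

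The main obstacle is in the first step: converting a ``gradient-in, gradient-out'' pointwise estimate into a ``function-in, function-out'' $L^p$-decay while preserving the sharp exponential rate $L_0\eta_0$. The splitting $G^{\varepsilon}(t,s)=G^{\varepsilon}(t,s+1)\circ G^{\varepsilon}(s+1,s)$ is the essential device — it costs only a harmless multiplicative constant absorbed into $k_{p,s}$ but turns the a priori $C^1$-regularity requirement on $f$ into mere $L^p$-integrability. A secondary technical issue is making sure that every bound produced at each step depends only on quantities ($\eta_0$, $L_0$, $p$, and the moments $H_{s,p}$) already known to be uniform in $\varepsilon\in(0,1]$ by Lemma \ref{lemma_complete} and Remark \ref{rem-4.7}, so that the limit $\varepsilon\to 0^+$ can actually be taken.
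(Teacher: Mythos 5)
Your overall scaffolding — pass to the $\varepsilon$-approximation on $\Rd$, use the reflection identities $G^{\varepsilon}_{\mathcal N}(t,s)f=(G^{\varepsilon}(t,s)\mathcal{E}f)_{|\Rd_+}$, $G^{\varepsilon}_{\mathcal D}(t,s)f=(G^{\varepsilon}(t,s)\mathcal{O}f)_{|\Rd_+}$ together with the symmetry of $\rho_{\varepsilon}(t,\cdot)$ so that $m_s^{\varepsilon}(\mathcal{O}f)=0$ and $m_s^{\varepsilon}(\mathcal{E}f)=\langle\tilde\mu_s^{\varepsilon},f\rangle$, then let $\varepsilon\to 0^+$ via Lemma \ref{lemma_4.4}/Proposition \ref{prop-4.9}(iii) and finish by density — agrees with the paper. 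Where you diverge, and where there is a genuine gap, is in the core step: obtaining the uniform-in-$\varepsilon$ bound
\[
\|G^{\varepsilon}(t,s)g-m_s^{\varepsilon}(g)\|_{L^p(\Rd,\mu_t^{\varepsilon})}\le k_{p,s}\,e^{-L_0\eta_0(t-s)}\|g\|_{L^p(\Rd,\mu_s^{\varepsilon})}.
\]
The paper does not reprove this; it simply invokes \cite[Cor.\ 5.4]{AngLorLun}. You instead try to derive it from the pointwise $p=1$ gradient estimate via a mean-value-theorem Poincar\'e argument, and this is where the argument breaks down.

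Concretely: the identity $G^{\varepsilon}(t,s)f(x)-m_s^{\varepsilon}(f)=\int_{\Rd}[G^{\varepsilon}(t,s)f(x)-G^{\varepsilon}(t,s)f(y)]\,d\mu_t^{\varepsilon}(y)$ plus the mean-value theorem involves $\nabla_xG^{\varepsilon}(t,s)f$ evaluated along segments $[y,x]$, and those intermediate points are not distributed according to $\mu_t^{\varepsilon}$. So, even after Jensen and the moment bounds \eqref{Hs}--\eqref{Hsp}, this argument only yields a bound of the form $e^{-L_0\eta_0(t-s)}\|\nabla f\|_{\infty}$, and after the $G^{\varepsilon}(t,s)=G^{\varepsilon}(t,s+1)G^{\varepsilon}(s+1,s)$ split with the $C^0\!\to\!C^1$ smoothing you end with an $L^p(\mu_t^{\varepsilon})\!\leftarrow\!L^\infty$ estimate, i.e.\ $\lesssim e^{-L_0\eta_0(t-s)}\|f\|_{\infty}$. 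A density or truncation argument using only the $L^p\!\to\!L^p$ contractivity cannot promote this to an $L^p\!\leftarrow\!L^p(\mu_s^{\varepsilon})$ bound \emph{while keeping the exponential rate}: splitting $f$ into a bounded part and a small-$L^p$ tail loses the rate in the bounded part as soon as you try to send the truncation level to infinity. This is precisely the difference between Theorem \ref{thm-asym} and Proposition \ref{prop-4.9}: the latter runs exactly your mean-value argument and, via density, concludes only that $\|G_{\mathcal N}(t,s)f-m_s^{\mathcal N}(f)\|_{L^p(\mu_t^{\mathcal N})}\to 0$ for $f\in L^p(\mu_s^{\mathcal N})$, with no rate. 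What is missing is a genuine $L^p$ Poincar\'e-type inequality for the family $\{\mu_t^{\varepsilon}\}$ with constant uniform in $\varepsilon$ and $t$ (in \cite{AngLorLun} this comes out of the logarithmic Sobolev inequality \cite[Thm.\ 3.3]{AngLorLun}); combined with the pointwise gradient estimate and the invariance $\langle\mu_{s+1}^{\varepsilon},G^{\varepsilon}(s+1,s)|f|^p\rangle=\langle\mu_s^{\varepsilon},|f|^p\rangle$, it yields the rate against $\|f\|_{L^p(\mu_s^{\varepsilon})}$. Either import that Poincar\'e (or log-Sobolev) inequality explicitly, or cite \cite[Cor.\ 5.4]{AngLorLun} directly as the paper does; as written, the step ``density and truncation exploiting contractivity'' does not close.

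A minor remark: the assertion that $x$-independent diffusion ``forces $k_1=0$'' is harmless but imprecise; $k_1$ is a constant in a one-sided bound and only becomes irrelevant because $\nabla_xq^{\varepsilon}_{ij}\equiv 0$, so the $\psi_{1,p}$-term vanishes. That part is fine; the real gap is the one above.
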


\begin{proof}
To prove estimates \eqref{tisana} and \eqref{tisana1}, we observe that \cite[Cor. 5.4]{AngLorLun} shows that for every $p>1$ there exists a constant $k_{p,s}>0$ (depending on $p$, $\|q_{ij}^\varepsilon\|_{\infty}$, $L_0$ and $\eta_0$) such that
\begin{equation}\label{scarica}
\|G^{\varepsilon}(t,s)g-m_s^{\varepsilon}(g)\|_{L^p(\Rd,\mu_t^\varepsilon)}\le k_{p,s} e^{-L_0\eta_0(t-s)}\|g\|_{L^p(\Rd,\mu_s^{\varepsilon})},
\end{equation}
for any $t>s\in I$ and $g\in L^p(\Rd,\mu_s^{\varepsilon})$, where $m_s^{\varepsilon}(g)$ denotes the average of $g$ with respect to the measure $\mu_s^{\varepsilon}$. Actually, in \cite{AngLorLun} the case $I=\R$ is considered but the same arguments can be applied in our situation and lead to \eqref{scarica} with
a constant which depends on $s$, and it is independent of $s$ if the diffusion coefficients and $b_i(\cdot,0)$ ($i=1,\ldots,d$) are bounded in the whole of $I$.

We fix $f\in C^{\infty}_c(\Rd_+)$ and write \eqref{scarica} with $g={\mathcal E}f$ and $g={\mathcal O}f$, respectively. Taking \eqref{Dir_eps} and \eqref{Neu_eps} into account and observing that
$m_s^{\varepsilon}({\mathcal E}f)=\langle\tilde\mu_s^{\varepsilon},f\rangle$
and $m_s^{\varepsilon}({\mathcal O}f)=0$, see Lemmas \ref{lemma_complete} and \ref{lemma_4.4}, we get
\begin{align*}
&\|G_{\mathcal N}^{\varepsilon}(t,s)f-m_s^{\varepsilon}({\mathcal E}f)\|_{L^p(\Rd_+,\tilde\mu_t^{\varepsilon})}\leq k_{p,s} e^{-L_0\eta_0(t-s)}\|f\|_{L^p(\Rd_+,\tilde\mu_s^{\varepsilon})},\\
&\|G_{\mathcal D}^{\varepsilon}(t,s)f\|_{L^p(\Rd_+,\tilde\mu_t^{\varepsilon})}\leq k_{p,s} e^{-L_0\eta_0(t-s)}\|f\|_{L^p(\Rd_+,\tilde\mu_s^{\varepsilon})}.
\end{align*}
Letting $\varepsilon\to 0^+$  from Step 2 in the proof of Theorem \ref{existence} and Proposition \ref{prop-4.9}(iii) we get \eqref{tisana} and \eqref{tisana1} for such a function $f$.
A straightforward density argument allows us to extend the previous estimate to any $f\in L^p(\Rd_+,\mu_s^{\mathcal N})$.
\end{proof}

Again, using Proposition \ref{prop-4.9}(iii), we conclude this section extending some results proved in \cite{AngLorLun} to this setting. The following theorem establishes the occurrence of some logarithmic Sobolev inequalities with respect to the tight evolution system of measures $\{\mu_s^\mathcal{N}\}$ and some remarkable properties of the Dirichlet and Neumann evolution operators such as hypercontractivity.

\begin{thm}\label{thm-other-prop}
Assume that the diffusion coefficients $q_{ij}$ are independent of $x$. Then the following properties hold true:
\begin{enumerate}[\rm (i)]
\item
for any $f\in C_c^1(\Rd_+)$, any $p\in (1,+\infty)$ and $s \in I$,
\begin{equation}\label{log-Sob_N}
\;\;\langle \mu_s^{\mathcal{N}},|f|^p\log |f|\rangle \le \frac{1}{p}\langle \mu_s^{\mathcal{N}},|f|^p\rangle\log(\langle \mu_s^{\mathcal{N}},|f|^p\rangle)
+\frac{p \Lambda}{2L_0\eta_0}\langle \mu_s^{\mathcal{N}}, |f|^{p-2}|\nabla f|^2\chi_{\{f\neq 0\}}\rangle,
\end{equation}
where $\Lambda:=\sup\{\langle Q(t)\xi,\xi\rangle:\,t\in I,\xi\in \Rd, |\xi|=1\}$;
\item
for any $s\in I$ and $p\in[2,+\infty)$, the Sobolev space $W^{1,p}(\Rd_+,\mu_s^{\mathcal{N}})$ is compactly embedded in $L^p(\Rd_+,\mu_s^{\mathcal{N}})$. As a consequence, for any $t>s\in I$ and $p\in (1,+\infty)$, $G_{\mathcal{D}}(t,s)$ and $G_{\mathcal{N}}(t,s)$ are compact from $L^p(\Rd_+,\mu_s^{\mathcal{N}})$ into $L^p(\Rd_+,\mu_t^{\mathcal{N}})$;
\item
for any $p,q\in (1,+\infty)$ with $p\leq e^{2L_0\eta_0^2 \Lambda^{-1}(t-s)}(q-1)+1$, $G_{\mathcal{D}}(t,s)$ and $G_{\mathcal{N}}(t,s)$ map $L^q(\Rd_+,\mu_s^{\mathcal{N}})$ to $L^{p}(\Rd_+, \mu_t^{\mathcal{N}})$
for every $t>s$ and, for $\mathcal{J}\in \{\mathcal{D},\mathcal{N}\}$,
\begin{eqnarray*}
\|G_{\mathcal{J}}(t,s)f\|_{L^{p}(\Rd_+,\mu_t^{\mathcal{N}})}\leq \|f\|_{L^q(\Rd_+,\mu_s^{\mathcal{N}})}, \qquad\;\, t>s, \;\, f \in L^q(\Rd_+,\mu_s^{\mathcal{N}}).
\end{eqnarray*}
\end{enumerate}
\end{thm}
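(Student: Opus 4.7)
The plan is to derive all three statements from the corresponding properties of the approximating evolution operator $G^{\varepsilon}(t,s)$ acting on the whole of $\Rd$, with respect to its tight evolution system of measures $\{\mu_t^{\varepsilon}\}$, and to transfer them to $\Rd_+$ via the symmetry relation $\tilde\mu_t^{\varepsilon}=2\rho_{\varepsilon}(t,\cdot)dx$ with $\rho_{\varepsilon}$ even in $x_d$ (Lemma \ref{lemma_complete}) together with the weak$^*$ convergence $\tilde\mu_t^{\varepsilon_n}\rightharpoonup\mu_t^{\mathcal{N}}$ of Lemma \ref{lemma_4.4}. The hypothesis that $q_{ij}$ does not depend on $x$ guarantees $q_{ij}^{\varepsilon}\equiv q_{ij}$, so the quantity $\Lambda^{\varepsilon}:=\sup\{\langle Q^{\varepsilon}(t)\xi,\xi\rangle: t\in I,\,|\xi|=1\}$ coincides with $\Lambda$ uniformly in $\varepsilon$ and the structural assumptions on $\mathcal{A}^{\varepsilon}(t)$ match those on $\mathcal{A}(t)$ with the same constants.

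For part (i), I fix $f\in C_c^1(\Rd_+)$ and observe that $\mathcal{E}f\in C_c^1(\Rd)$, since the support of $f$ stays away from $\partial\Rd_+$. The log-Sobolev inequality for $\mu_s^{\varepsilon}$ proved in \cite{AngLorLun}, applied to $\mathcal{E}f$, reads exactly as \eqref{log-Sob_N} with $\mu_s^{\mathcal{N}}$ and $f$ replaced by $\mu_s^{\varepsilon}$ and $\mathcal{E}f$. Each of the three integrands therein is even in $x_d$, so by the parity of $\rho_{\varepsilon}$ every integral over $\Rd$ against $\mu_s^{\varepsilon}$ equals the corresponding integral over $\Rd_+$ against $\tilde\mu_s^{\varepsilon}$ with $\mathcal{E}f$ replaced by $f$. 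Since each integrand is continuous and bounded on the compact support of $f$, letting $\varepsilon=\varepsilon_n\to 0^+$ along the sequence of Lemma \ref{lemma_4.4} yields \eqref{log-Sob_N}.

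For part (iii), I fix $f\in C_c(\Rd_+)$ and $\mathcal{J}\in\{\mathcal{D},\mathcal{N}\}$, and set $\tilde f:=\mathcal{O}f$ if $\mathcal{J}=\mathcal{D}$ and $\tilde f:=\mathcal{E}f$ if $\mathcal{J}=\mathcal{N}$. The Gross-type derivation from the log-Sobolev inequality, carried out in \cite{AngLorLun} for $G^{\varepsilon}(t,s)$ on $\Rd$, yields $\|G^{\varepsilon}(t,s)\tilde f\|_{L^{p}(\Rd,\mu_t^{\varepsilon})}\le\|\tilde f\|_{L^q(\Rd,\mu_s^{\varepsilon})}$ as soon as $p\le e^{2L_0\eta_0^2\Lambda^{-1}(t-s)}(q-1)+1$. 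By Step 1 of the proof of Theorem \ref{Dir_thm} together with \eqref{Dir_eps}-\eqref{Neu_eps}, the function $G^{\varepsilon}(t,s)\tilde f$ inherits the parity of $\tilde f$ in $x_d$ and its restriction to $\Rd_+$ equals $G_{\mathcal{J}}^{\varepsilon}(t,s)f$; hence $|G^{\varepsilon}(t,s)\tilde f|^{p}$ is even and, combined with the parity of $\rho_{\varepsilon}$, converts the previous inequality into $\|G_{\mathcal{J}}^{\varepsilon}(t,s)f\|_{L^p(\Rd_+,\tilde\mu_t^{\varepsilon})}\le\|f\|_{L^q(\Rd_+,\tilde\mu_s^{\varepsilon})}$. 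The right-hand side passes to the limit along $(\varepsilon_n)$ by Lemma \ref{lemma_4.4}; for the left-hand side, the local uniform convergence $G_{\mathcal{J}}^{\varepsilon_n}(t,s)f\to G_{\mathcal{J}}(t,s)f$ (from the proof of Theorem \ref{existence}), the uniform sup bound $\|G_{\mathcal{J}}^{\varepsilon_n}(t,s)f\|_{\infty}\le\|f\|_{\infty}$ and the uniform tightness of $\{\tilde\mu_t^{\varepsilon_n}\}$ allow the interchange of limit and integral. A density argument then extends the inequality to arbitrary $f\in L^q(\Rd_+,\mu_s^{\mathcal{N}})$.

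For part (ii), the log-Sobolev inequality \eqref{log-Sob_N} combined with the moment bound of Remark \ref{rem-4.7} gives, through the standard Poincar\'e/Rothaus-type argument developed in \cite{AngLorLun}, the compact embedding $W^{1,p}(\Rd_+,\mu_s^{\mathcal{N}})\hookrightarrow L^p(\Rd_+,\mu_s^{\mathcal{N}})$ for every $p\ge 2$. When $p\ge 2$, compactness of $G_{\mathcal{I}}(t,s)$ follows by factoring it as $L^p(\Rd_+,\mu_s^{\mathcal{N}})\to W^{1,p}(\Rd_+,\mu_t^{\mathcal{N}})\hookrightarrow L^p(\Rd_+,\mu_t^{\mathcal{N}})$, the first arrow being bounded by \eqref{nor_p_fun}. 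For $p\in(1,2)$, I split $G_{\mathcal{I}}(t,s)=G_{\mathcal{I}}(t,r)G_{\mathcal{I}}(r,s)$ with $r\in(s,t)$, pick $p'\ge 2$ so that part (iii) makes $G_{\mathcal{I}}(r,s)$ bounded from $L^p(\Rd_+,\mu_s^{\mathcal{N}})$ into $L^{p'}(\Rd_+,\mu_r^{\mathcal{N}})$, and observe that $G_{\mathcal{I}}(t,r)$ is then compact from $L^{p'}(\Rd_+,\mu_r^{\mathcal{N}})$ into $L^{p'}(\Rd_+,\mu_t^{\mathcal{N}})\hookrightarrow L^p(\Rd_+,\mu_t^{\mathcal{N}})$ by the previous case. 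The main obstacle throughout this program is the justification of the interchange of limit and integral on the left-hand side of the $L^p$-type inequalities when the integrands are not compactly supported; this is handled uniformly by combining the sup-norm control coming from the maximum principle with the uniform tightness of $\{\tilde\mu_t^{\varepsilon_n}\}$ granted by Lemma \ref{lemma_4.4}.
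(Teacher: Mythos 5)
Your overall strategy coincides with the paper's: parts (i) and (iii) are obtained by applying the corresponding results of \cite{AngLorLun} to $G^{\varepsilon}(t,s)$ and $\mu_t^{\varepsilon}$ on $\Rd$, transferring to the half-space via the parity of $\rho_{\varepsilon}$ in $x_d$ and \eqref{Dir_eps}--\eqref{Neu_eps}, and then passing to the limit $\varepsilon\to 0^+$ using Lemma \ref{lemma_4.4} (equivalently Proposition \ref{prop-4.9}(iii)), exactly as the paper does for \eqref{fame} and in Theorem \ref{thm-asym}. Part (ii) the paper simply refers to \cite[Thm.~3.4]{AngLorLun}, and your elaboration of the $p\ge 2$ case (compact embedding plus the factorization $L^p\to W^{1,p}\hookrightarrow L^p$ via \eqref{nor_p_fun}) is the standard and expected argument.

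However, there is a genuine gap in your treatment of (ii) for $p\in(1,2)$. You split $G_{\mathcal I}(t,s)=G_{\mathcal I}(t,r)G_{\mathcal I}(r,s)$ and want to ``pick $p'\ge 2$ so that part (iii) makes $G_{\mathcal I}(r,s)$ bounded from $L^p(\mu_s^{\mathcal N})$ into $L^{p'}(\mu_r^{\mathcal N})$.'' But the hypercontractivity constraint in (iii) forces
$p'\le e^{2L_0\eta_0^2\Lambda^{-1}(r-s)}(p-1)+1$,
and the right-hand side tends to $p<2$ as $r\to s^+$. Hence one can choose $p'\ge 2$ only if $r-s$ is large enough, which is impossible when $t-s<\Lambda(2L_0\eta_0^2)^{-1}\log\bigl(\frac{1}{p-1}\bigr)$. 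Since the statement asserts compactness for every $t>s$ and every $p\in(1,+\infty)$, this leaves the case of $p$ near $1$ and $t-s$ small unproven. The standard way to close the gap (and, presumably, the content of the cited reference) is to combine the compactness already established for $p=2$ with the boundedness of $G_{\mathcal I}(t,s)$ from $L^1(\mu_s^{\mathcal N})$ to $L^1(\mu_t^{\mathcal N})$ (Lemma \ref{lemma-jr}) and invoke an interpolation-of-compactness theorem (e.g.~Persson/Krasnosel'skii type), which yields compactness for all $p\in(1,2)$ without any restriction on $t-s$.
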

\begin{proof}
(i) From \cite[Thm. 3.3]{AngLorLun} we know that for any $g\in C_b^1(\Rd)$, any $p\in (1,+\infty)$ and any $s \in I$,
\begin{equation}\label{fame}
\;\;\langle \mu_s^{\varepsilon},|g|^p\log |g|\rangle \le \frac{1}{p}\langle \mu_s^{\varepsilon},|g|^p\rangle\log(\langle \mu_s^{\varepsilon},|g|^p\rangle)
+\frac{p \Lambda}{2L_0\eta_0}\langle \mu_s^{\varepsilon}, |g|^{p-2}|\nabla g|^2\chi_{\{f\neq 0\}}\rangle,
\end{equation}
since $\langle Q^\varepsilon(s)\xi,\xi\rangle\le \Lambda |\xi|^2$ for any $s\in I$ and any $\xi \in \Rd$.

Now, let $f\in C^1_c(\Rd_+)$, writing \eqref{fame} with $g$ being replaced with $\mathcal E f$ and using the symmetry of $\rho^\varepsilon(t,\cdot)$ with respect the last variable, we get
\begin{equation*}
\;\;\langle \tilde\mu_s^{\varepsilon},|f|^p\log |f|\rangle \le \frac{1}{p}\langle \tilde\mu_s^{\varepsilon},|f|^p\rangle\log(\langle \tilde\mu_s^{\varepsilon},|f|^p\rangle)
+\frac{p \Lambda}{2L_0\eta_0}\langle \tilde\mu_s^{\varepsilon}, |f|^{p-2}|\nabla f|^2\chi_{\{f\neq 0\}}\rangle.
\end{equation*}
Hence the claim follows applying Proposition \ref{prop-4.9}(iii).

(ii) Once \eqref{log-Sob_N} is established, the proof follows as in \cite[Thm. 3.4]{AngLorLun}.

(iii) \cite[Thm 4.1]{AngLorLun} yields that if $p,q\in (1,+\infty)$ and $p\leq e^{2L_0\eta_0^2 \Lambda^{-1}(t-s)}(q-1)+1$ then $G^\varepsilon(t,s)$ map $L^q(\Rd,\mu_s^{\varepsilon})$ to $L^{p}(\Rd, \mu_t^{\varepsilon})$
for every $t>s$ and
\begin{equation*}
\|G^\varepsilon(t,s)g\|_{L^{p}(\Rd,\mu_t^{\varepsilon})}\leq \|g\|_{L^q(\Rd,\mu_s^{\varepsilon})}, \qquad\;\, t>s, \;\, g \in L^q(\Rd,\mu_s^{\varepsilon}),
\end{equation*}
Now, arguing as in the proof of Theorem \ref{thm-asym} we get the claim.
\end{proof}

\section{Examples}
\label{sect-5}
In this section, we exhibit two classes of nonautonomous elliptic operators $\A(t)$ to which the main results of this paper apply.

Let $\A(t)$ be defined on smooth functions $\zeta$ by
\begin{align}
(\mathcal{A}(t)\zeta)(x)=&(1+|x|^2)^k{\rm Tr}(B(t,x)D^2\zeta(x))-b_0(t)(1+|x|^2)^m\langle x,\nabla
\zeta(x)\rangle\notag\\
&+g(t,x_d)D_{d}\zeta(x)-\gamma(t)(1+|x|^2)^q\zeta(x),
\label{ex-A}
\end{align}
for any $(t,x)\in I\times\R^{d}_+$, where $I$ is an open halfline (possibly $I=\R$). We assume the following conditions on the coefficients of the operator $\A(t)$.
\begin{hyp}
\label{hyp4}
\begin{enumerate}[\rm (i)]
\item
$k,m,q\in (1,+\infty)$ with $k\le m<q$;
\item
for any $i,j=1,\ldots,d$, $b_{ij}=b_{ji}$ belongs to $C^{\alpha/2,1}_{\rm loc}(I\times\overline{\Rd_+})$. Moreover, the $b_{ij}$'s and their first-order spatial derivatives are bounded in $I\times\Rd_+$, and $b_{id}(\cdot,0)\equiv 0$ $(i=1,\ldots,d-1)$;
\item
there exist a positive constant $\eta_0$ such that
$\langle B(t,x)\xi,\xi\rangle\ge\eta_0 |\xi|^2$,
for any $t\in I$, $x\in\Rd_+$ and $\xi\in\Rd$;
\item
$b_0\in C^{\alpha/2}_{\rm loc}(I)$,
$g\in C^{\alpha/2,1}_{\rm loc}(I\times\overline{\R_+})$, $g(\cdot,0)\equiv 0$, $D_dg(t,x_d)\le \vartheta(t)(1+x_d^2)^m$ for any
$(t,x)\in I\times\R_+$ and some positive function $\vartheta$ such that $b_0(t)-\vartheta(t)\ge\beta_0$ for any $t\in I$;
\item
$\gamma\in C^{\alpha/2}_{\rm loc}(I)\cap C_b(I)$ and there exists a positive constant $\gamma_0$ such that
 $\gamma(t)\ge\gamma_0$ for any $t\in I$.
\end{enumerate}
\end{hyp}
Under such assumptions, Hypotheses \ref{hyp2} are satisfied. Indeed, we can take
\begin{eqnarray*}
r(t,x)=-(b_0(t)\hskip -.5mm - \hskip -.5mm \vartheta(t))(1+|x|^2)^m,\quad\,(t,x)\in I\times\Rd_+,\qquad\;\; L_0=\beta_0\eta_0^{-1},\;\,L_1=0.
\end{eqnarray*}
Moreover, if we set $q_{ij}(t,x)=b_{ij}(t,x)(1+|x|^2)^k$, it holds that
\begin{eqnarray*}
|\nabla_xq_{ij}(t,x)|\le (\|\nabla_xb_{ij}\|_{\infty}+k\|b_{ij}\|_{\infty})(1+|x|^2)^{k},\qquad\;\,(t,x)\in I\times\Rd_+,
\end{eqnarray*}
so that we can take $k_1=\sup_{i,j\le d}(\|\nabla_xb_{ij}\|_{\infty}+k\|b_{ij}\|_{\infty})\eta_0^{-1}$.

Finally, conditions \eqref{cond-deriv-c} are satisfied with
$\beta(t,x)=2q\|\gamma\|_{\infty}(1+|x|^2)^{q-1/2}$, for any $(t,x)\in I\times\Rd$, and $k_2=2q\|\gamma\|_{\infty}\gamma_0^{-1}$.
It thus follows that
\begin{align}
&r(t,x)+\left (\frac{k_1^2d^2}{4M_p}-M_p\right )\eta(t,x)-\left (1-\frac{1}{p}\right )c(t,x)+\frac{pk_2}{4(p-1)}\beta(t,x)\notag\\
\le&-\beta_0 (1+|x|^2)^m+\bigg (\frac{\sup_{i,j\le d}(\|\nabla_xb_{ij}\|_{\infty}+k\|b_{ij}\|_{\infty})^2d^2}{4M_p\eta_0^2}-M_p\bigg )\eta_0(1+|x|^2)^k\notag\\
&-\left (1-\frac{1}{p}\right )\gamma_0(1+|x|^2)^q+\frac{pq^2}{(p-1)\gamma_0}\|\gamma\|_{\infty}^2(1+|x|^2)^{q-1/2}.
\label{esempio-1}
\end{align}
Since $q>m\ge k$, the left-hand side of \eqref{esempio-1} is bounded from above in $I\times\Rd_+$ for any $p\in (1,+\infty)$.
Theorems \ref{Dir_thm}, \ref{thm_3.11} apply and estimate \eqref{GD_GN-5} holds, with the constant $C_p$ being given by the supremum over $\Rd_+$ of the right hand side of \eqref{esempio-1}.

If $\gamma\equiv 0$ and the other conditions in Hypotheses \ref{hyp4} are satisfied, then estimates \eqref{GD_GN_0} and \eqref{nor_p_fun} hold true, respectively with
\begin{align*}
K_p&=\sup_{y\ge 1}\left (-\beta_0y^m+\frac{\sup_{i,j\le d}(\|\nabla_xb_{ij}\|_{\infty}+k\|b_{ij}\|_{\infty})^2d^2}{4\eta_0M_p}y^k\right ),\\[1.5mm]
C_p&=\sup_{y\ge 1}\left [-\beta_0y^m+\left (\frac{\sup_{i,j\le d}(\|\nabla_xb_{ij}\|_{\infty}+k\|b_{ij}\|_{\infty})^2d^2}{4\eta_0M_p}-\eta_0\right )y^k\right ],
\end{align*}
for any $p\in (1,+\infty)$.

Further, if $\sup_{i,j\le d}(\|\nabla_xb_{ij}\|_{\infty}+k\|b_{ij}\|_{\infty})^2d^2<4\eta_0^2(\eta_0+\beta_0)$,
then, the results in Proposition \ref{prop-4.9} hold true with
\begin{eqnarray*}
\sigma_0=-\beta_0+\frac{\sup_{i,j\le d}(\|\nabla_xb_{ij}\|_{\infty}+k\|b_{ij}\|_{\infty})^2d^2}{4\eta_0^2}-\eta_0.
\end{eqnarray*}

Finally, we consider the case when the diffusion coefficients of the operators $\A(t)$ in \eqref{ex-A} are independent of $x$, and the following conditions are satisfied.
\begin{hyp}
\label{hyp5}
\begin{enumerate}[\rm (i)]
\item
$m,q\in\N$ with $2q-1\le m$;
\item
$b_{ij}=b_{ji}$ belongs to $C^{\alpha/2}_{\rm loc}(I)$ for any $i,j=1,\ldots,d$;
\item
$b_{id}(t)=0$ for any $t\in I$ and $i=1,\ldots,d-1$;
\item
there exist a positive constant $\eta_0$ such that
$\langle B(t)\xi,\xi\rangle\ge\eta_0 |\xi|^2$,
for any $t\in I$ and $\xi\in\Rd$;
\item
Hypotheses $\ref{hyp4}${\rm (iv)}-{\rm (v)} are satisfied.
\end{enumerate}
\end{hyp}
In this case estimate \eqref{GD_GN} is satisfied with $p=1$ and $C_1=4q^2\|\gamma\|_{\infty}^2/\beta_0$.

If $\gamma\equiv 0$ and the other conditions in Hypotheses \ref{hyp5} are satisfied, then \eqref{GD_GN_0}, \eqref{tisana} and \eqref{tisana1} hold true and we can take $L_0=\beta_0\eta_0^{-1}$.

\appendix

\section{An auxiliary result}

Here, we prove a result which is used in the proof of Theorem \ref{GD_GN} and provides us with a (local) higher spatial H\"older regularity of the third-order derivatives of the solution of the Cauchy-Neumann problem
\begin{equation}\label{auxiliary}
\left\{
\begin{array}{ll}
u_t(t,x)=\A(t)u(t,x), &t\in (0,T),\,x\in \Om\\[1mm]
\displaystyle{\frac{\partial u}{\partial \nu}(t,x)}=0,&t\in (0,T),\,x\in \partial \Om,\\[2mm]
u(s,x)= f(x),& x\in \overline{\Om},
\end{array}
\right.
\end{equation}
in a bounded domain $\Omega$ of class $C^{2+\alpha}$ (for some $\alpha\in (0,1)$),
without assuming any H\"older regularity in $t$ of the spatial gradient of
the coefficients of the uniformly elliptic operator $\A(t)$, defined on smooth functions $\psi$ by
\begin{eqnarray*}
(\mathcal{A}(t)\psi)(x)= 
{\rm Tr}(Q(t,x)D^2\psi(x))+\langle b(t,x),\rangle \nabla\psi(x)\rangle-c(t,x)\psi(x),
\end{eqnarray*}
for any $t\in [0,T]$ and any $x\in\Omega$.
Even if it seems quite predictable we did not find any reference for this result. Hence, for the sake of completeness we provide a proof of it.

\begin{prop}\label{smoothdatum}
Assume that the coefficients of the operator $\A(t)$ belong to $C^{\alpha/2,\alpha}((0,T)\times\Om)\cap C^{0,1+\alpha}([0,T]\times\Om)$. Then, for any $f\in C^{3+\alpha}(\Om)$
with normal derivative identically vanishing on $\partial\Omega$, problem \eqref{auxiliary} has a unique solution $u \in C^{1+\alpha/2,2+\alpha}((0,T)\times \Om)\cap C^{0,3+\alpha}_{\rm loc}([0,T]\times\Omega)$.
\end{prop}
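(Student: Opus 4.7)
My plan is to split the argument into two stages: first establish existence and uniqueness in $C^{1+\alpha/2,2+\alpha}([0,T]\times\overline{\Omega})$ via classical parabolic Schauder theory, then bootstrap to the interior regularity $C^{0,3+\alpha}_{\rm loc}([0,T]\times\Omega)$ by means of spatial difference quotients.

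For the first stage I would invoke the classical parabolic Schauder theory (Ladyzhenskaya-Solonnikov-Ural'tseva, or Lieberman's monograph): since the coefficients of $\mathcal{A}(t)$ belong to $C^{\alpha/2,\alpha}$, $\partial\Omega\in C^{2+\alpha}$, and $f\in C^{2+\alpha}(\overline{\Omega})$ satisfies the first-order compatibility $\partial f/\partial\nu=0$, problem \eqref{auxiliary} admits a classical solution $u\in C^{1+\alpha/2,2+\alpha}([0,T]\times\overline{\Omega})$, which is unique by the maximum principle.

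For the second stage, fix $\Omega'\subset\subset\Omega''\subset\subset\Omega$ and a direction $e_i$. For $|h|$ sufficiently small, the spatial difference quotient $w_h(t,x):=h^{-1}[u(t,x+he_i)-u(t,x)]$ is defined on $[0,T]\times\overline{\Omega''}$ and a direct computation shows that
\[
(w_h)_t-\mathcal{A}(t)w_h=F_h\qquad\text{in}\;(0,T)\times\Omega'',
\]
where $F_h=\sum_{j,k}(\delta^i_h q_{jk})D_{jk}u(\cdot,\cdot+he_i)+\sum_j(\delta^i_h b_j)D_ju(\cdot,\cdot+he_i)-(\delta^i_h c)u(\cdot,\cdot+he_i)$ with $\delta^i_h\phi(t,x):=h^{-1}[\phi(t,x+he_i)-\phi(t,x)]$. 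The hypothesis that $q_{ij},b_i,c\in C^{0,1+\alpha}$ in space (uniformly in $t$) guarantees that $\delta^i_h q_{jk},\delta^i_h b_j,\delta^i_h c$ are uniformly bounded in $L^{\infty}((0,T);C^{\alpha}(\overline{\Omega''}))$ as $h\to 0$, and combining this with $u\in C^{1+\alpha/2,2+\alpha}$ gives a uniform-in-$h$ bound on $F_h$ in the same space. The crucial step is then a \emph{spatial} Schauder estimate
\[
\sup_{t\in[0,T]}\|w_h(t,\cdot)\|_{C^{2+\alpha}(\overline{\Omega'})}\le K,
\]
with $K$ independent of $h$. Once this is established, Arzel\`a-Ascoli yields, along a subsequence, $w_h\to D_i u$ in $L^{\infty}((0,T);C^{2+\alpha'}(\overline{\Omega'}))$ for every $\alpha'<\alpha$, and lower semicontinuity of the $C^{2+\alpha}$-seminorm upgrades this to $D_i u\in L^{\infty}((0,T);C^{2+\alpha}(\overline{\Omega'}))$. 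Summing over $i$ gives $u\in L^{\infty}_t C^{3+\alpha}_{x,{\rm loc}}$, and the passage from $L^{\infty}_t$ to $C^0_t$ follows from the joint continuity of $u$ on $[0,T]\times\overline{\Omega}$ combined with the uniform spatial $C^{3+\alpha}$-bound, via a standard interpolation/compactness argument.

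The hard part is precisely the spatial Schauder estimate with merely $L^{\infty}$-in-time forcing, which is not fully textbook-standard. I would derive it by a bootstrap: interior parabolic $L^p$-regularity (Calder\'on-Zygmund) first yields $D^2 w_h$ bounded in $L^{\infty}((0,T);L^p(\overline{\Omega'}))$ uniformly in $h$ for every finite $p$; Sobolev embedding then upgrades this to spatial $C^{1+\beta}$-regularity for some $\beta<1$; finally, freezing $t$ and viewing $\mathcal{A}(t)w_h=(w_h)_t-F_h$ as an elliptic equation whose right-hand side is now in $C^{\alpha}(\overline{\Omega''})$ uniformly in $t$ (via the PDE itself together with the bound just obtained), interior elliptic Schauder delivers the desired $C^{2+\alpha}$ spatial estimate.
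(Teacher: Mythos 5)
Your overall architecture matches the paper's: first solvability and the baseline $C^{1+\alpha/2,2+\alpha}$ bound via classical parabolic Schauder theory, then a difference-quotient localisation to bootstrap the spatial regularity, then a compactness/interpolation step to recover joint continuity. You also correctly isolate the crux of the matter: one needs a spatial Schauder estimate for a parabolic equation whose coefficients (and right-hand side $F_h$) are only $L^\infty$ in time with $C^\alpha$ spatial Hölder modulus, uniformly in $h$.

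However, the bootstrap you propose to supply that estimate has a genuine circularity. In your final step you freeze $t$ and treat $\mathcal{A}(t)w_h = (w_h)_t - F_h$ as an elliptic equation, and you claim the right-hand side is in $C^\alpha(\overline{\Omega''})$ uniformly in $t$ and $h$ ``via the PDE itself together with the bound just obtained.'' But the bound just obtained is only $w_h\in L^\infty_t C^{1+\beta}_x$ (from parabolic $L^p$-theory plus Sobolev embedding), which gives no pointwise Hölder control on $D^2w_h$. To bound $(w_h)_t$ in $C^\alpha_x$ via the equation $(w_h)_t = \mathcal{A}(t)w_h + F_h$ you would need $D^2w_h\in C^\alpha_x$ — which is precisely the conclusion you are trying to reach. (A further, lesser, issue: parabolic $L^p$-theory with $L^\infty_t C^\alpha_x$ forcing yields $D^2 w_h\in L^p_{t,x}$ rather than the $L^\infty_t L^p_x$ control you assert.) So the chain $L^p \to C^{1+\beta} \to C^{2+\alpha}$ does not close without a new ingredient. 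The paper avoids this by invoking the Kru\v{z}kov--Castro--Lopes Schauder theorems, which are exactly tailored to coefficients that are merely Hölder in space and only measurable/bounded in time; these results are not deducible from the standard parabolic or elliptic Schauder/$L^p$ package by a simple freezing-in-time argument. Your diagnosis of the difficulty is correct, but the proposed cure is not; to repair the proof you must either cite a Schauder theorem of KCL type or reprove it, which requires a genuinely different argument (e.g., the maximum-principle/iteration techniques of Kru\v{z}kov et al.) rather than a bootstrap through $L^p$.
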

\begin{proof}
By \cite[Thm IV.5.3]{LadSolUra68Lin}, problem \eqref{auxiliary} admits a unique solution $u$ which belongs to $C^{1+\alpha/2,2+\alpha}((0,T)\times\Omega)$ and
there exists a positive constant $C_1$ such that
\begin{equation}
\|u\|_{C^{1+\alpha/2,2+\alpha}([0,T]\times\Omega)}\le C_1\|f\|_{C^{2+\alpha}(\Omega)}.
\label{stima-lady}
\end{equation}
Moreover, we claim that $D_ju$ is $(1+\alpha)/2$-H\"older continuous in $t$, uniformly with respect to $x\in\Rd$, for any $j=1,\ldots,d$. Indeed, writing
\begin{eqnarray*}
u(t,x)-u(s,x)=\int_s^t u_t(\sigma,x)d\sigma,\qquad\;\, t, s\in[s,T],\, x\in \overline{\Om},
\end{eqnarray*}
we can easily show that
$\|u(t,\cdot)-u(s,\cdot)\|_{C^{\alpha}(\Om)}\le \|u_t\|_{C^{0,{\alpha}}([0,T]\times \overline{\Om})}|t-s|$ for any $t,s\in [0,T]$.
Since $C^1(\overline\Omega)$ belongs to the class $J_{(1-\alpha)/2}$ between $C_b^{\alpha}(\Omega)$ and $C^{2+\alpha}_b(\Omega)$, there exists a constant $K$, independent of $s,t$ such that
\begin{align*}
\|u(t,\cdot)-u(s,\cdot)\|_{C^1(\overline{\Om})}\le & K\|u(t,\cdot)-u(s,\cdot)\|_{C^{\alpha}(\Omega)}^{\frac{1+\alpha}{2}}\|u(t,\cdot)-u(s,\cdot)\|_{C^{2+\alpha}(\Omega)}^{\frac{1-\alpha}{2}}\\
\le &2^{\frac{1-\alpha}{2}}KC_1\|f\|_{C^{2+\alpha}(\Omega)}|t-s|^{\frac{1+\alpha}{2}},
\end{align*}
for any $s,t\in [0,T]$, so that the claim follows and
\begin{equation}
\|D_ju\|_{C^{(1+\alpha)/2,0}([0,T]\times\Omega)}\le 2^{(1-\alpha)/2}KC_1\|f\|_{C^{2+\alpha}(\Omega)},\qquad\;\, j=1,\ldots,d.
\label{stima-holder-grad}
\end{equation}

To prove that $u$ admits third-order spatial derivatives in $C^{0,\alpha}_{\rm loc}([0,T]\times \Omega)$, we fix an open set $\Omega'$ compactly contained in $\Omega$, and a function
$\vartheta\in C^{\infty}(\Rd)$ such that $\chi_{\Omega'}\le\vartheta\le\chi_{\Omega''}$ for some open set $\Omega''$ compactly contained in $\Omega$.
Let $v\in C^{1+\alpha/2,2+\alpha}_b([0,T]\times\Rd)$ denote the trivial extension of the function $u\vartheta$ to the whole of $\Rd$. As it is The function $v$ solves the Cauchy problem
\begin{eqnarray*}
\left\{
\begin{array}{ll}
v_t(t,x)=\hat\A(t)v(t,x)+\hat g(t,x), &t\in (0,T),\,x\in \Rd,\\[1mm]
v(s,x)= \vartheta(x)f(x),& x\in\Rd,\\[1mm]
\end{array}
\right.
\end{eqnarray*}
where, with a slight abuse of notation, we still denote by $\vartheta f$ the trivial extension of this function to the whole of $\Rd$. Here,
$\hat\A(t)={\rm Tr}(\hat QD^2)+\langle\hat b,\nabla \rangle-\hat c$, $\hat Q=\eta Q+(1-\eta)I$, $\hat b=\eta b$ and $\hat c=\eta c$, where
$\eta\in C^{\infty}_c(\Omega)$ is a smooth function satisfying $\eta\equiv 1$ in $\Omega''$. Finally,
$\hat g$ denotes the trivial extension to the whole of $[0,T]\times\Rd$ of the function
$-u(\hat\A(t)+\hat c)\vartheta-2\langle \hat Q\nabla\vartheta,\nabla_xu\rangle$.

For any $j\in\{1,\ldots,d\}$, any $h>0$ and any $\psi:[0,T]\times\Rd\to\R$, we denote by $\tau_h\psi$ the function defined by $\tau_h\psi=h^{-1}(\psi(\cdot,\cdot+he_j)-\psi)$. Clearly, $v_h:=\tau_hv$ belongs
to $C^{1+\alpha/2,2+\alpha}_b([0,T]\times\Rd)$ and solves the Cauchy problem
\begin{eqnarray*}
\left\{
\begin{array}{ll}
D_tv_h(t,x)=\hat\A(t)v_h(t,x)+\tau_h\hat g(t,x)+F_h(t,x), &t\in (0,T),\,x\in \Rd,\\[1.5mm]
v_h(s,x)= \tau_h(\vartheta f)(x),& x\in\Rd,
\end{array}
\right.
\end{eqnarray*}
where
\begin{eqnarray*}
F_h=\sum_{i,j=1}^d(\tau_h\hat q_{ij})D_{ij}v(\cdot,\cdot+he_j)+\sum_{j=1}^d(\tau_h\hat b_j)D_jv(\cdot,\cdot+he_j)-(\tau_h\tilde c) v(\cdot,\cdot+he_j).
\end{eqnarray*}
By the results in \cite{KCL, KCL-1, KCL-2} the $C^{0,2+\alpha}_b([0,T]\times\Rd)$-norm of $v_h$ can be estimated from above
by a positive constant, independent of $h$, times the sum of the $C^{2+\alpha}_b(\Rd)$-norm of $\tau_h(\vartheta f)$ and the $C^{0,\alpha}_b([0,T]\times\Rd)$-norm
of $\tau_h\tilde g$ and $F_h$. More precisely, taking \eqref{stima-lady} into account, we can write
\begin{align*}
\|v_h\|_{C^{0,2+\alpha}_b}\le C_2\bigg (&\|\tau_h(\vartheta f)\|_{C^{2+\alpha}_b}+\|\tau_h\tilde g\|_{C^{0,\alpha}_b}+\|f\|_{C^{2+\alpha}_b}\|\tau_h\tilde c\|_{C^{0,\alpha}_b}
\\
&+\|f\|_{C^{2+\alpha}_b}\sum_{i,j=1}^d\|\tau_h\tilde q_{ij}\|_{C^{0,\alpha}_b}+\|f\|_{C^{2+\alpha}_b}\sum_{j=1}^d\|\tau_h\tilde b_j\|_{C^{0,\alpha}_b}
\bigg ),
\end{align*}
with $C_2$ (as all the forthcoming constants) being independent of $h$ and $f$, and where $C^{2+\alpha}_b$ and $C^{0,\alpha}_b$ stand for  $C^{2+\alpha}_b(\Rd)$ and $C^{0,\alpha}_b([0,T]\times\Rd)$, respectively.

Since
\begin{equation}
\tau_h\psi(x)=\int_0^1D_j\psi(x+hse_j)ds,\qquad\;\,x\in\Rd,
\label{form-integrale}
\end{equation}
for any $\psi\in C^1(\Rd)$, it follows that
\begin{align*}
\|v_h\|_{C^{0,2+\alpha}_b}\le C_2\bigg (&\|\vartheta f\|_{C^{3+\alpha}_b}+\|\tilde g\|_{C^{0,1+\alpha}_b}+\|f\|_{C^{2+\alpha}_b}\|\tilde c\|_{C^{0,1+\alpha}_b}
\\
&\;+\|f\|_{C^{2+\alpha}_b}\sum_{i,j=1}^d\|\tilde q_{ij}\|_{C^{0,1+\alpha}_b}+\|f\|_{C^{2+\alpha}_b}\sum_{j=1}^d\|\tilde b_j\|_{C^{0,1+\alpha}_b}
\bigg ).
\end{align*}
The definition of $\tilde g$, estimate \eqref{stima-lady}
and the assumptions on $f$ and the coefficients of the operator $\A(t)$ allow us to conclude that
\begin{equation}
\|v_h\|_{C^{0,2+\alpha}_b}\le C_3\|f\|_{C^{3+\alpha}(\Omega)}.
\label{stima-2+alpha}
\end{equation}

Further, taking advantage of \eqref{stima-holder-grad} and \eqref{form-integrale}, we can easily check that
$v_h$ belongs to $C^{(1+\alpha)/2,0}_b([0,T]\times\Rd)$ and $\|v_h\|_{C^{(1+\alpha)/2,0}_b([0,T]\times\Rd)}$ can be estimated by a constant independent of $h$.
Again, since $C_b^2(\Rd)$ is of class $J_{2/(2+\alpha)}$ between $C_b(\Rd)$ and $C^{2+\alpha}_b(\Rd)$, from \eqref{stima-2+alpha} we deduce that
\begin{equation}
\|v_h(t,\cdot)-v_h(s,\cdot)\|_{C^2_b(\Rd)}\le C_4|t-s|^{\theta},\qquad\;\, s,t\in [0,T],
\label{stima-tempo}
\end{equation}
where $\theta=\alpha(1+\alpha)/(4+2\alpha)$. Combining \eqref{stima-2+alpha} and \eqref{stima-tempo} it follows that
$v_h\in C^{\theta/2,2+\theta}_b([0,T]\times\Rd)$ and
$\|v_h\|_{C^{\theta/2,2+\theta}_b([0,T]\times\Rd)}\le C_5$.
Now, we can use a compactness argument and conclude that $D_jv$ belongs to $C^{0,2+\alpha}_b([0,T]\times\Rd)$.
Recalling that $\vartheta\equiv 1$ in $\Omega'$, we deduce that
$u\in C^{0,3+\alpha}([0,T]\times\Omega')$.
\end{proof}

\end{document}